\definecolor{myblue}{RGB}{150,150,220}
\definecolor{mygreen}{RGB}{80,160,80}
\newcommand{\bR}{\mathbb R}
\newcommand{\bB}{\mathbb B}
\newcommand{\bO}{\mathcal O}
\newcommand{\fL}{\mathfrak M}
\newcommand{\cN}{\mathcal N}
\newcommand{\cM}{\mathcal M}
\newcommand{\bM}{\mathscr{M}}
\newcommand{\sos}{\mathsf{SoS}}
\newcommand{\maxcut}{\textsc{max-cut}\xspace}
\newcommand{\mincut}{\textsc{min-cut}\xspace}
\newcommand{\maxflow}{\textsc{max-flow}\xspace}
\newcommand{\maxstable}{\textsc{max-stable-set}\xspace}
\newcommand{\tsp}{\textsc{TSP}\xspace}
\DeclareMathOperator{\supp}{supp}
\DeclareMathOperator{\ssv}{sgnsupv}
\DeclareMathOperator{\nn}{\mathsf{NNS}}
\DeclareMathOperator{\ppn}{\mathsf{PS}}
\DeclareMathOperator{\snn}{\mathsf{nn}}
\DeclareMathOperator{\spn}{\mathsf{p}}
\DeclareMathOperator{\env}{env}
\DeclareMathOperator{\nnn}{\mathsf{NNS}^+}
\DeclareMathOperator{\nm}{\mathsf{NDS}^+}
\DeclareMathOperator{\ssc}{\mathsf{SSC}}
\DeclareMathOperator{\snm}{\mathsf{SoSC}}
\DeclareMathOperator{\minc}{\mathsf{MC}}
\DeclareMathOperator{\maxf}{\mathsf{MF}}
\newcommand{\deq}{\coloneqq}
\newcommand{\ie}{i.e., }
\newcommand{\eg}{e.g.,~}
\newcommand{\vc}[1]{\bm{#1}}
\newcommand{\ceil}[1]{\lceil#1\rceil}
\newcommand{\vx}{{\vc{x}}}
\newcommand{\vvu}{{\vc{u}}}
\newcommand{\va}{{\vc{\alpha}}}
\newcommand{\vy}{{\vc{y}}}
\newcommand{\vz}{{\vc{0}}}
\newcommand{\veu}{{\vc{1}}}
\providecommand{\vv}{}
\renewcommand{\vv}{{\vc{v}}}
\newcommand{\myvs}{{\vc{s}}}
\newcommand{\vth}{{\vc{\theta}}}
\newcommand{\vd}{{\vc{d}}}
\DeclareMathOperator{\mysl}{\vv_{\mathrm{s}}}
\DeclareMathOperator{\sr}{\vv_{\mathrm{t}}}
\newtheorem{theorem}{Theorem}
\newtheorem{proposition}{Proposition}
\newtheorem{lemma}{Lemma}
\newtheorem{corollary}{Corollary}
\newtheorem{remark}{Remark}
\newtheorem{example}{Example}
\newtheorem{definition}{Definition}
\newcommand{\Halmos}{}
\DeclareMathOperator{\argmin}{argmin}
\newcommand{\orgdiv}[1]{#1}%
\newcommand{\orgname}[1]{#1}%
\newcommand{\orgaddress}[1]{#1}%
\newcommand{\postcode}[1]{#1}%
\newcommand{\city}[1]{#1}%
\newcommand{\country}[1]{#1}%
\newcommand{\fnm}[1]{#1}%
\newcommand{\sur}[1]{#1}%
\Crefname{chapter}{Chap.}{Chaps.}
\Crefname{section}{Sect.}{Sects.}
\Crefname{proposition}{Prop.}{Props.}
\Crefname{theorem}{Thm.}{Thms.}
\Crefname{definition}{Defn.}{Defns.}
\Crefname{corollary}{Cor.}{Cors.}
\Crefname{figure}{Fig.}{Figs.}
\Crefname{observation}{Obs}{Obss.}
\Crefname{Convention}{Conv.}{Convs.}
\Crefname{algocf}{Algorithm}{Algorithms}
\title{Relaxations for binary polynomial
optimization via signed certificates}
\date{\today}
\author{\fnm{Liding} \sur{Xu} \thanks{\orgname{Zuse Institute Berlin}, \orgaddress{\city{Berlin}, \country{Germany}}.
             E-mail: {\tt lidingxu.ac@gmail.com}}
             \and
\fnm{Leo} \sur{Liberti} \thanks{\orgdiv{LIX CNRS}, \orgaddress{\city{Palaiseau}, \postcode{91128}, \country{France}}.
             E-mail: {\tt liberti@lix.polytechnique.fr}}
}
\begin{document}

\maketitle

\begin{abstract}
 We consider the problem of minimizing a polynomial $f$ over the (binary) hypercube. We show that, for a specific set of polynomials, their binary non-negativity (i.e., on the hypercube) can be checked in polynomial time via minimum cut algorithms, from which we construct a linear programming representation for this set. We categorize binary polynomials according to their signed support patterns and develop parameterized linear programming representations of binary non-negative polynomials. This allows the construction of signed binary non-negative certificates with adjustable signed support patterns and representation complexities, and we propose a method for minimizing $f$ by decomposing it as a sum of signed certificates.
 This method yields new hierarchies of linear programming relaxations for binary polynomial optimization. Moreover, since our decomposition depends only on the support of $f$, the new hierarchies are sparsity-preserving.
\end{abstract}


%


\section{Introduction.}

Given an $n$-variate polynomial $f$, the non-negativity problem over a set $S$ asks whether $f$ is non-negative over $S$. We let $S$ be the binary hypercube $\{0,1\}^n$. We call polynomials that are non-negative over this hypercube \emph{binary non-negative polynomials}, and we call the associated decision problem the \emph{binary non-negativity problem} (BNP).

The \textsc{binary polynomial optimization} (BPO) problem $\min\limits_{\vx \in \{0,1\}^n} f(\vx)$ has deep connections with combinatorial optimization (see \citet{chlamtac2012convex,de2012algebraic,lasserre2002polynomials}). The BNP and the BPO can be understood through their dual relationship. The BPO is equivalent to its dual conic optimization problem, which seeks the maximum $\lambda \in \mathbb{R}$ such that $f-\lambda$ is binary non-negative. Since binary non-negative polynomials form a convex cone, the BNP is precisely the feasibility problem of this dual formulation. We remark that linear optimization over the binary non-negative cone is generally intractable, because it would be equivalent to solving the BPO, which is known to contain many $\mathbf{NP}$-hard subclasses.

Given the aforementioned perspective, a common approach is to address the BPO approximately.  Given a conic inner approximation of the binary non-negative cone and an efficient algorithm to optimize over it, the relaxed optimal value over the inner approximation is a lower bound for the optimal value of the BPO. The elements of the inner approximation are called \textit{binary non-negativity certificates} because membership of $f$ (or $f-\lambda$) in the set of these certificates certifies feasibility for the BNP.

The Lasserre and Sherali-Adams hierarchies, for example, are well-known successive convex relaxations of the BPO based on binary non-negativity certificates (see \citet{lasserre2002explicit,sherali1990hierarchy}). The Lasserre hierarchy uses sum-of-squares (SoS) polynomials as binary non-negativity certificates, and relaxations can be solved via semidefinite programming (SDP) (see \citet{lasserre2002explicit,lasserre2002semidefinite,parrilo2003semidefinite,putinar1993positive}). The Sherali-Adams hierarchy uses sum-of-factor-products (SoFP) polynomials as binary non-negativity certificates, and its relaxations can be solved via linear programming (LP) (see \citet{krivine1964anneaux,lasserre2002semidefinite,sherali1992global,sherali1997new}).

The Lasserre hierarchy has been shown to produce tighter relaxations than the Sherali-Adams hierarchy (see \citet{laurent2003comparison}). We note that the size of the smallest Lasserre relaxation increases exponentially with the dimension $n$ and the degree $d$ of $f$: the size of the matrices in the SDP is $\bO\genfrac(){0pt}{2}{n}{d}$. Various  methods that can exploit sparsity (see \citet{waki2006sums,wang2020chordal,wang2021tssos}) and symmetry (see \citet{gatermann2004symmetry}) of SoS polynomials have been proposed to reduce the SDP size.   However, the complexity of these methods still depends on the  degree of  $f$.
 The size of the smallest   Sherali-Adams relaxation also increases exponentially with $n$ and $d$. However, higher-order Sherali-Adams relaxations can be solved in practice, since LP solvers are more efficient than SDP solvers. A tailored reduction method for the  Sherali-Adams hierarchy is based on constraint implication (see \citet{sherali2012reduced}). The sparsity exploitation method proposed by \citet{bienstock2018lp} leverages graph-theoretical tree decomposition, similar to the method for handling sparse SDPs in the Lasserre hierarchy.

To overcome the issue of high-degree polynomials, some approaches develop sparse non-negativity certificates for polynomial optimization. One example is the sum-of-non-negative-circuits (SoNC) polynomials; see  \citet{dressler2017positivstellensatz}.  A circuit polynomial is of the form  $\sum_{\va \in A}g_{\va} \vx^{\va} - g_{\va'} \vx^{\va'} $, where $A$ contains integral vertices of a simplex in $\bR^n$,  $\vx^\va := \prod_{j \in \{1,\dots, n\}}x_j^{\alpha_j}$  with  $g_\va > 0$ for all $\va \in A$,  and $\va'$  lies in the relative interior of this simplex. Circuit polynomials have a simple signed support pattern: at most one  monomial has a negative coefficient, while the other monomials have positive coefficients.  SoNC polynomials have  several convex programming  representations (see \citet{dressler2019approach,ghasemi2012lower,magron2023sonc}).

In this paper, we propose a new class of sparse binary non-negativity certificates for the BPO. We explore the discrete properties of binary polynomials and their signed support patterns, and construct BPO relaxations for arbitrary binary polynomials. Our study is motivated by several observations about polynomial optimization.

First, we observe that,  in the broader setting  of  semi-algebraic optimization (see \citet{lasserre2015introduction,parrilo2020sum}), the non-negativity of SoS, SoNC, and SoFP  polynomials can hold over larger domains than the binary hypercube, such as hypercubes, spheres, and semi-algebraic sets. This view unifies  various non-negative certificates under the framework of semi-algebraic optimization (see \citet{laurent2009sums}). The Sherali-Adams hierarchy is a special case of the Handelman hierarchy (see \citet{handelman1988representing,laurent2014handelman,laurent2023effective}) for general polynomial optimization, as the latter uses more redundant polynomial constraints to construct non-negativity certificates. When restricting to the BPO, the corresponding domain  shrinks to the binary hypercube, and the  Sherali-Adams hierarchy is the same as the Handelman hierarchy. Using the binary property that $x_j^2 = x_j$, one can  convert those non-negative polynomials to multilinear polynomials and use them as binary non-negativity certificates.  The present paper focuses on multilinear polynomials that are non-negative over the binary hypercube, rather than on complicated structures in constrained polynomial optimization.

Second, for a binary polynomial $f$, its signed support vector in $\{-1,0,1\}^{\bB}$ encodes the sign of the coefficient of $\vx^\va$ for each exponent vector $\va \in \{0,1\}^n$. We consider a subclass of binary polynomials with a specific signed support pattern, known as \emph{nonlinearly negatively signed (NNS) polynomials}\footnote{We warn the reader that some of the multi-word entity names in this paper may appear confusing at first. For example, we shall consider ``binary non-negative NNS polynomials''. The second `N' of ``NNS'' stands for ``non-negative'': an adjective that is already part of the name. But the first occurrence states that the polynomial evaluates to non-negative values over arguments consisting of binary vectors, and the second states that all of its monomials with degree $\ge 2$ have a coefficient $<0$. Although such a naming scheme requires some attention, it is standard. Changing it  would end up causing even more confusion.}. NNS polynomials are such that all nonlinear monomials have negative coefficients, while their linear parts can have arbitrary signs. NNS polynomials are submodular functions with nice algorithmic properties (see \citet{crama2011boolean}). We shall also refer to the following related classes: the \textit{negatively signed} (NS), \textit{positively signed} (PS), \textit{nonlinearly positively signed} (NPS), and \textit{nonlinearly differently signed} (NDS) polynomials, whose definitions follow the same sign-pattern convention.

\subsection{Our contribution.}
Our key innovation is a class of new relaxations for BPO based on the following ideas.
We show that the binary non-negativity of an NNS polynomial with $m$ monomials, including the constant and linear monomials, and degree $d$ can be checked in $\bO(m^2d)$ time via Orlin's \maxflow algorithm. This leads to an $\bO(md)$-sized  LP formulation of the cone of binary non-negative NNS polynomials.
For an arbitrary $n$-variate polynomial, we can decompose it as the sum of an NNS polynomial and a PS polynomial. We introduce two methods to construct piecewise linear concave extensions of the PS component.  These methods lead to an LP characterization of the binary non-negativity of an arbitrary polynomial $f$, as well as to the LP reformulation of the BPO for $f$. We further approximate the LP formulation of huge size by smaller relaxations generated from decompositions of $f$ into binary non-negativity certificates with simpler signed support patterns and lower representation complexity. Finally, we construct hierarchies of LP relaxations for the BPO. The standard signed hierarchy converges in at most $n+1$ levels, while the Lovász signed hierarchy converges in at most $\ceil{\log_2 n}+1$ levels. Each level of the relaxations can be solved in time doubly exponential in its level number $i$.

 \subsection{Outline of this paper.}
In the remaining part of this section, we review related work and introduce some notation.  In \Cref{sec.prelinaries}, we give some preliminary notions: submodularity, signed support decomposition, and the classification of binary polynomials.  In \Cref{sec.para}, we construct the polyhedral cone of binary non-negative NNS polynomials, and we study its separation algorithm and extended formulations. In \Cref{sec.concaveextensions}, we study the standard and the Lov\'asz concave extensions (or linearizations) of PS polynomials.  In \Cref{sec.lifting}, we combine the results from the previous two sections to study the BNP and BPO for arbitrary binary polynomials, and we characterize the  conic representation of binary non-negative polynomials. In \Cref{sec.refined}, we introduce the refined signed support decomposition. In \Cref{sec.nest}, we propose two hierarchies of relaxations for BPO based on the two concave extensions of PS polynomials mentioned above.
In \Cref{sec.cresult}, a computational test compares the Sherali-Adams, Lasserre, and our relaxations.  \Cref{sec.conc} concludes the paper.

\subsection{Related work.}
 The book by \citet{magron2023sparse} summarizes several proposals for sparse versions of the Lasserre hierarchy, including correlative sparsity (see \citet{grimm2007note,lasserre2006convergent,waki2006sums}) and term sparsity (see \citet{wang2019new,wang2021tssos}).

In the literature, we find some methods that look for a new parameterization of sparse non-negative polynomials, \eg SoNC polynomials (see \citet{dressler2017positivstellensatz,dressler2019approach}). The non-negativity of a circuit polynomial can be reduced to the non-negativity of a signomial through the exponential transformation (see \citet{murray2021newton}), and the latter problem can be solved by a convex relative entropy program proposed by \citet{chandrasekaran2016relative}.

 Given an $n$-variate degree-$d$ binary polynomial $f$ and a finite family $\{f^i\}$ of binary non-negativity certificates,  one can  usually use a convex program to search for a decomposition of $f$ into the sum of $f^i$.  In the Sherali-Adams hierarchy, since the constraints $0 \le x_j \le 1$ are valid for binary variables, the certificate $f^i$ is expressed as a bound-factor product form  $\prod_{j \in S_-}(1-x_j)\prod_{j  \in S_+}x_j$ (see \citet{handelman1988representing}), where $S_-,S_+$ are disjoint subsets of $\{1,\ldots,n\}$.  The upper bounds on the degree of $f_i$ and the size of the convex program are respectively $n$ and $\bO(2^{n})$ (see \citet{conforti2014integer,laurent2003comparison}). In the SoNC hierarchy, the certificate $f^i$ is a non-negative circuit polynomial. The upper bounds on the degree of $f_i$ and the size of the convex program are respectively  $n$ and $\bO(2^n)$ (see \citet{dressler2022optimization}). In the Lasserre hierarchy, the certificate polynomial $f^i$ is the square of a binary polynomial $g^i$. \citet{lasserre2002explicit} showed that the maximum degree of $g^i$ is bounded by $n$, and \citet{sakaue2017exact} gave a stronger bound by $\ceil{(n+d-1)/2}$. Therefore, the upper bound to the size of the convex program is either $\bO(2^{(n+d-1)})$ or $\bO(2^n)$.  We refer to \citet{laurent2003comparison} and \citet{lasserre2002semidefinite} for a comparison of the Sherali-Adams and Lasserre hierarchies.

Many exact algorithms (see \citet{helmberg1998solving,krislock2014improved,malick2013bridge,rendl2010solving}) for \maxcut problems and binary quadratic programs use convex relaxations and strong valid inequalities, whose properties are studied thoroughly in the book of \citet{deza1997geometry}. \citet{del2017polyhedral} introduced the notion of sparse support patterns for binary polynomials using hypergraphs, enabling a graph-theoretical approach to BPO. Furthermore, \citet{del2023complexity} and \citet{del2023polynomial} gave a polynomial-time approach to optimize beta-acyclic binary polynomials. These findings also suggest that the binary non-negativity of such polynomials can be verified in polynomial time.

There are many studies on the strength of BPO relaxation hierarchies. \citet{gouveia2010theta} studied  SDP-based relaxations for BPO through an SoS perspective.  For instance, the degree-2 SoS gives the relaxation of \citet{goemans1995improved} for \maxcut. \citet{lee2015lower} investigated lower bounds on the size of SDP relaxations for \maxcut, \maxstable, and
\tsp.  \citet{slot2023sum} analysed the error of the Lasserre hierarchy.  Other relaxation hierarchies in integer programming include  the Lovász-Schrijver hierarchy (see \citet{lovasz1991cones}), Balas-Ceria-Cornu{\'e}jols hierarchy  (see \citet{balas1993lift}), and Bienstock-Zuckerberg hierarchy (see \citet{bienstock2004subset}).

The submodularity of a pseudo-Boolean function  is characterized by  its second order derivatives (see \citet{billionnet1985maximizing,nemhauser1978analysis}).
Recognition of submodular/supermodular binary polynomials is  $\mathbf{co}$-$\mathbf{NP}$-complete (see \citet{crama1989recognition,gallo1989supermodular}). However, for submodular/supermodular binary polynomials of degree at most 3, their explicit form is exhibited by \citet{billionnet1985maximizing} and \citet{nemhauser1978analysis}, which makes the recognition problem easy. Another subfamily of submodular binary polynomials that can be recognized is the NNS ones, of interest in this paper. We refer to \citet{boros2002pseudo} and \citet{crama2011boolean} for topics on pseudo-Boolean functions, to \citet{fujishige2005submodular} for submodular function analysis, and to \citet{murota1998discrete} for discrete convex analysis.

\subsection{Notation and symbols}
We follow the standard convention for notation and symbols.
 We refer to \Cref{tab:notation} for descriptions of the symbols used in this paper. In this paper, most vectors are of two different dimensions: i) vectors in $\bB$ or $\bR^n$, \eg vector $\vx$ of variables; ii) vectors in $\{-1,0,1\}^\bB$ or $\bR^\bB$, \eg signed support vector or coefficient vector of a binary polynomial $f$.
\begin{table}[htbp]
  \centering
  \begin{tabular}{cp{0.85\linewidth}}
    \hline
    \textbf{Symbol} & \textbf{Description} \\
    \hline
    $\cN$ & the unordered set $[n] \deq \{1,\ldots,n\}$ \\
    $\bB$ & hypercube $\{0,1\}^n$ representing exponents of $n$-variate binary monomials \\
    $ \vz,\veu,\veu_j$ & all-zero, all-one, and $j$-th unit vectors in $\bB$ \\
    $|\vv|$ & $L_1$ norm of a vector $\vv \in \bB$ ($|\cdot|$ is also used as cardinality of a set) \\
    $z_j, z_J$ &  $j$-th entry of vector $\mathbf{z} \in \bR^{d}$, sub-vector of its entries indexed by $J \subseteq [d]$\\
    $\supp(\mathbf{z})$ & index set (support) of non-zero entries in $\mathbf{z}$\\
    $\bB_{\ell : u}$ & set $\{\va \in \bB:  |\va| \in [\ell, u]\}$ of vectors in $\bB$ with $L_1$ norm in $[\ell, u]$ \\
   $\bR(\vx)$ &  vector space of $n$-variate binary (multilinear) polynomials\\
   $f$ &  shorthand for  the binary polynomial $f(\vx)= \sum_{\va \in \bB} f_{\va} \vx^{\va} \in \bR(\vx)$ \\
   $\text{sgn}(f_\va)$ &  the sign (in $\{-1,0,1\}$) of  $f_\va$ \\
   $\myvs$ & the symbol usually used for signed support vectors in $\{-1,0,1\}^{\bB}$  \\
   $\ssv(f) $ & signed support vector $(\text{sgn}(f_\va) \;|\; \va\in \bB) \in \{-1,0,1\}^{\bB}$ of $f$ \\
   $\ssv(f)_S$ & sub-vector of the signed support vector of $f$ indexed by $S\subseteq\bB$   \\
   $\cN(\myvs)$  & set of var.~indices in all monomials supported by $\myvs \in  \{-1,0,1\}^{\bB}$ (Eq.~\eqref{eq.cns}) \\
   $x^+$  & shorthand for $\max(x,0)$ \\
   NS or PS & polynomial with all coefficients $\le 0$ or $\ge 0$ (\Cref{def.nondich}) \\
   NNS or NPS & same as above but only for coefficients of nonlinear monomials (\Cref{def.nondich}) \\
   NDS & neither an NNS nor an NPS polynomial (\Cref{def.nondich}) \\
   $\myvs' \preceq \myvs$ & partial order on signed support vectors (\Cref{def.porder}) \\
   $\ssc(\myvs)$ & set of polynomials with signed support vector $\preceq\myvs$ \\
   $\nnn(\myvs)$ & NNS
polynomials in  $\ssc(\myvs)$ that are non-negative on $\bB$ (\Cref{def.ns})\\
   $\Gamma(\myvs)$ & upper bound to concave extension complexity (\Cref{s:concext}) \\
   $\nm(\myvs)$ &  NDS polynomials  in  $\ssc(\myvs)$ that are non-negative on $\bB$ (\Cref{defn:nm}) \\
   $\sos^i(n)$ & $n$-variate sum-of-squares polynomials of degree $\le i$ \\
  $\snm(\Theta(\myvs))$ &   the sum of signed certificates (w.r.t. $\Theta(\myvs)$, \Cref{def.cn}) \\
    \hline
  \end{tabular}
    \caption{Meaning of some of the notation most often used in the paper.}
        \label{tab:notation}
\end{table}

\section{Preliminaries.}
\label{sec.prelinaries}
We present necessary definitions and known properties of binary polynomials.
In this paper, we assume that the vector space $\bR(\vx)$ is spanned by binary monomials (\ie monomials in binary variables). As $\bR(\vx)$ is isomorphic to $\bR^{\bB}$, we do not distinguish between the binary polynomials in $\bR(\vx)$ and the vectors in $\bR^{\bB}$. We first present a classification of binary polynomials based on their signed support vectors.

\begin{definition}
\label{def.nondich}
Let  $f \in \bR(\vx)$  be a  binary polynomial. Then,
\begin{itemize}
    \item  $f$ is an affine polynomial (a linear polynomial plus a constant) if $\ssv(f)_{\bB_{2:n}}  = \vz$;
    \item $f$ is  an NS (resp. PS) polynomial if $\ssv(f)  \le \vz$ (resp. $\ssv(f)  \ge \vz$);
    \item  $f$ is an NNS (resp. NPS) polynomial if $\ssv(f)_{\bB_{2:n}}  \le \vz$ (resp. $\ssv(f)_{\bB_{2:n}}  \ge \vz$).
    \item  $f$  is an NDS polynomial, if it is neither an NNS nor an NPS polynomial.
\end{itemize}
\end{definition}
We also extend these definitions to polynomials over $\bR^n$. For example, a circuit polynomial is a sum of a PS polynomial and a negative monomial, thus it is an NDS polynomial.

We next consider special classes of functions defined over $\bB$.  We say that a function $g: \bB  \to \bR$ is a \emph{submodular} (resp. \emph{supermodular}) function if
for every $\vx, \vy\in \bB$, $g(\vx) + g(\vy) \ge g\left ( \vx \lor \vy \right) + g\left(\vx \land \vy\right)$ (resp.  $g(\vx) + g(\vy) \le g\left ( \vx \lor \vy \right) + g\left(\vx \land \vy\right)$), where $\lor$ and $\land$ denote componentwise maximum and minimum, respectively. A \emph{modular function} is both submodular and supermodular. The classification in \Cref{def.nondich} allows for characterizing submodularity/supermodularity of binary polynomials as follows.

 \begin{lemma}\textup{\cite[Thm.~13.21]{crama2011boolean}}
 \label{prop.subchar}
 Every affine polynomial is a modular function. Every NNS (resp.~NPS) binary polynomial is a submodular (resp.~supermodular) function.
\end{lemma}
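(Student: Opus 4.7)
The plan is to use the classical discrete-Hessian characterization of submodularity: a function $g: \bB \to \bR$ is submodular iff for every pair $i \neq j$ in $\cN$ and every $\vx \in \bB$ with $x_i = x_j = 0$,
\[
\Delta_{ij}g(\vx) \deq g(\vx + \veu_i + \veu_j) - g(\vx + \veu_i) - g(\vx + \veu_j) + g(\vx) \le 0,
\]
with the reverse inequality giving supermodularity, and equality for all such $(i,j,\vx)$ giving modularity.

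First, I would apply $\Delta_{ij}$ to an individual monomial $\vx^\va$. The computation splits into two cases. If either $i$ or $j$ lies outside $\supp(\va)$, then $\vx^\va$ does not depend on the corresponding variable and $\Delta_{ij}\vx^\va \equiv 0$. Otherwise $\{i,j\} \subseteq \supp(\va)$, so $|\va| \ge 2$, and three of the four evaluations vanish because at least one of $x_i, x_j$ remains $0$ while the monomial requires that factor to be $1$; only the term evaluated at $\vx + \veu_i + \veu_j$ survives, yielding
\[
\Delta_{ij}\vx^\va = \prod_{k \in \supp(\va)\setminus\{i,j\}} x_k,
\]
which is non-negative on $\bB$.

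Next, by linearity of $\Delta_{ij}$ in its argument, for $f(\vx) = \sum_{\va \in \bB} f_\va \vx^\va$ I would obtain
\[
\Delta_{ij}f(\vx) = \sum_{\substack{\va \in \bB_{2:n} \\ \{i,j\} \subseteq \supp(\va)}} f_\va \prod_{k \in \supp(\va)\setminus\{i,j\}} x_k,
\]
so that the sign of $\Delta_{ij}f$ is controlled entirely by the coefficients of the nonlinear monomials. If $f$ is affine the index set is empty, so $\Delta_{ij}f \equiv 0$ and $f$ is modular. If $f$ is NNS every $f_\va$ in the sum is $\le 0$ while every product is $\ge 0$, so $\Delta_{ij}f(\vx) \le 0$ and $f$ is submodular; the NPS case is symmetric. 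There is no real obstacle: the only structural step is the per-monomial computation of $\Delta_{ij}\vx^\va$, after which linearity and the sign hypothesis on the nonlinear coefficients close the argument, which is why the authors simply cite the textbook.
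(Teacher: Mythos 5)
Your proof is correct: the per-monomial computation of $\Delta_{ij}\vx^\va$ is right, and the sign argument via the discrete second-difference characterization of submodularity is exactly the standard textbook argument behind the result the paper cites (Crama--Hammer, Thm.~13.21) rather than reproving. The paper itself offers no proof, only the citation, so there is nothing further to compare; your write-up simply fills in that standard argument correctly.
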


\citet{nemhauser1978analysis} showed that all submodular binary polynomials are NNS polynomials, when the degree is 2; \citet{billionnet1985maximizing} showed that,  when the degree is 3, some submodular binary polynomials are not NNS polynomials.

 We now introduce the \emph{signed support decomposition} of any binary (particularly NDS) polynomial as the sum of an NNS polynomial and a PS polynomial (see \citet{xu2023}), which is based on the following two transforms in $\bR(\vx)$:
\begin{align*}
    \nn: & \bR(\vx) \to \bR(\vx) \mbox{ mapping } f(\vx) \mapsto \nn(f)(\vx)  \mbox{ defined by } f_{\vz} + \sum_{\va \in \bB_{1:1}} f_\va \vx^{\va} + \sum_{\va \in \bB_{2:n}} \min(f_\va, 0) \vx^{\va}\\
   \ppn:&    \bR(\vx) \to \bR(\vx) \mbox{ mapping } f(\vx) \mapsto  \ppn(f)(\vx) \mbox{ defined by } \sum_{\va \in \bB_{2:n}} \max(f_\va, 0) \vx^{\va},
\end{align*}
where $\nn(f)$ is the NNS component of $f$ and $\ppn(f)$ is the PS component of $f$. Note that $f = \nn(f) + \ppn(f)$, and this  decomposition is unique.

We define a partial order on $\{-1,0,1\}^{\bB}$ in order to exploit the signed support patterns of binary polynomials.
\begin{definition}
\label{def.porder}
  Given $\myvs', \myvs \in \{-1,0,1\}^{\bB}$ we have  $\myvs' \preceq \myvs$ if: (i) for every $\va \in \bB_{0:1}$ we have $|s'_\va| \le |s_\va|$; (ii) for every $\va \in \bB_{2:n}$ we have either $0 \le s'_\va  \le s_\va $ or $0 \ge s'_\va  \ge s_\va $.
\end{definition}

Given $f \in \bR(\vx)$ and $\myvs \in \{-1,0,1\}^{\bB}$, we say that $f$ is signed-supported within $\myvs$ if the \emph{signed support constraint} $\ssv(f) \preceq \myvs$ w.r.t. $\myvs$ holds. The constraint can be expressed entry-wise as:
\begin{equation}
\label{eq.partialrep}
 \ssv(f) \preceq \myvs   \iff  \begin{cases}
  f_\va = 0 \textup{ (zero)} & \textup{for } \va \in \bB_{0:1}\; \mbox{ s.t. } s_\va = 0\\
    f_\va    \in \bR \textup{ (free)}& \textup{for } \va \in \bB_{0:1}\; \mbox{ s.t. } |s_\va| = 1\\
  f_\va = 0  \textup{ (zero)}& \textup{for } \va \in \bB_{2:n}\; \mbox{ s.t. } s_\va = 0 \\
    f_\va  \in \bR_+ \textup{ (non-negative)}& \textup{for } \va \in \bB_{2:n}\; \mbox{ s.t. } s_\va = 1\\
  f_\va  \in \bR_-  \textup{ (non-positive)}& \textup{for } \va \in \bB_{2:n}\; \mbox{ s.t. } s_\va = -1.
  \end{cases}
\end{equation}
This relation implies that the support of $f$ is a subset of the support of $\myvs$, and the signs of nonlinear monomials in $f$ cannot be opposite to those of $\myvs$.
 The constraint defines a convex cone $\ssc(\myvs)\deq \{f \in \bR(\vx): \ssv(f)  \preceq \myvs\}$ of sparse vectors (with $|\myvs|$ non-zero entries). We illustrate the aforementioned definitions with the following example.

\begin{example}
\label{exampl.pre}
Consider $f(\vx) =- x_2 x_3 -2 x_1 x_3 x_4 -5 x_3 x_5 +x_1 x_2  + 2 x_2 x_3 x_4  + 5x_4 x_5 + x_2 + x_3-x_4 + 7$. Then, $\nn(f)(\vx)  = - x_2 x_3 -2 x_1 x_3 x_4 -5 x_3 x_5 + x_2 + x_3-x_4 + 7$ and $\ppn(f)(\vx) = x_1 x_2  + 2 x_2 x_3 x_4  + 5x_4 x_5$. The nonzero entries of the signed support vector $\ssv(f)\in\{-1,0,1\}^{\{0,1\}^5}$ are
\begin{align*}
  \ssv(f)_{(0,1,1,0,0)}&=-1   &\ssv(f)_{(1,0,1,1,0)}&=-1   &\ssv(f)_{(0,0,1,0,1)}&=-1  \\
   \ssv(f)_{(1,1,0,0,0)}&=1 &\ssv(f)_{(0,1,1,1,0)}&=1  &\ssv(f)_{(0,0,0,1,1)}&=1   \\
  \ssv(f)_{(0,1,0,0,0)}&=1     &\ssv(f)_{(0,0,1,0,0)}&=1 &\ssv(f)_{(0,0,0,1,0)}&=-1 \\
    \ssv(f)_{(0,0,0,0,0)}&=1.     && &&
\end{align*}

\end{example}

\section{Binary non-negativity of NNS polynomials.}
\label{sec.para}

In this section, we consider binary non-negative NNS polynomials and study their representations. Previous work by \citet{billionnet1985maximizing,hansen1974lin,picard1982network} shows that one can minimize any binary NNS polynomial in time polynomial in its encoding size. We extend these results by providing a polynomially sized extended formulation of the set of non-negative NNS polynomials having a given signed support pattern.

Throughout this section, we fix a signed support vector $\myvs$ and consider the set of NNS polynomials signed-supported within $\myvs$. This allows for characterizing binary non-negative NNS polynomials through their signed support patterns. Next, we give the mathematical description of those NNS polynomials.

\begin{definition}
\label{def.ns}
Given $\myvs \in \{-1,0,1\}^{\bB}$ such that $s_{\bB_{2:n}} \le 0$, the set of binary non-negative NNS polynomials signed-supported within $\myvs$ is
\begin{equation}
\label{eq.expandcone}
\nnn(\myvs) \deq \{f \in \bR(\vx): f \in \ssc(\myvs) \land\ \forall  \vx \in \bB \; f(\vx) \ge 0\}.
\end{equation}
\end{definition}

We next study the properties of $\nnn(\myvs)$.
The set $\nnn(\myvs)$ is defined by two types of constraints. The first signed support constraint, $\ssv(f) \preceq \myvs$, specifies the signed support pattern of $f$, as in \eqref{eq.partialrep}. The second type of constraint, $\forall  \vx \in \bB, f(\vx) \ge 0$, stipulates the binary non-negativity of $f$. This constraint is called the \emph{non-negativity constraint}. Note that the binary vector $\vx$ in the non-negativity constraint serves  both as an index and as data for its \emph{linear sub-constraints}: $f(\vx) \ge 0$, where $f(\vx)=\sum\limits_{\va \in \bB} \vx^{\va} f_{\va}$ is interpreted as a linear function of the vector $f$.

Let $m\deq|\myvs|$ be the number  of non-zeros in $\myvs$, and let $d \deq \max\limits_{\va \in \supp(\myvs)}|\va|$.  Polynomials in $\nnn(\myvs)$ have at most $m$ monomials and have degree at most $d$. The set $\nnn(\myvs)$ is defined by a system of linear constraints over  \emph{monomial variables} $\{f_{\va}\}_{\va \in \supp(\myvs)}$, and it satisfies the following properties.

\begin{theorem}
 \label{thm.subnn}
For any $\myvs\in\{-1,0,1\}^{\bB}$ such that $\myvs_{\bB_{2:n}}\le \vz$,  $\nnn(\myvs)$ is a convex polyhedral cone with an LP formulation  of $m$ monomial  variables subject to a signed support constraint and a non-negativity constraint. Moreover, every NNS polynomial $f$ signed-supported within $\myvs$ is binary non-negative if and only if $f \in \nnn(\myvs)$.
 \end{theorem}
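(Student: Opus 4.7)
The plan is to read the two defining conditions of $\nnn(\myvs)$ as a homogeneous system of linear (in)equalities in the coefficient vector $(f_\va)_{\va \in \bB}$, and then derive both the cone/LP statement and the ``moreover'' clause from this single representation.

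First, I would expand the signed support constraint $f \preceq \myvs$ exactly as in \eqref{eq.partialrep}. It produces equations $f_\va = 0$ for every $\va$ with $s_\va = 0$, non-positivity constraints $f_\va \le 0$ for $\va \in \bB_{2:n}$ with $s_\va = -1$ (the case $s_\va = 1$ for $\va \in \bB_{2:n}$ is ruled out by the hypothesis $s_{\bB_{2:n}} \le \vz$ of \Cref{def.ns}), and no sign restriction on the ``free'' coordinates $\va \in \bB_{0:1}$ with $|s_\va| = 1$. Eliminating the forced-zero entries leaves exactly the $m = |\myvs|$ variables $\{f_\va\}_{\va \in \supp(\myvs)}$, which serve as the LP decision variables; the surviving sign restrictions play the role of the ``signed support constraint'' in the formulation.

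Next, I would observe that for any fixed $\vx \in \bB$ each $\vx^{\va}$ is a $\{0,1\}$ constant, so
\[
f(\vx) \;=\; \sum_{\va \in \supp(\myvs)} \vx^{\va}\, f_\va
\]
is a linear functional of the $m$ monomial variables with $\{0,1\}$ coefficients. Hence the binary non-negativity condition $\forall \vx \in \bB : f(\vx) \ge 0$ becomes a conjunction of $2^n$ homogeneous linear inequalities, which constitutes the ``non-negativity constraint'' of the formulation. Intersecting it with the signed support block yields a set defined purely by homogeneous linear constraints, hence a polyhedral cone, expressed over the $m$ monomial variables; this is the asserted LP formulation.

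The ``moreover'' clause reduces to definition chasing: any NNS polynomial $f$ with $f \preceq \myvs$ automatically satisfies the signed support block (since $s_{\bB_{2:n}} \le \vz$ matches the NNS sign pattern by construction), so its membership in $\nnn(\myvs)$ is equivalent to binary non-negativity. No genuine technical obstacle arises; the only care point is to keep the equations $f_\va = 0$ (for $s_\va = 0$) explicitly in the formulation, so that the LP cuts out $\nnn(\myvs)$ rather than a larger relaxation.
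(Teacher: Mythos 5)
Your proposal is correct and follows essentially the same route as the paper: read $f \preceq \myvs$ via \eqref{eq.partialrep} to isolate the $m$ monomial variables, note that binary non-negativity is a finite conjunction of homogeneous linear inequalities in those variables, and conclude polyhedrality and the cone property (the paper argues the cone property by scaling $\lambda f$, which is equivalent to your homogeneity observation). The ``moreover'' clause is handled by the same definition chase in both arguments.
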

   \begin{proof}{Proof.}
    The constraint $\ssv(f) \preceq \myvs$ implies that $f$ has at most $m$ nonzero coefficients, while all coefficients outside $\supp(\myvs)$ are fixed to zero. Therefore, $\nnn(\myvs)$ has an LP formulation of $m$ monomial variables, so it is a convex polyhedral set. We know that $\nnn(\myvs)$ is the set of all binary non-negative NNS polynomials signed-supported within $\myvs$. For every $f \in \nnn(\myvs)$ and every positive scalar $\lambda$, $\lambda f$ is binary non-negative.
 \Halmos \end{proof}

\subsection{Separation algorithms for the non-negativity constraint.}
\label{subsec.sepalgo}
The next question is algorithmic: how can we optimize over the cone  $\nnn(\myvs)$? As the number of nontrivial linear sub-constraints equals $|\bB| = 2^n$, we can consider the \emph{separation problem} for $\nnn(\myvs)$, which asks whether all linear sub-constraints are satisfied for any given $f$ signed-supported within $\myvs$.  If not, it outputs the most violated constraint.  Thus, the  BNP for $f$  is equivalent to the separation problem between $\nnn(\myvs)$ and $f$.

We review several separation algorithms and illustrate a reduction approach based on \mincut. In particular, our presentation explicitly maintains the maps between the original and transformed problems, which facilitates the derivation of a novel LP formulation of $\nnn(\myvs)$.
The separation problem can be reduced to finding $\argmin_{\vx \in \bB}f(\vx)$. Throughout this paper, we assume that the coefficients of $f$ are fixed-size rationals.
As indicated by \Cref{prop.subchar}, the NNS  polynomial $f$ is submodular, so the optimization problem is a submodular minimization problem.  Therefore, the separation problem can be addressed by general-purpose submodular minimization algorithms. For example, employing  the algorithm of \citet{orlin2009faster}, one can minimize $f$ in $\bO(n^5md + n^6)$ time, where $\bO(md)$ is the time complexity of evaluating $f$.

 Another tailored method  further reduces the  minimization problem to a \mincut/\maxflow problem and uses the dedicated algorithms (see \citet{billionnet1985maximizing,hansen1974lin,picard1982network}), which have better  time complexities. Our reduction is based on the procedure proposed by \citet{picard1982network}. Let us write
\begin{equation}
\label{eq.sep.f}
    f(\vx) = f_{\vz} +  \sum_{\va \in A} f_{\va} \vx^{\va} +  \sum_{j \in \cN} f_{\veu_j} x_j,
\end{equation}
 where for all $\va \in A$, $|\va| \ge 2$ (\ie $\vx^{\va}$ is a nonlinear monomial). As $f$ is an NNS polynomial, for all $\va \in A$,  $f_{\va}$ is non-positive, and for all $j \in \cN$,  $f_{\veu_j}$ (for brevity, denoted as $f_j$) has an arbitrary sign.

 Then, we present several preprocessing steps for reducing the problem.
 Let
 \begin{equation}
 \label{eq.sep.fa}
     f^a \deq f_{\vz} + \sum_{\va \in A} f_{\va},
 \end{equation} and  let
  \begin{equation}
 \label{eq.sep.fb}
    f^b(\vx) \deq  \sum_{\va \in A} -f_{\va} (1-\vx^{\va}) +  \sum_{j \in \cN} f_j x_j = \sum_{\va \in A} f_{\va} \vx^{\va} +  \sum_{j \in \cN} f_j x_j - \sum_{\va \in A} f_{\va} = f(\vx) - (\sum_{\va \in A} f_{\va} + f_{\vz}).
 \end{equation}
Note that $f(\vx)= f^a+f^b(\vx)$ and  $f^a$ is independent of $\vx$. Therefore, we can reduce the minimization  of  $f$ to the minimization of $f^b$ over $\bB$.

Let $\cN_f \deq \{j \in \cN: f_j  \le 0\}$ denote the indices that will be involved in the next reduction step  for $f$. We will demonstrate that variables $x_j$ indexed by $\cN_f$ in any optimal solution can be set to one without loss of optimality. Additionally, we say that a binary vector $\vx$ is \emph{partly fixed}, if  its entries indexed by $\cN_f$ are ones. Moreover, we call these entries \emph{reducible}. Let $\bB_f$ denote the collection of partly fixed binary vectors within the binary hypercube $\bB$.
Let
 \begin{equation}
     \label{eq.sep.fc}
    f^{c}(\vx) \deq \sum_{\va \in A} -f_{\va} (1-\vx^{\va}) +  \sum_{j \in \cN} f^+_j x_j.
 \end{equation}
It follows that, for all  $\va \in A$, $-f_{\va} \ge 0$, for all $j \in \cN$, $f^+_j \ge 0$, and, in particular, for all $j \in \cN_f$, $f^+_j = 0$. Thus, we find that
\begin{equation}
\label{eq.fbfc}
\begin{aligned}
      f^b(\vx) &= \sum_{\va \in A} -f_{\va} (1-\vx^{\va}) +  \sum_{j \in \cN_f} f_j x_j + \sum_{j \in \cN \smallsetminus \cN_f} f_j x_j \\
      &=  \sum_{\va \in A} -f_{\va} (1-\vx^{\va}) +  \sum_{j \in \cN_f}  (f_j + f^+_j) x_j + \sum_{j \in \cN \smallsetminus \cN_f} f^+_j x_j \\
      &= f^c(\vx) +  \sum_{j \in \cN_f}  f_j x_j.
\end{aligned}
\end{equation}

We have the following observation on the optimal solutions to the minimization problems of the aforementioned reduced polynomials.

 \begin{lemma}
  \label{lem.threef}
$\argmin\limits_{\vx \in \bB_f}f^{c}(\vx) = \argmin\limits_{\vx \in \bB_f}f^{b}(\vx) = \argmin\limits_{\vx \in \bB_f}f(\vx) \subseteq \argmin\limits_{\vx \in \bB}f(\vx)$.
 \end{lemma}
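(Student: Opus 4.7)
The plan is to verify the three (in)equalities separately, each using one structural fact already available in the excerpt.

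First, I would handle the second equality $\argmin_{\vx \in \bB_f} f^b(\vx) = \argmin_{\vx \in \bB_f} f(\vx)$. Since $f = f^a + f^b$ by the definitions \eqref{eq.sep.f}--\eqref{eq.sep.fb}, and $f^a$ as defined in \eqref{eq.sep.fa} is a constant that does not depend on $\vx$, the two objectives differ by a constant on the whole domain $\bB$ (in particular on $\bB_f$), so their argmin sets coincide.

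Next, for the first equality $\argmin_{\vx \in \bB_f} f^c(\vx) = \argmin_{\vx \in \bB_f} f^b(\vx)$, I would invoke \eqref{eq.fbfc}: on $\bB_f$ every $j \in \cN_f$ has $x_j = 1$ fixed, so the term $\sum_{j \in \cN_f} f_j x_j = \sum_{j \in \cN_f} f_j$ is a constant on $\bB_f$. Hence $f^b$ and $f^c$ differ by a constant over $\bB_f$, and they share the same argmin set there.

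The main step, and the only one requiring more than bookkeeping, is the inclusion $\argmin_{\vx \in \bB_f} f(\vx) \subseteq \argmin_{\vx \in \bB} f(\vx)$. It suffices to show $\min_{\vx \in \bB_f} f(\vx) = \min_{\vx \in \bB} f(\vx)$, because $\bB_f \subseteq \bB$. I would do this by a rounding-up argument: given any $\vx \in \bB$, define $\vx' \in \bB_f$ by $x'_j = 1$ for $j \in \cN_f$ and $x'_j = x_j$ otherwise, so that $\vx' \ge \vx$ component-wise. Using \eqref{eq.sep.f},
\[
f(\vx') - f(\vx) = \sum_{\va \in A} f_{\va}\bigl((\vx')^{\va} - \vx^{\va}\bigr) + \sum_{j \in \cN} f_j (x'_j - x_j).
\]
In the first sum, $(\vx')^{\va} \ge \vx^{\va}$ since $\vx' \ge \vx$ and $f$ is NNS (so $f_{\va} \le 0$ for $\va \in A$), making each term nonpositive. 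In the second sum, $x'_j - x_j$ is zero unless $j \in \cN_f$, in which case $x'_j - x_j \ge 0$ and $f_j \le 0$ by the definition of $\cN_f$, so each term is again nonpositive. Hence $f(\vx') \le f(\vx)$, which yields $\min_{\bB_f} f \le \min_{\bB} f$, and the reverse inequality is trivial from the inclusion $\bB_f \subseteq \bB$.

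I do not expect a real obstacle: the NNS sign pattern ($f_{\va} \le 0$ for nonlinear $\va$) and the definition of $\cN_f$ combine exactly so that ``raising'' the non-positively-priced coordinates to $1$ monotonically decreases $f$. The only point to be careful about is expressing the monotonicity of $\vx^{\va}$ in $\vx$ explicitly and noting that components outside $\cN_f$ are untouched so that no positive linear term is accidentally increased.
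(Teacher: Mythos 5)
Your proof is correct and follows essentially the same route as the paper: the two equalities are obtained by observing that the objectives differ by constants (on $\bB$ and on $\bB_f$, respectively), and the inclusion comes from the fact that rounding the coordinates in $\cN_f$ up to one can only decrease $f$ because of the sign conditions. Your rounding-up argument simply makes explicit the step the paper states tersely as ``we can set those reducible entries to ones without loss of optimality.''
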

 \begin{proof}{Proof.}
     Looking at $f$, entries $x_j$ indexed by $\cN_f$ have non-positive coefficients  $f_j$, and coefficients of nonlinear monomials of $f$ are all non-positive as well. Therefore, we can set those reducible entries $x_j$ in any optimal solution for $\min\limits_{\vx \in \bB}f(\vx)$  to one, \ie $\argmin\limits_{\vx \in \bB_f}f(\vx) \subseteq \argmin\limits_{\vx \in \bB}f(\vx)$. As $f^b$ is identical to $f$ (up to a constant), it follows that $\argmin\limits_{\vx \in \bB_f}f(\vx)=\argmin\limits_{\vx \in \bB_f}f^{b}(\vx)$. When restricting the search space to $\bB_f$, all variables $x_j$ indexed by $\cN_f$ are one. Therefore, $f^b(\vx) - f^c(\vx)$ is a constant for all $\vx \in \bB_f$. This implies that $\argmin\limits_{\vx \in \bB_f}f^{c}(\vx) = \argmin\limits_{\vx \in \bB_f}f^{b}(\vx)$ as claimed.
  \Halmos \end{proof}

The above lemma establishes the existence of  partly fixed binary vectors that minimize the functions $f$, $f^b$, and $f^c$ simultaneously.
 We can therefore minimize $f^c$, the linear terms of which have non-negative coefficients. This property is crucial, as \mincut problems usually require non-negative flows and capacities.  Subsequently, throughout our analysis, we exclusively focus on the search space of partly fixed binary vectors. This completes our first preprocessing step.

 We proceed by constructing a capacitated directed flow network  $G^c$, representing the binary polynomial $f^c$. In this network, the node set is  $V \deq A \cup \cN \cup \{\mysl, \sr\}$, where the additional nodes $\mysl$ and $\sr$ correspond to the source and target nodes, respectively. Edges go from $\mysl$ to $A$, from $A$ to $\cN$, and from $\cN$ to $\sr$. For each node $\va \in A$, there exists an edge $(\mysl, \va)$ with a positive capacity  $-f_{\va}$; for every node $\va \in A$ and  $j \in \supp(\va)$, an edge $(\va, j)$ is present with an unlimited capacity (numerically, a sufficiently large number denoted $\bM$);  for each node $j \in \cN$, there is an edge $(j, \sr)$ associated with a non-negative capacity $f^+_j$. In summary, the edge set of $G^c$ is  $E = \{(\mysl, \va)\}_{\va \in A} \cup \{(\va, j)\}_{\va \in A, j \in \supp(\va)} \cup \{(j,\sr)\}_{j \in \cN}$.

We next consider cuts in $G^c$. Any subset $S$ of $V$ defines a cut in $G^c$, whose capacity $C^c(S)$ is the sum of the capacities of edges from $S$ to $\bar{S}\deq V \smallsetminus S$.  We use a binary vector $\vvu \in  \{0,1\}^V$ to label the nodes of $S$: for $\vv \in V$, $\vvu_{\vv} = 1$ if and only if $\vv \in S$. For brevity, we use $\vvu$ to denote cuts, and denote $C^c(S)$ by $C^c(\vvu)$. In the following, we consider the cuts for which $\mysl \in S$ and $\sr \notin S$, \ie we focus on the set $U \deq \{\vvu \in  \{0,1\}^V: u_{\mysl} =1 ,u_{\sr} = 0\}$ of labels. The flow network is shown in \Cref{fig.flow}. We have the following theorem that connects the minimization of $f^c$ over partly fixed binary vectors to the \mincut problem in $G^c$.

\begin{theorem}
    \label{thm.mincut}
    $\min\limits_{\vx \in \bB_f}f^{c}(\vx) = \min\limits_{\vvu \in U}C^c(\vvu)$. In addition, since $m$ counts monomials including the constant and linear monomials, the \mincut problem in $G^c$ can be solved in $\bO(m^2d)$ time using Orlin's \maxflow algorithm.
\end{theorem}
The proof for \Cref{thm.mincut} and details on the separation algorithm are provided in the appendix.

\begin{figure}
    \centering
\begin{tikzpicture}[thick,
  fsnode/.style={fill=myblue,draw,circle},
  ssnode/.style={fill=mygreen,,draw,circle},
  every fit/.style={draw,inner sep=2pt,text width=1.5cm},
  ->,shorten >= 2pt,shorten <= 2pt
]

\begin{scope}[start chain=going below,node distance=4mm]
\node[fsnode,on chain] (f1) [label=left:] {};
\node[on chain] (f2) [label=left: ] {$\vdots$};
\node[fsnode,on chain] (f3) [label=above: {\footnotesize $u_\va = 0, \va \in \bar{S}$}] {\footnotesize $\va$};
\node[on chain] (f4) [label=left: ] {$\vdots$};
\node[fsnode,on chain] (f5) [label=left: ] {};
\end{scope}

\begin{scope}[xshift=6cm, start chain=going below,node distance=5mm]
\node[ssnode,on chain] (s1) [] {};
\node[on chain] (s2) []  {\footnotesize$\vdots$};
\node[ssnode,on chain] (s3) [label=above: {\footnotesize  $ u_{j_1} = 0, j_1 \in \bar{S}$}] {\footnotesize $j_1$};
\node[ssnode,on chain] (s4) [label=above: {\footnotesize  $u_{j_2} = 0, j_2 \in \bar{S}$}] {\footnotesize $j_2$};
\node[ssnode,on chain] (s5) [label=above: {\footnotesize  $ u_{j_3} = 0, j_3 \in \bar{S}$}] {\footnotesize $j_3$};
\node[on chain] (s6) []  {\footnotesize $\vdots$};
\node[ssnode,on chain] (s7) []  {};
\end{scope}

\node [fsnode, left=2cm of f3, label=above: {\footnotesize $u_{\mysl} = 1, S$}] (vs) {$\mysl$};
\node [ssnode, right=5cm of s4, label=above: {\footnotesize $u_{\sr} =0, \bar{S}$}] (vt) {$\sr$};
\node [myblue,fit=(f1) (f5),label=above:$A$] {};
\node [mygreen,fit=(s1) (s7),label=above: $\cN$] {};

\draw (f3) -- node[above]{$\bM$}(s3);
\draw (f3) -- node[above]{$\bM$}(s4);
\draw (f3) -- node[above]{$\bM$}(s5);
\draw (vs)  -- node[above]{\footnotesize $-f_{\va} \ge 0$} (f3);
\draw (s3) -- node[above]{$f^+_{j_1} \ge 0$}(vt);
\draw (s4) -- node[above]{$f^+_{j_2} \ge 0$} (vt);
\draw (s5) -- node[above]{$f^+_{j_3} \ge 0$} (vt);
\draw (vs) -- (f1);
\draw (vs) -- (f5);
\draw (s1) -- (vt);
\draw (s7) -- (vt);
\end{tikzpicture}
\caption{The network $G^c$ derived from $f^c$. Representative nodes are labeled by their membership in $S$ or $\bar{S}$ and by the corresponding value of $u$. The number $\bM$ denotes an unlimited capacity, equivalently enforced by excluding cuts with $u_\va=1$ and $u_j=0$ for some $j\in\supp(\va)$.}
\label{fig.flow}
\end{figure}

\subsection{An extended formulation of the non-negativity constraint.}
The
non-negativity constraint $\min\limits_{\vx \in \bB}f(\vx) \ge 0$  consists of $2^n$ linear sub-constraints. Based on the \mincut separation algorithm, we next present a novel extended formulation of this constraint of size $\bO(md)$. Our reformulation  is based on the LP formulation of \mincut and its duality.

  Recall that the \mincut problem in $G^c$ is equivalent to the reduced problem $\min\limits_{\vx \in \bB_f}f^c(\vx)$, but the non-negativity constraint places restrictions on $f$.  Therefore, we begin with the LP formulations of the reduced problem, and modify its LP formulations to represent the original problem $\min\limits_{\vx \in \bB}f(\vx)$.

First, the standard LP formulation of the \mincut problem $\min\limits_{\vvu \in U}C^c(\vvu)$ is as follows:
\begin{subequations}
\label{flow.lp01primal}
\begin{align}
\minc(G^c) ~\deq ~&\min \quad  & \sum_{\va \in A} (-f_{\va} d_{\mysl\va}) + \sum_{\va \in A \atop j \in \supp(\va)}\bM d_{\va j}  \quad&+\quad \sum_{j \in \cN} f^+_j d_{j \sr}  \label{flow.lp01primal.obj}  \\
\text{s.t.}\; & \forall  \va \in A,  & d_{\mysl\va} -  1 + u_{\va}  \quad &\ge \quad 0  \label{flow.lp01primal.c1}   \\
&\forall \va \in A,   j \in  \supp(\va), & d_{\va j} - u_{\va} + u_j \quad &\ge\quad 0   \label{flow.lp01primal.c2} \\
& \forall  j \in \cN, &  d_{j\sr} - u_j  \quad & \ge  \quad 0  \label{flow.lp01primal.c3} \\
& \forall  \vv \in A \cup \cN, & u_{\vv} \quad & \in  \quad \bR  \label{flow.lp01primal.c5} \\
&\forall  (\vv,\vv') \in E, &   d_{\vv\vv'} \quad & \ge \quad 0,  \label{flow.lp01primal.c4}
\end{align}
\end{subequations}
where $\vvu$ labels the  cuts.

We make a simple modification to the \mincut problem by changing the capacities $f^+_j$ of edges $(j,\sr)$  to $f_j$. This creates a new flow network $G^b$. The  following formulation is adapted from \eqref{flow.lp01primal}:
\begin{subequations}
\label{flow.lp2primal}
\begin{align}
\minc(G^b) ~\deq ~&\min \quad  & \sum_{\va \in A} (-f_{\va} d_{\mysl\va})   + \sum_{j \in \cN} f_j d_{j \sr} &  \label{flow.lp2primal.obj}  \\
\text{s.t.}\; & \forall  \va \in A,  & d_{\mysl\va} -  1 + u_{\va}  \quad &\ge \quad 0  \label{flow.lp2primal.c1}   \\
&\forall \va \in A,   j \in  \supp(\va), &  - u_{\va} + u_j \quad &\ge\quad 0   \label{flow.lp2primal.c2} \\
& \forall  j \in \cN, &  d_{j\sr} - u_j  \quad &\ge \quad 0  \label{flow.lp2primal.c3} \\
& \forall  \vv \in A \cup \cN, & u_{\vv} \quad & \in  \quad  \bR \quad   \label{flow.lp2primal.c5} \\
&\forall  (\vv,\vv') \in E, & 1 \quad \ge \quad  d_{\vv\vv'} \quad & \ge \quad 0  \label{flow.lp2primal.c4}.
\end{align}
\end{subequations}
Since $f_j$ can be negative, we have added upper bounds of 1 on $d_{\vv\vv'}$ to prevent unboundedness of the objective.  In addition, $\bM$ is a large enough number forcing $d_{\va j}$ to zero, so variable $d_{\va j}$ has been eliminated from \eqref{flow.lp2primal.c2}.  We show that \eqref{flow.lp2primal} is a valid formulation for  $\min_{\vx \in \bB}f^b(\vx)$.

 \begin{theorem}
 \label{thm.perfer}
     The optimal value of \eqref{flow.lp2primal} equals $\min_{\vx\in\bB} f^b(\vx)$. Moreover, \eqref{flow.lp2primal} has an optimal solution satisfying $u_j=d_{j\sr}$ for all $j\in\cN$.
 \end{theorem}
 \begin{proof}{Proof.}
 For a fixed $\vd$, there exists $\vvu$ satisfying \eqref{flow.lp2primal.c1}--\eqref{flow.lp2primal.c3} if and only if
 \[
     d_{\mysl\va}-1+d_{j\sr}\ge 0
     \qquad \forall \va\in A,\; j\in\supp(\va).
 \]
 Necessity follows by combining $d_{\mysl\va}-1+u_\va\ge 0$, $u_j\ge u_\va$, and $d_{j\sr}\ge u_j$. For sufficiency, set $u_\va=1-d_{\mysl\va}$ for all $\va\in A$ and $u_j=d_{j\sr}$ for all $j\in\cN$. Hence projecting \eqref{flow.lp2primal} onto the $\vd$ variables gives \eqref{flow.lp3primal} below, and the two formulations have the same objective value because the objective depends only on $\vd$. The constraint matrix of \eqref{flow.lp3primal} is the node-edge incidence matrix of a bipartite graph with node sets $A$ and $\cN$, together with bound constraints. Therefore it is totally unimodular, and \eqref{flow.lp3primal} has an integral optimal solution. For any $\vx\in\bB$, setting $d_{j\sr}=x_j$ and $d_{\mysl\va}=1-\vx^\va$ gives a feasible solution of \eqref{flow.lp3primal} with objective value $f^b(\vx)$. Conversely, for any binary feasible $\vd$, define $x_j=d_{j\sr}$. Since $-f_\va\ge 0$, we may decrease $d_{\mysl\va}$, without increasing the objective, until $d_{\mysl\va}=1-\prod_{j\in\supp(\va)}x_j$. The objective value is then $f^b(\vx)$. Thus the optimal value of \eqref{flow.lp2primal} equals $\min_{\vx\in\bB}f^b(\vx)$. The sufficiency construction above also shows that an optimal solution can be chosen with $u_j=d_{j\sr}$ for all $j\in\cN$.
 \Halmos
 \end{proof}

 Thus, the projection of \eqref{flow.lp2primal} onto the $\vd$ variables is an integral polytope, and its optimal value matches that of $\min_{\vx \in \bB}f^b(\vx)$. Next, we apply the Fourier-Motzkin elimination to aggregate \eqref{flow.lp2primal.c1}, \eqref{flow.lp2primal.c2}, \eqref{flow.lp2primal.c3} such that free $u_{\va}$ and $ u_{j}$ are projected out. Then, \eqref{flow.lp2primal} simplifies to a projected formulation as follows:
  \begin{subequations}
\label{flow.lp3primal}
\begin{align}
\minc(G^b) ~\deq ~&\min \quad  & \sum_{\va \in A} (-f_{\va} d_{\mysl\va}) + \sum_{j \in \cN} f_j d_{j \sr} & \label{flow.lp3primal.obj}  \\
\text{s.t.}\; & \forall  \va \in A, j \in  \supp(\va), & d_{\mysl\va} -  1 +  d_{j\sr} \quad &\ge \quad 0  \label{flow.lp3primal.c1}   \\
&\forall  \va \in A, &  1 \quad \ge \quad d_{\mysl\va}  \quad & \ge \quad 0  \label{flow.lp3primal.c7}\\
&\forall j \in \cN, &   1 \quad \ge \quad d_{j \sr}  \quad & \ge \quad 0  \label{flow.lp3primal.c6}.
\end{align}
\end{subequations}

Given that $-f_{\va}$ is non-negative, the upper bound constraints $1 \ge d_{\mysl\va}$ are redundant and removable.
By adding back the subtracted constant $\sum_{\va \in A} f_{\va} + f_{\vz}$ in \eqref{eq.sep.fb} to $\minc(G^{b})$, we obtain:

  \begin{subequations}
\label{flow.lp4primal}
\begin{align}
\minc(G) ~\deq~  &\min \quad  &\sum_{\va \in A} f_{\va} + f_{\vz} + \sum_{\va \in A} (-f_{\va} d_{\mysl\va}) + \sum_{j \in \cN} f_j d_{j \sr} & \label{flow.lp4primal.obj}  \\
\text{s.t.}\; & \forall  \va \in A, j \in  \supp(\va), & d_{\mysl\va} -  1 +  d_{j\sr} \quad &\ge \quad 0  \label{flow.lp4primal.c1}   \\
&\forall  \va \in A, &   d_{\mysl\va}  \quad & \ge \quad 0  \label{flow.lp4primal.c7}\\
&\forall j \in \cN, &   1 \quad \ge \quad d_{j \sr}  \quad & \ge \quad 0  \label{flow.lp4primal.c6}.
\end{align}
\end{subequations}

The dual problem of \eqref{flow.lp4primal} is as follows:
\begin{subequations}
\label{flow.lp3dual}
\begin{align}
\maxf(G) ~\deq~ &\max \quad  & \sum_{\va \in A} f_{\va} + f_{\vz} +  \sum_{\va \in A} \sum_{j \in \supp(\va)} \rho_{\va j}  - \sum_{ j \in \cN} \beta_j & \label{flow.lp3dual.obj} \\[2mm]
\text{s.t.}\;
& \forall \va \in A,
&   \sum_{j \in \supp(\va)} \rho_{\va j} \quad & \le \quad -f_{\va} & \label{flow.lp3dual.c1} \\
& \forall j \in \cN,
& \sum_{\va \in A: j \in \supp(\va)} \rho_{\va j} - \beta_j \quad &   \le \quad f_{j} & \label{flow.lp3dual.c2} \\
& \forall j \in \cN,
&  \beta_j \quad & \ge \quad 0. & \label{flow.lp3dual.c3} \\
& \forall \va \in A, j \in \supp(\va),
&  \rho_{\va j} \quad & \ge \quad 0. & \label{flow.lp3dual.c4}
\end{align}
\end{subequations}

Enforcing non-negativity on the objective of the dual problem \eqref{flow.lp3dual}, we obtain  the final linear inequality system on the coefficients of $f$:
\begin{equation}
\label{flow.extended}
    \sum_{\va \in A} f_{\va} + f_{\vz} +  \sum_{\va \in A} \sum_{j \in \supp(\va)} \rho_{\va j}   - \sum_{ j \in \cN} \beta_j \ge 0, \quad \eqref{flow.lp3dual.c1}, \eqref{flow.lp3dual.c2}, \eqref{flow.lp3dual.c3}, \eqref{flow.lp3dual.c4}.
\end{equation}
This system defines an extended formulation of the non-negativity constraint for $f$.

\begin{theorem}
\label{thm.extend}
    $\min\limits_{\vx \in \bB}f(\vx) = \minc(G) =   \maxf (G)$. Moreover,  the non-negativity constraint $\min\limits_{\vx \in \bB}f(\vx) \ge 0$ has an  extended formulation  \eqref{flow.extended} of size $\bO(md)$.
\end{theorem}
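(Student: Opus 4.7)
The plan is to transfer the primal--dual pair from \Cref{lem.optequiv}, which deals with the network $G^{c'}$ built from $f^c$, to the perturbed network $G^b$ built from $f^b$, and then read off the extended formulation from the dual LP by combining strong LP duality with the decomposition $f=f^a+f^b$.

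First, I would prove $\min_{\vx\in\bB}f^b(\vx)=\minc'(G^b)$. Since $f=f^a+f^b$ and $f^a$ is constant, \Cref{lem.threef} implies $\min_{\bB}f^b=\min_{\bB_f}f^b$. Identity \eqref{eq.fbfc} then gives $f^b(\vx)=f^c(\vx)+\sum_{j\in\cN_f}f_j$ on $\bB_f$, and chaining \Cref{thm.f2cut} with \Cref{lem.optequiv} yields $\min_{\bB}f^b=\minc'(G^{c'})+\sum_{j\in\cN_f}f_j$. On the network side, passing from $G^{c'}$ to $G^b$ changes only the objective coefficient of each $d_{j\sr}$ from $f_j^+$ to $f_j$; since $f_j=f_j^+$ off $\cN_f$ while any optimal label $\vvu^\ast\in U_f$ has $u_j^\ast=1$ (and $d_{j\sr}^\ast=1$ is optimal) on $\cN_f$, the LP optimum shifts by exactly $\sum_{j\in\cN_f}(f_j-f_j^+)=\sum_{j\in\cN_f}f_j$, giving $\minc'(G^b)=\minc'(G^{c'})+\sum_{j\in\cN_f}f_j=\min_{\bB}f^b$. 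The equality $\minc'(G^b)=\maxf'(G^b)$ then follows from LP strong duality, noting that the perturbation modifies only objective coefficients in the primal and right-hand sides in the dual; the shortcut variable $\rho_{\mysl j}\in\bR$ absorbs the possibly negative value of $\rho_{j\sr}\le f_j$ in the flow-conservation equation at node $j$, so dual feasibility, and hence strong duality, is preserved.

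For the extended formulation, combining the previous step with $\min_{\vx\in\bB}f(\vx)=f^a+\min_{\vx\in\bB}f^b(\vx)=f^a+\maxf'(G^b)$ shows that $\min_{\bB}f\ge 0$ is equivalent to the existence of $\vrh$ feasible for \eqref{flow.lp3dual} whose objective satisfies $\sum_{j\in\cN}\rho_{j\sr}\ge -f^a$. Substituting $f^a=f_{\vz}+\sum_{\va\in A}f_\va$ from \eqref{eq.sep.fa} gives the linking inequality $f_{\vz}+\sum_{\va\in A}f_\va+\sum_{j\in\cN}\rho_{j\sr}\ge 0$ of \eqref{flow.extended}, and the remaining constraints of \eqref{flow.extended} are exactly the dual-feasibility constraints of \eqref{flow.lp3dual}, the trivial $\bM$-bound \eqref{flow.lp1dual.c2} being discarded because it is never active. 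The formulation has one $\rho$-variable per edge of $G^b$, that is $|A|+\sum_{\va\in A}|\supp(\va)|+2n=\bO(md)$ variables, and the number of constraints is of the same order, giving the claimed size.

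The main obstacle I anticipate is verifying the perturbation argument of the first step: one has to check carefully that the primal--dual optimal pair of \Cref{lem.optequiv} remains optimal after the sign change $f_j^+\mapsto f_j$ on the arcs entering $\sr$, which is precisely the purpose served by the shortcut variables introduced in \eqref{flow.lp2primal}--\eqref{flow.lp2dual}, since they are what keep the dual feasible when $\rho_{j\sr}$ must become negative.
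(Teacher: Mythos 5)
Your proposal is correct and follows essentially the same route as the paper: reduce via Lemma~\ref{lem.threef}, identity \eqref{eq.fbfc}, Lemma~\ref{thm.f2cut} and Lemma~\ref{lem.optequiv} to the $G^{c'}$ pair, transfer it to $G^b$ by accounting for the shift $\sum_{j\in\cN_f}f_j$ with the shortcut variables absorbing the perturbation in the dual, and then read \eqref{flow.extended} off the dual together with $f=f^a+f^b$. The only difference is presentational: where you assert that ``the LP optimum shifts by exactly $\sum_{j\in\cN_f}f_j$,'' the paper certifies this by explicitly writing down the perturbed primal solution $(\bar{\vd},\bar{\vvu})=({\vd'}^\ast,\vvu^\ast)$ and the perturbed dual solution $\bar{\rho}_{j\sr}={\rho'}^\ast_{j\sr}+f_j$, $\bar{\rho}_{\mysl j}={\rho'}^\ast_{\mysl j}+f_j$ for $j\in\cN_f$ and checking they have equal objective value $f^b(\vx^\ast)$ --- exactly the verification you flag as the remaining obstacle.
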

\begin{proof}{Proof.}
Note that $f(\vx)=f^b(\vx) + \sum_{\va \in A} f_{\va} + f_{\vz}$.
It follows from the duality between $ \minc(G) $ and $   \maxf(G)$ and \Cref{thm.perfer} that $\min\limits_{\vx \in \bB}f(\vx) = \minc(G) =   \maxf(G)$. Then, the results follow.
  \Halmos
\end{proof}

\begin{remark}
\label{remark1}
We recall another derivation of the proposed extended formulation for $\min_{\vx \in \bB}f(\vx)$ from the continuous extension. Note that
 every binary polynomial admits a continuous multilinear extension obtained by extending its domain to the unit hypercube, and the convex envelopes of the binary polynomial and its extension coincide (Remark~1.3 of \cite{rikun1997convex}).  Consider now the NNS polynomial $f= f_{\vz} +\sum_{\va \in A} f_{\va} \vx^{\va} + \sum_{j \in \cN} f_j x_j$, and denote the convex envelope of $f(\vx)$ (and its continuous extension)  by  $\env(f)(\vx)$. It follows from Example 5 of \cite{meyer2005convex} (or Corollary 3.11 of \cite{tawarmalani2013explicit}) that  $\env(f)(\vx)=f_{\vz} +\sum_{\va \in A} \max_{j \in \supp(\va)} f_{\va} x_j + \sum_{j \in \cN} f_j x_j$, so $\min_{\vx \in \bB}f(\vx) = \min_{\vx \in [0,1]^n}\env(f)(\vx)$.  The envelope then leads to the formulation \eqref{flow.lp4primal}.
\end{remark}

\section{Concave extensions of  PS polynomials.}
\label{sec.concaveextensions}
We have introduced the polyhedral description for the non-negative NNS polynomials, and PS polynomials are the ``negation'' of NNS polynomials. In this section, we parameterize two \emph{piecewise linear concave extensions}  of PS polynomials. Since we take the set of binary monomials as the basis of $\bR(\vx)$, any binary polynomial $f(\vx) \in \bR(\vx)$ is isomorphic to a vector $f \in \bR^{\bB}$, whose entries are coefficients of $f(\vx)$. Similar to the vectorization of circuit polynomials \cite{dressler2017positivstellensatz}, we view binary polynomials as vectors in $\bR^{\bB}$. This leads to our parameterization of the concave extensions as ensembles of linear maps on PS polynomials; \ie the affine components of the concave extensions depend linearly on the coefficients of $f$.

\begin{remark}
The concave (resp. convex) envelopes of supermodular  (resp. submodular) functions already appeared in \cite{wolsey1999integer}. The envelopes have been exploited for various problem types
\cite{atamturk2022submodular,bach2019submodular} in mathematical optimization, especially for continuous multilinear polynomials \cite{nguyen2013deriving,tawarmalani2013explicit}.   In this section, the goal is to show that two types of existing  concave extensions for PS polynomials may have different representation sizes (parameterizations), and the size of one parameterization admits further reduction.
\end{remark}

Every linear map in $\bR(\vx)$ can be represented by a matrix $M \in \bR^{\bB \times \bB}$ that transforms any binary polynomial $f(\vx)=\sum_{\va \in \bB} f_\va x^\va$ to another polynomial $(Mf)(\vx) \deq \sum_{\va' \in \bB}\left(\sum_{\va \in \bB} M[\va',\va]f_\va\right)\vx^{\va'}$, where $M[\va',\va]$ is the $(\va',\va)$-th entry of the matrix $M$.
We consider  linear maps (\ie matrices) that transform  PS polynomials into linear polynomials.

\begin{definition}
\label{def.overtransset}
    Given $\myvs \in \{-1,0,1\}^{\bB}$ such that $s_{\bB_{0:1}} = \vz$ and $s_{\bB_{2:n}} \ge \vz$, a set $\cM(\myvs)$ of matrices in $\bR^{\bB \times \bB}$  is exact w.r.t. $\myvs$, if, for every PS polynomial $f$ signed-supported within $\myvs$ and every $\vx \in \bB$, $\min\limits_{M \in \cM(\myvs)}(Mf)(\vx) = f(\vx)$ holds.
\end{definition}

We call matrices $M$ in  $\cM(\myvs)$ \emph{overestimation} matrices, since, for any  polynomial $f$  signed-supported within $\myvs$, the linear polynomial $Mf$ overestimates $f$ on $\bB$.  We find that $\min\limits_{M \in \cM(\myvs)}(Mf)(\vx)$ is a \emph{piecewise linear concave extension} of $f(\vx)$, because it can recover exactly the evaluation of $f$ on $\bB$. We call $\cM(\myvs)$  \emph{minimal}, if no proper subset of $\cM(\myvs)$ is exact w.r.t. $\myvs$. Note that overestimation matrices are sparse in the sense that their non-zero entries are determined by the signed support patterns of PS  polynomials.

 \subsection{Standard extension.}

The first concave extension, called  \emph{standard extension} (or paved extension) (see \citet{crama1993concave}), is based on the standard linearization method  for binary  monomials (see \citet{glover1973further}). We next express the standard concave extension using overestimation matrices.

Consider the family $\Sigma(\myvs)$ of all maps $\sigma:  \supp(\myvs) \to \cN, \va \mapsto \sigma(\va) =j$ such that $j \in \supp(\va)$ holds. Thus,  each $\sigma \in \Sigma(\myvs)$ defines a linearization of $\vx^\va$ as $\vx^{\veu_{\sigma(\va)}} =x_{\sigma(\va)}=  x_j$. The corresponding overestimation matrix $M_\sigma$ is defined as follows. For every $\va \in \supp(\myvs)$, we set the entries $M_\sigma[\veu_{\sigma(\va)},\va]$ to 1; and set all other entries to 0. Let $\sigma^{-1}(j)$ denote the preimage of $j$ under $\sigma$.
Consequently, we have
 \begin{equation}
 \label{eq.multilinear}
 (M_{\sigma}f)(\vx) = \sum_{j \in \cN} (M_{\sigma} f)_{\veu_j} x_j  =   \sum_{j \in \cN} \big(\sum_{\va \in \sigma^{-1}(j)} f_\va\big) x_j  =  \sum_{\va \in \supp(\myvs)} f_\va x_{\sigma(\va)}.
 \end{equation}

The standard extension of $f$ is defined as $\min\limits_{\sigma \in \Sigma(\myvs)} M_{\sigma}f$, and the associated set of overestimation matrices is exact.

 \begin{proposition}
 For every $\sigma \in \Sigma(\myvs)$, $M_{\sigma}$ is an overestimation matrix w.r.t. $\myvs$. In addition,  $\{M_{\sigma}\}_{\sigma \in \Sigma(\myvs)}$ is exact w.r.t.  $\myvs$.
 \end{proposition}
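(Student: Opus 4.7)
The plan is to establish two facts about each matrix $M_\sigma$: first, that applying it produces a linear overestimate, and second, that for any fixed evaluation point $\vx \in \bB$ one can pick a selector $\sigma$ that recovers $f(\vx)$ exactly.

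First, I would observe from \eqref{eq.multilinear} that $M_\sigma f$ is manifestly linear (it is a sum of the terms $f_\va x_{\sigma(\va)}$ indexed by $\va \in \supp(\myvs)$), so the linearity requirement in \Cref{def.overtrans} is immediate. Then, since $f$ is a PS polynomial within $\myvs$, \eqref{eq.partialrep} gives $f_\va = 0$ for $\va \in \bB_{0:1}$ and $f_\va \ge 0$ for $\va \in \bB_{2:n} \cap \supp(\myvs)$. For any $\vx \in \bB$ and any $\va \in \supp(\myvs)$, because $\sigma(\va) \in \supp(\va)$ and all binary variables lie in $\{0,1\}$, the elementary inequality $\vx^\va = \prod_{j \in \supp(\va)} x_j \le x_{\sigma(\va)}$ holds. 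Multiplying by the non-negative coefficient $f_\va$ and summing over $\va$ yields $(M_\sigma f)(\vx) \ge f(\vx)$, establishing the overestimation property.

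For exactness, I would argue that for each fixed $\vx \in \bB$ there is a variable selector $\sigma_\vx$ achieving equality. Concretely, for each $\va \in \supp(\myvs)$, split into two cases: if $\vx^\va = 1$, then every $x_j$ with $\alpha_j = 1$ equals $1$, so any valid choice of $\sigma_\vx(\va)$ gives $x_{\sigma_\vx(\va)} = 1 = \vx^\va$; if $\vx^\va = 0$, then at least one index $j^\ast \in \supp(\va)$ satisfies $x_{j^\ast} = 0$, so setting $\sigma_\vx(\va) = j^\ast$ yields $x_{\sigma_\vx(\va)} = 0 = \vx^\va$. This construction defines a valid element of $\Sigma(\myvs)$ because the chosen $j^\ast$ satisfies $\alpha_{j^\ast} = 1$, and it forces $x_{\sigma_\vx(\va)} = \vx^\va$ for every $\va \in \supp(\myvs)$. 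Combining with \eqref{eq.multilinear} gives $(M_{\sigma_\vx} f)(\vx) = \sum_\va f_\va \vx^\va = f(\vx)$. Together with the overestimation inequality proved above, this yields $\min_{\sigma \in \Sigma(\myvs)}(M_\sigma f)(\vx) = f(\vx)$, which is the definition of exactness.

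Neither step presents a real obstacle; the only subtle point is verifying that the selector constructed in the exactness step is genuinely a valid variable selector, which reduces to the observation that $\supp(\va) \ne \emptyset$ for $\va \in \supp(\myvs)$ (true since $\myvs$ is supported on $\bB_{2:n}$, and the chosen $j^\ast$ automatically lies in $\cN_\myvs$).
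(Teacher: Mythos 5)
Your proof is correct and follows essentially the same route as the paper's: the overestimation inequality via $\vx^\va = \prod_{j \in \supp(\va)} x_j \le x_{\sigma(\va)}$ for non-negative coefficients, and exactness by choosing, for each fixed $\vx$, the selector $\sigma(\va) = \argmin_{j \in \supp(\va)} x_j$ (your case split on $\vx^\va \in \{0,1\}$ is just an explicit unfolding of that choice). The extra check that the constructed selector is a valid element of $\Sigma(\myvs)$ is a harmless additional detail the paper leaves implicit.
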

  \begin{proof}{Proof.}
Let $f \in \bR(\vx)$ be a PS polynomial signed-supported within $\myvs$. Note that it has no linear terms, and $f_\va \geq 0$ for every $\va \in \supp(\myvs)$. Thus, for every $\vx \in \bB$, $\vx^\va = \prod_{j \in \supp(\va)} x_j  = \min\limits_{j \in \supp(\va)} x_j \le \vx^{\veu_{\sigma(\va)}} = x_{\sigma(\va)}$.  This implies that $f(\vx) = \sum_{\va \in \supp(\myvs)} f_\va \vx^\va  \le \sum_{\va \in \supp(\myvs)} f_\va x_{\sigma(\va)} =  (M_{\sigma}f)(\vx)
$ on $\bB$. Consequently,
 $M_{\sigma}$  is an overestimation matrix w.r.t. $\myvs$. To prove that $\{M_{\sigma}\}_{\sigma \in \Sigma(\myvs)}$  is exact w.r.t. $\myvs$, let $\vx $ be an arbitrary point in $\bB$. Since $\supp(f) \subseteq \supp(\myvs)$, choose $\sigma' \in \Sigma(\myvs)$ such that, for every $\va \in \supp(\myvs)$, $\sigma'(\va) = \argmin_{j \in \supp(\va)} x_j$. Then, for every $\va \in \supp(\myvs)$,
\[
\vx^{\va}=\prod_{j\in\supp(\va)}x_j
=\min_{j\in\supp(\va)}x_j
=x_{\sigma'(\va)}
=\vx^{\veu_{\sigma'(\va)}} ,
\]
which implies $f(\vx) = (M_{\sigma'}f)(\vx)$. Therefore, the result follows.
 \Halmos \end{proof}

\subsection{Lovász extension.}
The second concave extension,  called   \emph{Lovász extension}, is based on the  concave envelope of supermodular functions (see \citet{lovasz1983submodular}).  Treating the PS polynomial as a supermodular function, we can construct its facets for any dimension $n$ explicitly.  We next express the facets as linear maps.

Consider the set $\Pi$ of all permutations on elements of $\cN$. We denote by $\veu_{\pi([j])}$ the sum $\sum_{i \in [j]} \veu_{\pi(i)}$ of unit vectors. Note that $\veu_{{\pi}([j])}$ is in the binary hypercube $\bB$, and the permutation $\pi$ corresponds to a path in $\bB$ from the all-zero vector $\vz$  to the all-one vector $\veu$, where intermediate points  $\veu_{{\pi}([j])}$ follow   $\veu_{{\pi}([j-1])}$ ($j \in [2:n]$) sequentially. Along this path,  we evaluate the supermodular function $f$ at these points. When $x_{\pi(j)}$ increases from 0 to 1, using the finite-difference method, we compute the ``discrete'' partial derivative of $f\big(\veu_{{\pi}([j])}\big) - f\big(\veu_{{\pi}([j-1])}\big)$  w.r.t. the variable $x_{\pi(j)}$. The partial derivative is the sum of some $f_\va$, for which the supports of exponents  $\va$ are subsets of   $\pi([j]) $ and contain $\pi(j)$, and the set of these exponents is denoted as $A^j(\pi) \deq \{\va \in \bB: \pi(j) \in \supp(\va) \subseteq \pi([j]) \}$.

For each $\pi \in \Pi$, we construct a matrix  $M_\pi \in \{0,1\}^{\bB \times \bB}$ as follows.
For every $j \in \cN$ and
$\va \in A^j(\pi)$,  we set $M_\pi[\veu_{\pi(j)},\va]$ to 1; and we set all other entries to zero. Consequently, we have
  \begin{equation}
 \label{eq.suplinear}
 \begin{aligned}
 (M_\pi f)(\vx) &= \sum_{j \in \cN}  (M_\pi f)_{\veu_{\pi(j)}} x_{\pi(j)} = \sum_{j \in \cN}  \big(\sum_{\va \in A^j(\pi)} f_\va\big) x_{\pi(j)}\\
 &=  \sum_{j \in \cN}  \big(f\big(\sum_{ i \in [j]}\veu_{\pi(i)}\big) - f\big(\sum_{ i \in [j-1]}\veu_{\pi(i)}\big)\big) x_{\pi(j)}.
  \end{aligned}
 \end{equation}

 \begin{proposition}
For every $\pi \in  \Pi$, $M_\pi$ is an overestimation matrix w.r.t. $\myvs$. In addition,  $\{M_\pi\}_{\pi \in \Pi}$ is exact w.r.t. $\myvs$.
 \end{proposition}
 \begin{proof}{Proof.}
 It follows from \Cref{prop.subchar} that the PS polynomial $f$ is supermodular.
 According to \citet{xu2023}, any supermodular function has a concave piecewise linear extension. Moreover, each linear piece admits exactly  the form of the last equation in \eqref{eq.suplinear}.  Therefore, $(M_\pi f)(\vx) \ge f(\vx)$ on $\bB$, and $M_\pi$ is an overestimation  matrix w.r.t. $\myvs$. Furthermore, for every $\vx \in \bB$, there exists $\pi' \in \Pi$ such that $ f(\vx) =  \sum_{j \in \cN}  \big(f\big(\sum_{ i \in [j]}\veu_{\pi'(i)}\big) - f\big(\sum_{ i \in [j-1]}\veu_{\pi'(i)}\big)\big) x_{\pi'(j)} = (M_{\pi'} f)(\vx)$; see \citet{xu2023}. Thus, $\{M_\pi\}_{\pi \in \Pi}$ is exact w.r.t. $\myvs$.
 \Halmos \end{proof}

 \subsection{Comparison of concave extensions.}
 \label{s:concext}
 We compare  the sizes of proposed concave extensions, which are the dominant factors contributing to the size of the proposed relaxations for BPO. It follows from \Cref{def.overtransset} that an overestimation matrix has its non-zeros indexed by rows $\{\veu_{j'}\}_{j' \in \cN}$ (corresponding to linear monomials) and columns indexed by $\supp(\myvs)$ (corresponding to monomials of $f$).

 Let $m \deq |\myvs|$ be the number of monomials indexed by $\supp(\myvs)$. Let $d \deq \max_{\va \in \supp(\myvs)}|\va|$ be the maximum degree of PS polynomials signed-supported within $\myvs$. Recall that $n = |\cN|$.  Let $\cM(\myvs)$ be an exact set of overestimation matrices w.r.t $\myvs$ in \Cref{def.overtransset}. Recall that
 the cardinality of $\cM(\myvs)$ is the number of matrices required to extend any  PS polynomial signed-supported within $\myvs$. Therefore, we are primarily interested in the cardinality of the two extension methods. Let $\Gamma(\myvs)$ denote  an upper bound to  the minimum  cardinality of the exact set w.r.t. $\myvs$, which we call the \emph{concave-extension complexity upper bound} w.r.t. $\myvs$.   For the standard extension approach, the cardinality $|\cM(\myvs)|$ is $|\Sigma(\myvs)|=\prod_{\va \in \supp(\myvs)} |\va|$ with an upper bound  $d^{m}$. For the Lovász extension approach, the cardinality $|\cM(\myvs)|$ is $|\Pi|= n!$ with an upper bound $n^{n}$.

  We only need exact concave extensions that consist of the fewest overestimation matrices possible. However, the Lovász extension of a sparse PS polynomial $f$, as its concave envelope, may contain redundant facets.  In degenerate cases, some facets may become the same, \eg facets for affine polynomials.  We remark that the Lovász extension relates every linear overestimator $M_\pi f$ to its \emph{active binary points} $\vx \in \bB$ (in the aforementioned path induced by $\pi$), for which $(M_\pi f)(\vx) = f(\vx)$. We require only one facet to be active at each binary point; however, a single facet can be active at multiple points, which is the source of redundancy.
  As shown in \Cref{exampl.extension}, we can relax the   Lovász extension to reduce the number of overestimation matrices.

\begin{example}
\label{exampl.extension}
Let $f(\vx) = x_1 x_2  + 2 x_2 x_3 x_4  + 5x_4 x_5$ over 5 unknowns. The standard extension creates 2, 3, and 2 linear polynomials  overestimating $x_1 x_2, 2x_2 x_3 x_4$, and $ 5x_4 x_5$, respectively. For example, $5 x_4 x_5$ is overestimated by $5x_4$ and $5x_5$. Each overestimation matrix corresponds to a combination of these polynomials, thus, the number of  these matrices  is $12 = 2 \times 3  \times 2$. The  Lovász extension has $120 = 5!$ facets, regardless of the sparsity of $f$. Then, it needs 120  overestimation matrices.
\end{example}

 We propose a \emph{filtering method} that outputs a relaxed Lovász extension for $f$ with a better bound on $|\cM(\myvs)|$. Starting with $\cM(\myvs) = \varnothing$, the method has the  following steps:
 \begin{enumerate}
     \item the method finds a point  $\vx \in \{0,1\}^{\cN}$ such that  there is no overestimation matrix $M_\pi \in \cM(\myvs)$ satisfying $(M_\pi f)(\vx) = f(\vx)$, if such a point does not exist, then the method terminates;
     \item then, using the exact relation between facets and binary points, the method adds a new overestimator $M_\pi f$ exact at $\vx$ and goes back to step 1.
 \end{enumerate}
  By the pigeonhole principle, the method outputs an exact set of overestimation matrices, whose cardinality is at most $2^{n}$. Finally, the output set is guaranteed to be  \emph{minimal}, \ie there is no proper subset of the output set that results in a concave extension. Otherwise, the filtering method could have removed redundant overestimators.  We obtain the following result on the concave-extension complexity upper bound.

\begin{corollary}\label{prop.filter}
Given  $\myvs \in \{-1,0,1\}^{\bB}$,
      $ \Gamma(\myvs) \le \min(2^{n},  d^{m})$, where $2^{n}$ (resp. $d^{m}$) is achievable by the relaxed Lovász (resp. standard) extension.
\end{corollary}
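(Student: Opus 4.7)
The plan is to establish the two bounds separately and then combine them, observing that each bound is realized by one of the two extension constructions already built, so the minimum cardinality over exact sets cannot exceed either of them.

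For the bound $\Gamma(\myvs) \le d^m$, I would simply invoke the standard extension family $\{M_\sigma\}_{\sigma \in \Sigma(\myvs)}$, which was already shown to be exact with respect to $\myvs$. Its cardinality is $|\Sigma(\myvs)| = \prod_{\va \in \supp(\myvs)}|\va|$, and since each $|\va| \le d$ and there are $m$ monomials in $\supp(\myvs)$, this product is bounded by $d^m$. That immediately provides one feasible exact set of size at most $d^m$, hence the bound $\Gamma(\myvs) \le d^m$.

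For the bound $\Gamma(\myvs) \le 2^{n'}$, I would formalize the filtering procedure described just before the corollary. Fix any PS polynomial $f \preceq \myvs$ and initialize $\cM = \varnothing$. Iterate through each $\vx \in \{0,1\}^{\cN_\myvs}$; if some already-included $M_\pi \in \cM$ satisfies $(M_\pi f)(\vx) = f(\vx)$, skip $\vx$, otherwise choose, via the exactness of $\{M_\pi\}_{\pi \in \Pi(\myvs)}$, a variable ordering $\pi$ for which $(M_\pi f)(\vx) = f(\vx)$ and add $M_\pi$ to $\cM$. Because only points of $\{0,1\}^{\cN_\myvs}$ are scanned and each one adds at most one matrix, the resulting family has cardinality at most $|\{0,1\}^{\cN_\myvs}| = 2^{n'}$. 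Binary points outside this cube differ from a point in the cube only on variables that do not appear in any monomial of $\supp(\myvs)$, so $f$ and each $M_\pi f$ take the same values there as on their projection onto $\{0,1\}^{\cN_\myvs}$; hence exactness on $\{0,1\}^{\cN_\myvs}$ propagates to exactness on all of $\bB$. This gives an exact set of size at most $2^{n'}$, so $\Gamma(\myvs) \le 2^{n'}$.

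Taking the minimum of the two bounds gives $\Gamma(\myvs) \le \min(2^{n'}, d^m)$, and by construction $d^m$ is achieved by the standard extension and $2^{n'}$ by the relaxed Lovász extension produced by the filtering procedure. The step that most deserves care is the filtering argument: one must verify that the selection of $\pi$ at each new point is genuinely available (this uses exactness of the full Lovász family) and that the termination cardinality is controlled by $2^{n'}$ rather than $2^n$, which is where the observation that PS polynomials within $\myvs$ depend only on variables indexed by $\cN_\myvs$ is crucial.
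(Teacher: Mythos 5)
Your proposal matches the paper's own argument essentially verbatim: the $d^m$ bound comes from counting $|\Sigma(\myvs)| = \prod_{\va\in\supp(\myvs)}|\va|$ for the standard extension, and the $2^{n'}$ bound comes from the pigeonhole/filtering argument over the points of $\{0,1\}^{\cN_\myvs}$ applied to the Lovász family, exactly as the paper sketches in the paragraphs preceding the corollary. Your added remark that exactness need only be checked on $\{0,1\}^{\cN_\myvs}$ because PS polynomials within $\myvs$ depend only on the variables indexed by $\cN_\myvs$ is a correct and welcome elaboration of a point the paper leaves implicit.
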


The standard extension is oriented towards the monomials, as it can extend any  PS polynomial with a specified set of monomials. It is particularly useful when  the polynomial is sparse. In principle, one can apply a similar filtering method to remove redundant overestimation matrices for the standard extension. On the other hand,  the relaxed Lovász extension is oriented towards the variables, as it can extend any $n$-variate  PS polynomial. Let the following subset of $\cN$ index  actual variables appearing in the monomials of the PS polynomial:
\begin{equation}
\label{eq.cns}
    \cN(\myvs)\deq\bigcup_{\va \in \supp(\myvs)}  \supp(\va).
\end{equation}

\begin{remark}
The Lovász extension also provides the convex envelope $\env(f)$ for any (submodular) NNS polynomial $f$. \Cref{remark1} shows that the convex envelope of an NNS polynomial coincides with its standard extension. This coincidence also holds for PS polynomials; however, the parameterizations of the two extensions differ, since the Lovász extension depends solely on the polynomial’s values. Moreover, there exists a straightforward way to reduce the complexity of the concave-extension in the Lovász construction.
\end{remark}

\section{BNP and BPO for NDS polynomials.}
\label{sec.lifting}

  In this section, we characterize the set of binary non-negative NDS polynomials, and propose a novel reformulation of BPO for these polynomials. We have shown that a given  polynomial has a unique signed support decomposition and its PS component has   piecewise linear concave extensions. As NNS polynomials can incorporate linear terms, we leverage concave extensions to reformulate BNP for the polynomial into BNPs for multiple NNS polynomials. This leads to the reformulation of BPO for that polynomial. We first present a necessary and sufficient condition  for the  binary non-negativity.

\begin{lemma}\label{lem.nonneg2}
 For every binary polynomial $f \in \bR(\vx)$, let $\myvs' \deq \ssv(\ppn(f))$, and let $\cM(\myvs')$ be an exact set of overestimation matrices w.r.t. $\myvs'$. Then $f$ is binary non-negative if and only if for every $M \in \cM(\myvs')$,  the NNS polynomial $ \nn(f) + M\ppn(f)$ is binary non-negative.
\end{lemma}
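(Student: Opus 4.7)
The plan is to exploit the signed support decomposition $f = \nn(f) + \ppn(f)$ together with the defining property of $\cM$, namely that $M\ppn(f)$ is a linear polynomial overestimating $\ppn(f)$ on $\bB$ and that $\min_{M \in \cM}(M\ppn(f))(\vx) = \ppn(f)(\vx)$ for every $\vx \in \bB$.

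First I would verify the implicit typing claim in the statement: for each $M \in \cM$, the polynomial $\nn(f) + M\ppn(f)$ really is NNS. This is immediate from the decomposition, since $M\ppn(f)$ is a linear polynomial (by \Cref{def.overtrans}) and $\nn(f)$ already has non-positive coefficients on all nonlinear monomials; hence the sum has non-positive coefficients on every monomial of degree at least two, and arbitrary signs on its affine part. This is a one-line observation but worth recording, because it explains why the statement is well-posed.

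For the forward direction, assume $f$ is binary non-negative. Fix $M \in \cM$ and $\vx \in \bB$. Using $M\ppn(f) \ge \ppn(f)$ pointwise on $\bB$, I would write
\begin{equation*}
(\nn(f) + M\ppn(f))(\vx) \;\ge\; \nn(f)(\vx) + \ppn(f)(\vx) \;=\; f(\vx) \;\ge\; 0,
\end{equation*}
which gives binary non-negativity of $\nn(f) + M\ppn(f)$.

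For the reverse direction, suppose every $\nn(f) + M\ppn(f)$ is binary non-negative. Given any $\vx \in \bB$, exactness of $\cM$ lets me choose $M^\ast \in \cM$ with $(M^\ast \ppn(f))(\vx) = \ppn(f)(\vx)$, and then
\begin{equation*}
f(\vx) \;=\; \nn(f)(\vx) + \ppn(f)(\vx) \;=\; (\nn(f) + M^\ast \ppn(f))(\vx) \;\ge\; 0.
\end{equation*}
Since $\vx \in \bB$ was arbitrary, $f$ is binary non-negative.

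There is no real obstacle; the statement is essentially a packaging of \Cref{def.overtrans} and \Cref{def.overtransset} through the signed support decomposition. The only pitfall is remembering to check that $M\ppn(f)$ is genuinely linear (hence $\nn(f) + M\ppn(f)$ is NNS) so that the hypothesis on the right-hand side of the equivalence refers to objects that the previous sections can certify.
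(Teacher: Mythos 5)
Your proof is correct and follows essentially the same route as the paper: the paper packages both directions into the single identity $f = \min_{M \in \cM}(\nn(f) + M\ppn(f))$ on $\bB$, which is exactly your overestimation inequality (forward direction) plus the exactness of $\cM$ (reverse direction), and it likewise begins by noting that each $\nn(f)+M\ppn(f)$ is NNS because $M\ppn(f)$ is linear. Unpacking the equivalence into the two explicit implications, as you do, is just a more verbose rendering of the same argument.
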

\begin{proof}{Proof.}
Let $M \in \cM(\myvs')$.
Since $\ppn(f)$ is  NPS and $M$ is an overestimation matrix w.r.t. $\myvs'$, $M\ppn(f)$ is a linear polynomial. Since $ \nn(f)$ is NNS, $ \nn(f)+ M \ppn(f)$  is NNS as well. The exactness of $\cM(\myvs')$ means that  $\ppn(f)(\vx) = \min\limits_{M \in \cM(\myvs')} (M\ppn(f))(\vx)$ for every $\vx \in \bB$. It follows that $f$ equals $ \min\limits_{M \in \cM(\myvs')}  (\nn(f) + M\ppn(f))$  on  $\bB$. Thus, the binary non-negativity condition $\min\limits_{\vx \in \bB}f(\vx) \ge 0$ holds, if and only if, for every $M \in \cM(\myvs')$,  the NNS polynomial $ \nn(f) + M\ppn(f)$ is binary non-negative.
\Halmos \end{proof}

We note that the NNS polynomial $\nn(f) + M\ppn(f)$ is an overestimator for the binary polynomial $f$ on $\bB$. Moreover, the BNP for $f$ is equivalent to the BNPs for all its NNS overestimating polynomials induced by $\cM(\myvs')$.   We also have a signed support decomposition of signed support vectors,  as a counterpart to the decomposition of binary polynomials.

\begin{definition}
\label{assm}
Given  $\myvs \in \{-1,0,1\}^{\bB}$ with $\bB_{0:1} \subseteq \supp(\myvs)$, its signed support decomposition $\myvs = \myvs^{\snn} + \myvs^{\spn}$ satisfies the following: $\myvs^{\snn}, \myvs^{\spn} \in  \{-1,0,1\}^{\bB}, s^{\snn}_{\bB_{2:n}} \le \vz, s^{\spn}_{\bB_{2:n}} \ge \vz, s^{\spn}_{\bB_{0:1}} = \vz$, and $ \supp(\myvs^{\snn}) \cap \supp(\myvs^{\spn}) = \varnothing$.
The derived parameters of $\myvs^i$ ($i \in \{\snn,\spn\}$) are  $m_i\deq|\myvs^i|, d_i\deq \max_{\va \in \supp(\myvs^i) }|\va|$, $n_i\deq|\cN(\myvs^i)|$. The derived parameters of $\myvs$ include those of $\myvs^{\snn},\myvs^{\spn}$ and $m\deq m_{{\snn}} +m_{{\spn}}$.
\end{definition}

In  the above definition, if the binary polynomial $f$ is signed-supported within $\myvs$, then it is an NDS polynomial. The signed support vectors $\myvs^{\snn},\myvs^{\spn}$ correspond to  $\nn(f),\ppn(f)$, respectively, \ie $\ssv(\nn(f)) \preceq \myvs^{\snn}$ and $\ssv(\ppn(f)) \preceq \myvs^{\spn}$.  More precisely, this implies that $\nn(f)$ is an $n_{{\snn}}$-variate ($n_{{\snn}} = n$) $d_{{\snn}}$-degree binary polynomial with $m_{{\snn}}$ monomials, and $\ppn(f) $ is an $n_{{\spn}}$-variate $d_{{\spn}}$-degree binary polynomial with $m_{{\spn}}$ monomials. We will use these parameters to quantify the complexities of  BNP and BPO for $f$. We next give the formulation of its BNP.
\begin{definition}
\label{defn:nm}
Given  $\myvs \in \{-1,0,1\}^{\bB}$ with $\bB_{0:1} \subseteq \supp(\myvs)$, let $\cM(\myvs^{\spn})$ be an exact  set of overestimation matrices w.r.t. $\myvs^{\spn}$. The set $\nm(\myvs)$ of binary non-negative NDS polynomials w.r.t. $\myvs$ is
\begin{multline}
    \label{eq.nm}
      \{f \in \bR(\vx):
\nn(f) \in \ssc(\myvs^{\snn}) \land \; \ppn(f) \in \ssc(\myvs^{\spn}) \land \\ \forall M \in \cM(\myvs^{\spn})   \; \nn(f) + M\ppn(f) \in \nnn(\myvs^{\snn})  \}.
\end{multline}

\end{definition}

The above definition explains the necessity of the condition $\bB_{0:1} \subseteq \supp(\myvs)$ in \Cref{assm}. This condition ensures that, after applying an  overestimation matrix $M$, the inclusions $\supp(M\ppn(f)) \subseteq \bB_{0:1} \subseteq \supp(\myvs^{\snn})$ hold. Thus, $\ssv(\nn(f) + M\ppn(f)) \preceq \myvs^{\snn}$, and \eqref{eq.nm} is well-defined.
 We  observe that  \eqref{eq.nm}
 has an alternative extended representation:
\begin{equation}
\label{eq.nmalter}
    \exists f^1,f^2 \in \bR(\vx)\; f^1 \in \ssc(\myvs^{\snn}) \land\ f^2 \in \ssc(\myvs^{\spn}) \land\ f = f^1 + f^2 \land
    \forall M \in \cM(\myvs^{\spn}) \; f^1 + Mf^2 \in \nnn(\myvs^{\snn}),
\end{equation}
where the signed support constraints can be translated into simple constraints on the coefficients of $f$.
The above system has the following properties.

\begin{theorem}\label{thm.nn}
$\nm(\myvs)$ is a convex polyhedral cone, and it has an extended LP formulation of $m$ monomial variables subject to a signed support constraint and  $\bO(\Gamma(\myvs^{\spn}))$  non-negativity constraints. Moreover, every polynomial $f$ signed-supported within $\myvs$ is binary non-negative  if and only if  $ f \in \nm(\myvs)$.
\end{theorem}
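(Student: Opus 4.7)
The plan is to prove the three claims of \Cref{thm.nn} by assembling the results from \Cref{sec.para,sec.concaveextensions} with \Cref{lem.nonneg2}. I would argue them in the order: convex polyhedral cone structure, LP size, and then the biconditional characterization of binary non-negativity.

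For the polyhedral cone structure, I would observe that every defining condition of $\nm(\myvs,\cM)$ in \eqref{eq.nm} is linear in the coefficient vector of $f$. Namely, $\nn$ and $\ppn$ are coordinate projections onto $\bB_{0:n}$ with sign-truncation and so, once combined with the signed support constraints $\nn(f)\preceq \myvs^1$ and $\ppn(f)\preceq \myvs^2$, act as \emph{linear} maps on the cone defined by \eqref{eq.partialrep}. Each matrix $M\in\cM$ is linear by definition (\Cref{def.overtrans}), so $f\mapsto \nn(f)+M\ppn(f)$ is linear. Since $\nnn(\myvs^1)$ is itself a polyhedral cone by \Cref{thm.subnn}, its pre-image under this linear map is polyhedral, and the intersection of finitely many such pre-images with the signed support cone is a convex polyhedral cone.

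For the LP size, I would take the coefficients $\{f_{\va}\}_{\va\in\supp(\myvs)}$ as the monomial variables; there are exactly $m=m_1+m_2$ of them. The signed support constraint $f\preceq \myvs$ is encoded by \eqref{eq.partialrep}. For every $M\in\cM$, the condition $\nn(f)+M\ppn(f)\in\nnn(\myvs^1)$ is a single non-negativity constraint in the sense of \Cref{thm.subnn} (each may be expanded into an $\bO(m_1d_1)$-size extended formulation via \Cref{thm.extend}, which accounts for the word ``extended''). Since $\cM$ can be chosen with $|\cM|\le\Gamma(\myvs^2)$, we obtain the claimed $\bO(\Gamma(\myvs^2))$ non-negativity constraints.

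For the biconditional, I would fix $f\preceq \myvs$, so that the unique decomposition $f=\nn(f)+\ppn(f)$ satisfies $\nn(f)\preceq\myvs^1$ and $\ppn(f)\preceq\myvs^2$. By \Cref{lem.nonneg2} applied to the exact set $\cM$ w.r.t.\ $\myvs^2$, the binary non-negativity of $f$ is equivalent to the binary non-negativity of each $\nn(f)+M\ppn(f)$ over $M\in\cM$. The key verification (which I consider the main obstacle of the argument) is that each such combination lies in $\ssc(\myvs^1)$: by \Cref{def.overtrans}, $M\ppn(f)$ is a linear polynomial, so its support sits in $\bB_{0:1}$; the assumption $\bB_{0:1}\subseteq\supp(\myvs)$ from \Cref{assm}, together with $s^2_{\bB_{0:1}}=\vz$, forces $\bB_{0:1}\subseteq\supp(\myvs^1)$, so the linear part of $\myvs^1$ absorbs $M\ppn(f)$ without sign conflict, and the nonlinear part of $\nn(f)$ already sits inside $\myvs^1_{\bB_{2:n}}\le\vz$. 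Hence $\nn(f)+M\ppn(f)$ is indeed NNS within $\myvs^1$, and by \Cref{thm.subnn} its binary non-negativity is equivalent to membership in $\nnn(\myvs^1)$. Chaining the equivalences yields $f$ binary non-negative iff $f\in\nm(\myvs,\cM)$.
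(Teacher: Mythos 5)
Your proposal is correct and follows essentially the same route as the paper: count the $m=m_1+m_2$ monomial variables, bound the number of non-negativity constraints by $|\cM|\le\Gamma(\myvs^2)$, use $\bB_{0:1}\subseteq\supp(\myvs^1)$ to verify $\nn(f)+M\ppn(f)\preceq\myvs^1$, and chain \Cref{lem.nonneg2} with \Cref{thm.subnn} for the biconditional. The only (harmless) difference is that you justify the cone property via pre-images of $\nnn(\myvs^1)$ under linear maps, where the paper simply notes closure under positive scaling; your step identifying the well-definedness of $\nn(f)+M\ppn(f)\in\ssc(\myvs^1)$ as the main obstacle matches the paper's emphasis exactly.
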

 \begin{proof}{Proof.}
Looking into \eqref{eq.nmalter}, the signed support constraints on $f^1$ and $f^2$ can be translated into a signed support constraint on $f$ itself, and we need $m=m_{{\snn}} +m_{{\spn}}$ monomial variables to represent $f$. As $|\cM(\myvs^{\spn})| \le \Gamma(\myvs^{\spn})$, there are at most $\bO(\Gamma(\myvs^{\spn}))$ cones $\nnn(\myvs^{\snn})$, each of which has a non-negativity constraint. Thus, $\nm(\myvs)$ is a convex polyhedron with an LP representation of the stated size.
Suppose $f$ is a binary polynomial signed-supported within $\myvs$. Since $ \bB_{0:1} \subseteq \supp(\myvs^{\snn})$, it follows that $\ssv(f^1 + Mf^2) \preceq \myvs^{\snn}$ for every $M \in \cM(\myvs^{\spn})$. It follows from \Cref{lem.nonneg2} and  \Cref{thm.subnn} that $f$ is binary non-negative  if and only if $f^1 + Mf^2 \in \nnn(\myvs^{\snn})$ is binary non-negative for every $M \in \cM(\myvs^{\spn})$. This condition is equivalent to $f \in \nm(\myvs)$. Note that for any $f \in \nm(\myvs)$, $\lambda f$ is non-negative for any positive scalar $\lambda$, so $\nm(\myvs)$ is a cone.
\Halmos \end{proof}

We next characterize the complexity of BNP for  any polynomial $f$ signed-supported within $\myvs$. The BNP  is equivalent to a separation problem for $\nm(\myvs)$, which asks to separate $f$ from the cone.  The separation complexity is implicitly parameterized by $\myvs$  as follows.

\begin{proposition}
 \label{propo.sepnn}
   There is an algorithm solving the separation problem for the convex polyhedral cone $\nm(\myvs)$ in $\bO(d_{{\snn}}m_{{\snn}}^2\Gamma(\myvs^{\spn}))$ time.
\end{proposition}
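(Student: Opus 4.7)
The plan is to leverage the extended representation in \eqref{eq.nm}: membership of $f$ in $\nm(\myvs,\cL)$ is certified by (i) the signed support constraints $\nn(f)\preceq \myvs^1$ and $\ppn(f)\preceq \myvs^2$, and (ii) the family of non-negativity constraints $\nn(f)+M\ppn(f)\in \nnn(\myvs^1)$, one for every $M\in\cL$. A violated inequality for $\nm(\myvs,\cL)$ must therefore come from one of these two groups, so it suffices to scan each group in turn and return the first violation found.

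First I would perform the decomposition: given $f$, extract $\nn(f)$ and $\ppn(f)$ in time linear in $m$ by reading off the coefficients as in the formulas defining $\nn$ and $\ppn$. A quick sweep over the $m_1+m_2$ monomial coefficients verifies (i); any violation of the signed support pattern yields a trivial separating hyperplane (a single coefficient inequality), so we may assume from now on that $\ssv(\nn(f))\preceq \myvs^1$ and $\ssv(\ppn(f))\preceq \myvs^2$.

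For (ii), I would iterate over the $|\cL|\le \Gamma(\myvs^2)$ overestimation matrices. For each $M\in\cL$, form the NNS polynomial $h_M\deq \nn(f)+M\ppn(f)$; because $M\ppn(f)$ is a linear polynomial and $\bB_{0:1}\subseteq \supp(\myvs^1)$, we have $h_M\preceq \myvs^1$, so the only remaining check is whether $h_M\in\nnn(\myvs^1)$. By \Cref{thm.sepsub}, the separation problem for $\nnn(\myvs^1)$ is solvable in $\bO(m_1^2 d_1)$ time via a \mincut computation on the flow network built from $h_M$. If $h_M\in\nnn(\myvs^1)$ we move on; otherwise the \mincut algorithm returns a binary point $\vx^\ast$ with $h_M(\vx^\ast)<0$, which (after expansion using linearity of $M$) yields the separating inequality $(\nn(f)+M\ppn(f))(\vx^\ast)\ge 0$ violated by $f$.

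Summing the work over the loop gives $\bO(\Gamma(\myvs^2)\cdot m_1^2 d_1)$, matching the claimed bound. The only delicate point is to argue that constructing $h_M$ does not dominate the separation step: since $M\ppn(f)$ has at most $n_1=\bO(m_1)$ non-zero entries and $\nn(f)$ has $m_1$ coefficients, each $h_M$ is built in $\bO(m_1+m_2)$ time, which is absorbed by $\bO(m_1^2 d_1)$. The main conceptual obstacle is simply recognizing that each inner separation problem is exactly of the form handled by \Cref{thm.sepsub} — once the decomposition $f=\nn(f)+\ppn(f)$ is in hand and $M$ has been applied, $h_M$ is a genuine NNS polynomial within $\myvs^1$, so the machinery of \Cref{sec.para} applies verbatim.
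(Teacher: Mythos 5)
Your proposal is correct and follows essentially the same route as the paper: decompose $f$, then for each of the at most $\Gamma(\myvs^2)$ matrices $M\in\cL$ run the \mincut-based separation of \Cref{thm.sepsub} on the NNS polynomial $\nn(f)+M\ppn(f)\preceq\myvs^1$, giving $\bO(\Gamma(\myvs^2)\,m_1^2 d_1)$ total. The only quibble is that the linear polynomial $M\ppn(f)$ has at most $n_2$ (not $n_1$) non-zero entries, but since $n_2\le n\le m_1$ by \Cref{assm} this does not affect the bound.
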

\begin{proof}{Proof.}
There are at most $\Gamma(\myvs^{\spn})$ sub-cones  $\nnn(\myvs^{\snn})$ in  $\nm(\myvs)$.  It follows from \Cref{assm} that $\bB_{0:1}\subseteq\supp(\myvs^{\snn})$, so $m_{{\snn}}$ counts the constant and linear monomials and hence $n\le m_{{\snn}}-1=\bO(m_{{\snn}})$. It suffices to search for a sub-cone $\nnn(\myvs^{\snn})$ that does not contain the target polynomial $\nn(f) + M\ppn(f)$   and separate it from that cone. By \Cref{thm.sepsub}, each such separation costs $\bO(d_{{\snn}}m_{{\snn}}^2)$ time, and the result follows.
\Halmos \end{proof}

Then, we want to solve BPO for $f$:
\begin{equation}
\label{eq.prim}
    \lambda^\star \deq \min_{\vx \in \bB} f(\vx).
\end{equation}
The BPO is equivalent to the dual problem $
    \lambda^\star  = \max_{\lambda \in \bR}\{\lambda: f - \lambda \textup{ is binary non-negative}\} $.
 Since $\lambda$ is a scalar, by \Cref{assm},  $\ssv(f - \lambda)  \preceq \myvs$. As every binary non-negative $f - \lambda$ must be in $\nm(\myvs)$, the dual problem  is equivalent to a  linear optimization on the convex cone $\nm(\myvs)$, which we call  \emph{signed reformulation} of BPO:
\begin{equation}
\label{eq.dualref}
    \lambda^\star  = \max_{\lambda \in \bR}\{\lambda: f - \lambda \in \nm(\myvs)\}.
\end{equation}

The signed reformulation can be solved using the ellipsoid algorithm (see \citet{grotschel1981ellipsoid}),
whose complexity is polynomial in the separation complexity bound $\bO(d_{{\snn}}m_{{\snn}}^2\Gamma(\myvs^{\spn}))$ given in \Cref{propo.sepnn}. Moreover, it follows from \Cref{thm.extend} that  the signed reformulation admits an extended formulation of size $\bO(d_{{\snn}}m_{{\snn}}\Gamma(\myvs^{\spn})+md)$.

 The dominant factor in the complexity of solving the signed reformulation   is  the concave-extension complexity upper bound  $\Gamma(\myvs^{\spn})$, which determines the achievable number of  sub-cones $\nnn(\myvs^{\snn})$ in  $\nm(\myvs)$.
By \Cref{prop.filter}, this factor $\Gamma(\myvs^{\spn})$ grows exponentially w.r.t. the derived parameters of $\myvs^{\spn}$. We  illustrate the signed reformulation through the following example.

\begin{example}
\label{exampl.decomp}
Consider $f^1(\vx) =- c_1 x_2 x_3 -  c_2  x_1 x_3 x_4 - c_3 x_3 x_5 + c_4 x_2 + c_5 x_3- c_6 x_4 + c_7$ and $f^2(\vx) = c_8 x_1 x_2  + c_9 x_2 x_3 x_4  + c_{10} x_4 x_5$, where all coefficients $c$ are positive. The family of binary polynomials $f=f^1+f^2$ includes the specific polynomial in \Cref{exampl.pre}.  We use the standard extension method. For example, there is an overestimation matrix $M$ in $\cM(\myvs^{\spn})$ inducing a linear overestimator $(Mf^2)(\vx) = c_8 x_1 + c_9 x_4 + c_{10} x_4$ (using $c_8 x_1 \ge c_8 x_1 x_2, c_9 x_4 \ge c_9 x_2 x_3 x_4, c_{10} x_4\ge c_{10} x_4 x_5$). Thus, $f^1(\vx) + (Mf^2)(\vx)  =- c_1 x_2 x_3 -  c_2  x_1 x_3 x_4 - c_3 x_3 x_5 +  c_8 x_1 +c_4 x_2 + c_5 x_3 + ( c_9 + c_{10} - c_6 ) x_4  + c_7 $. There are in total 12 overestimation matrices in $\cM(\myvs^{\spn})$. Thus, we have 12  $\nnn(\myvs^{\snn})$ cones.
\end{example}

We examine the Lasserre hierarchy for BPO to illustrate how inner approximations based on binary non-negativity certificates lead to a hierarchy of relaxations. The $i$-th level Lasserre relaxation takes the form $ \lambda^i_{sos}\deq\max_{\lambda \in \bR}\{\lambda: f - \lambda \in \sos^{i}(n)\},$ where $\sos^{i}(n)$ is the set of $n$-variate SoS  polynomials of degree at most $i$. The relaxation value $\lambda^i_{sos}$ is a lower bound for $\lambda^\star$.  In the worst case, it requires $i=n$ to make the Lasserre relaxation exact (i.e., $\lambda^n_{sos} = \lambda^\star$) (see \citet{lasserre2002explicit}). Since $\sos^{1}(n) \subseteq \cdots \subseteq \sos^{n}(n)$, their lower bounds are non-decreasing, \ie $\lambda^1_{sos} \le \cdots \le \lambda^n_{sos}$. Generally, one expects to obtain either an exact minimum value or a good lower bound at a level lower than $n$.

We compare the signed reformulation with the last level of Lasserre relaxations.
The set $\sos^{n}(n)$ has an SDP representation of size $\bO(2^n)$, which does not depend on sparsity. By \Cref{thm.extend}, the LP representing $\nm(\myvs)$ has size $\bO(d_{{\snn}}m_{{\snn}}\Gamma(\myvs^{\spn})+md)$. Since $\Gamma(\myvs^{\spn})\le 2^n$ by \Cref{prop.filter}, this size is bounded by $\bO(d_{{\snn}}m_{{\snn}}2^n+md)$. Thus, up to polynomial factors, the worst-case size of the signed reformulation is comparable to that of the Lasserre hierarchy, while it can be much smaller depending on sparsity and signed support patterns. The signed reformulation will be the last level of relaxation in our hierarchies of relaxations, see \Cref{sec.refined} below.

\section{Refined signed support decomposition.}
\label{sec.refined}
For a binary polynomial, we refine the signed support decomposition to further exploit its signed support pattern. The refined decomposition uses sums of certain binary non-negative NDS polynomials, which we call \emph{signed certificates}. The refined decomposition allows us to construct nested families of conic inner approximations of $\nm(\myvs)$, which lead to  our new hierarchies of relaxations for BPO \eqref{eq.prim}.  We first define the refined signed support decomposition for signed support vectors.

\begin{definition}
\label{def.decomp}
Given $\myvs \in \{-1,0,1\}^{\bB}$,  a collection $ \{\vth^{k}\}_{k \in [\ell]}$ of signed support vectors is a refined signed support decomposition of $\myvs$, if   $\bB_{0:1} \subseteq \supp(\vth^k)$ and $\supp(\myvs) \subseteq \cup_{k \in [\ell]} \supp(\vth^k)$.
\end{definition}

For each $k \in [\ell]$, let $\vth^{k,{\snn}}$ and $\vth^{k,{\spn}}$ denote the signed support decomposition of $\vth^k$.
We then introduce the refined signed support decomposition that decomposes any binary polynomial $f$ signed-supported within $\myvs$ into a sum  of signed certificates and a PS polynomial.
We call the signed support vector of each summand a \emph{decomposed signed support vector}. The key idea is to choose the decomposed signed support vectors so that they form a refined signed support decomposition of $\ssv(f)$.

\begin{definition}
\label{def.cn}
Given $\myvs \in \{-1,0,1\}^{\bB}$,  let $\Theta(\myvs)\deq \{\vth^{k}\}_{k \in [\ell]}$ be a refined signed support decomposition of $\myvs$.  Then, the sum-of-binary-non-negative-NDS polynomials (for brevity, the sum of signed certificates)  w.r.t. $\Theta(\myvs)$ is the set $\snm(\Theta(\myvs))$:
{\small \begin{equation}
   \label{eq.snm}
  \{f \in \bR(\vx): f = g + \sum_{k \in [\ell]}  f^k \land\ \forall k \in [\ell]\; f^k \in \nm(\vth^k) \land\   g \ge \vz \land\ \supp(g)  \subseteq \supp(\myvs) \cup \supp(\bB_{0:1})\}.
\end{equation}}
Moreover, for each $k\in[\ell]$, let $\cM(\vth^{k,{\spn}})$ denote an exact set of overestimation matrices w.r.t. $\vth^{k,{\spn}}$ as in \Cref{def.overtransset}. These sets implement the cones $\nm(\vth^k)$ as in \Cref{defn:nm}; denote $\fL(\myvs)\deq\{\cM(\vth^{k,{\spn}})\}_{k\in[\ell]}$.
\end{definition}

 We look at  the set $ \snm(\Theta(\myvs))$ and its representation.  Any polynomial $f \in \snm(\Theta(\myvs))$ can be decomposed as the sum  of a PS polynomial $g$ and binary non-negative polynomials $f^k$ (signed certificates). This parameterization is flexible, as one can adjust the signed support vector $\vth^{k}$ of $f^k$ to decompose that of $f$.  Moreover, the sum of the concave-extension complexity upper bounds $\Gamma(\vth^{k,{\spn}})$ of  $f^k$ may be smaller than  $\Gamma(\myvs^{\spn})$ of $f$: this opens up possibilities for constructing relaxations for BPO with controllable complexity. We have the following observation.

 \begin{lemma}
 The set $\snm(\Theta(\myvs))$ in  \eqref{eq.snm} is a convex polyhedral cone, and
any $f \in   \snm(\Theta(\myvs))$ is binary non-negative.
 \end{lemma}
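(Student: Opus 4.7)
The plan is to verify the two assertions separately, handling the convex polyhedral cone property first by writing $\snm(\myvs,\Theta,\fL)$ as the image under a linear map of the intersection of finitely many polyhedral cones, and then handling binary non-negativity by decomposing $f$ into pieces that are already known to be binary non-negative.

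For the first claim, I would introduce auxiliary variables corresponding to the summands in the decomposition and work in the product space $\bR(\vx)^{\ell+1}$ indexed by $(g, f^1, \ldots, f^\ell)$. By \Cref{thm.nn}, each constraint $f^k \in \nm(\vth^k, \cL^k)$ is an extended LP representation, hence defines a convex polyhedral cone in $\bR(\vx)$ (after projection from its extension). The constraint $g \ge \vz$ with $\supp(g) \subseteq \supp(\myvs)\cup \supp(\bB_{0:1})$ is a system of non-negativity and equality constraints on the coefficients of $g$, which defines a polyhedral cone. Their product is a polyhedral cone, and the equality $f = g + \sum_{k\in[\ell]} f^k$ is a linear constraint. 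Since linear images of polyhedral cones are polyhedral cones, projecting onto the $f$-coordinate yields that $\snm(\myvs,\Theta,\fL)$ is a convex polyhedral cone.

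For the second claim, let $f \in \snm(\myvs,\Theta,\fL)$ with witnesses $g$ and $\{f^k\}_{k\in[\ell]}$. By \Cref{thm.nn}, each $f^k \in \nm(\vth^k, \cL^k)$ is binary non-negative, so $f^k(\vx) \ge 0$ for all $\vx \in \bB$. For $g$, the constraint $g \ge \vz$ says that all coefficients of $g$ are non-negative, and since every binary monomial $\vx^{\va}$ takes values in $\{0,1\}$ on $\bB$, we have $g(\vx) \ge 0$ for all $\vx \in \bB$. Summing, $f(\vx) = g(\vx) + \sum_{k \in [\ell]} f^k(\vx) \ge 0$ on $\bB$.

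I do not anticipate a real obstacle here since both claims reduce to elementary assembly of facts already established: \Cref{thm.nn} gives us that each $\nm(\vth^k,\cL^k)$ is a polyhedral cone of binary non-negative polynomials, and the PS component $g$ is trivially binary non-negative by coefficient non-negativity. The one place that requires a small amount of care is ensuring that projection from the extended representation preserves the cone property; this is standard (Fourier–Motzkin elimination, or the fact that linear images of polyhedra are polyhedra), so no subtlety arises.
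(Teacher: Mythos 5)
Your proposal is correct and follows essentially the same route as the paper's proof: the paper likewise observes that $\snm(\myvs, \Theta, \fL)$ is the (Minkowski) sum of the polyhedral cones $\nm(\vth^k,\cL^k)$ and the cone of PS polynomials $g$, hence polyhedral, and that binary non-negativity of $f$ follows from that of each summand. Your write-up merely spells out the two standard facts the paper leaves implicit (that linear images of products of polyhedral cones are polyhedral cones, and that $g \ge \vz$ forces $g \ge 0$ on $\bB$ since binary monomials take values in $\{0,1\}$), which is harmless.
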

 \begin{proof}{Proof.}
Examining \eqref{eq.snm}, the set $\snm(\Theta(\myvs))$ is a Minkowski sum of the convex polyhedral cones $\nm(\vth^k)$ and $\{g: g \ge \vz \land\ \supp(g)  \subseteq \supp(\myvs) \cup \supp(\bB_{0:1})\}$, so it is a convex polyhedral cone.
 As $f^k$ and $g$ are binary non-negative, $f$ is binary non-negative as well.
\Halmos
 \end{proof}

Taking a closer look at $\snm(\Theta(\myvs))$ reveals that it encompasses two opposing categories of binary polynomials: $g$ and $f^k$.  They enable us to represent two extreme scenarios where $f$ can be either a PS or an NS polynomial.

In the following, we restrict the set $\Theta(\myvs)$ to satisfy the following additional conditions: (i) (NNS equivalence) for every $k \in [\ell]$, $\vth^{k,{\snn}} = \myvs^{\snn} $, (ii) (disjoint PS partition) for every $i, j \in [\ell]$, $\supp(\vth^{i,{\spn}}) \cap \supp(\vth^{j,{\spn}}) = \varnothing$, and  $\myvs^{\spn} = \sum_{k \in [\ell]} \vth^{k,{\spn}}$. We remark that these conditions are adjustable  based on the signed support pattern of $f$. For example, if $\supp(\myvs^{\snn}) = \varnothing$, we can set, for every $k \in [\ell]$, $\vth^{k,{\snn}} = \myvs$; for every $i, j \in [\ell]$, $\supp(\vth^{i,{\spn}}) \cap \supp(\vth^{j,{\spn}}) = \varnothing$, and  $\myvs = \sum_{k \in [\ell]} \vth^{k,{\spn}}$.

Since expanding the support $\vth^{k,{\snn}}$ of the NNS component of $\vth^k$ only incurs an additional computational cost polynomial in the derived parameters of $\vth^k$ (see \Cref{propo.sepnn} and \Cref{thm.sepsub}), this yields a computationally cheap way to enlarge the search space.  Thus, the NNS equivalence constraint stipulates that the support of  $\vth^{k,{\snn}}$ equals $\myvs^{\snn}$. Enlarging the support $\vth^{k,{\spn}}$ of the PS component of $\vth^k$ incurs a computational cost exponential in the derived parameters of $\vth^k$ (see \Cref{propo.sepnn} and \Cref{prop.filter}). Consequently, enlarging $\Theta(\myvs)$ in the latter manner is computationally expensive.  This explains why we make the supports of $\vth^{k,{\spn}}$ form a disjoint partition of $\myvs^{\spn}$ rather than merely a cover.

Based on the refined signed support decomposition, one can construct a convex relaxation for BPO, by replacing  $ \nm(\myvs)$ in the signed reformulation \eqref{eq.dualref} with its  conic inner approximation $\snm(\Theta(\myvs))$:
\begin{equation}
\label{eq.sumdual}
     \lambda^\star(\Theta(\myvs))  \deq \max_{\lambda \in \bR}\{\lambda: f - \lambda \in \snm(\Theta(\myvs))\}.
\end{equation}
Since $ \snm(\Theta(\myvs)) \subseteq \nm(\myvs)$, the value $\lambda^\star(\Theta(\myvs))$ is a lower bound to the optimal value $\lambda^\star$ of BPO.

Given $\myvs$, the \emph{support inference problem} asks:
\begin{multline}
\label{eq.inference}
    \text{for an acceptable complexity to compute } \lambda^\star(\Theta(\myvs))  \text{, determine the  decomposed  signed support} \\ \text{ vectors in } \Theta(\myvs) \text{ that yield the largest lower bound } \lambda^\star(\Theta(\myvs)).
\end{multline} This question can be viewed as an inverse problem, where we aim to infer the decomposed signed support vectors from observing the signed support vector of $f$.  The problem arises naturally when designing sparse non-negativity certificates to approximate a dense polynomial, e.g., generating circuit polynomials \cite{papp2023duality}.

\section{Hierarchies of relaxations for BPO.}
\label{sec.nest}
In this section, we tackle the challenging support inference problem \eqref{eq.inference} from the perspective of parameterized complexity. We adopt a hierarchical approach that takes advantage of the refined signed support decomposition to construct the family $\Theta(\myvs)$. Our approach involves gradually expanding the sets in $\Theta(\myvs)$ by incorporating more signed support vectors.  This leads to nested cones $\snm(\Theta(\myvs))$ of binary non-negative polynomials of increasing complexity. Based on \eqref{eq.sumdual}, we  construct two types of hierarchies of relaxations for BPO.

 We first introduce the template hierarchical partition, and then describe two applications of this partition to generate the family $\Theta(\myvs)$. Given a finite base set $C$, the output of the hierarchical partition is  a binary tree $T(C)$. The \emph{depth} of a node is the number of nodes on the path from that node to the root, including both endpoints; in particular, the root has depth $1$. Let $\bar{h}$ denote the maximum depth among all nodes, and let $r(C)$ denote the root node. The \emph{height} of a node at depth $h$ is defined as $\bar{h} + 1 - h$, so $\bar{h}$ is also the number of layers.  The $i$-th layer $V^i(C)$ of the tree comprises all nodes at height $i$, and we define its \emph{width} $\ell^i$ as $|V^i(C)|$. We write $V^{i}(C)=\{V^{i}_1(C),\ldots, V^{i}_{\ell^i}(C)\}$.

The tree construction starts from the first layer $V^1(C) \deq \{\{c\}\;|\; c \in C\}$ as a trivial partition and builds the layers $V^{i+1}(C)$ from $V^{i}(C)$ recursively, such that each layer forms a partition of $C$. If $\ell^i$ is even, then $V^{i}(C)$ is partitioned into pairs, and the union of two nodes in each pair is taken as their parent node in $V^{i+1}(C)$. If $\ell^i$ is odd, then a dummy node $\varnothing$ is added to $V^{i}(C)$, and the above procedure is applied as in the even case. When $\ell^i = 1$, the construction terminates with $V^{i}(C)=\{C\}$, and the root node is $r(C)=C$.  The tree $T(C)$ is called a \emph{partition tree} of $C$, and it has the following properties:

\begin{lemma}\label{prop.arb}
Given a finite set $C$, $r(C) = C$, and the maximum depth $\bar{h} \le \ceil{\log |C|}+1$. For every $i \in [\bar{h}]$,
$V^{i}(C)$ is a partition of $C$, $|V^i(C)| \le  2^{\bar{h}-i} $, and, for every $k \in [\ell^i],|V^i_k(C)| \le 2^{i - 1}$.
\end{lemma}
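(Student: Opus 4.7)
The plan is to handle each claim by induction on the depth $i$, leveraging the recurrence $\ell^{i+1}(C) = \lceil \ell^i(C)/2 \rceil$ that falls directly out of the construction: when $\ell^i(C)$ is even, all nodes are paired and merged, and when $\ell^i(C)$ is odd, the extra empty node is paired with one real node, so in both cases the number of nodes shrinks by a factor of two rounded up. Combined with the base $\ell^1(C) = |C|$, this recurrence drives everything.

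For the partition property and the node-size bound I would use a forward induction on $i$. The base $i = 1$ is immediate: $V^1(C)$ is the set of singletons of $C$, which partitions $C$ with nodes of size $1 = 2^0$. For the inductive step, $V^{i+1}(C)$ is formed by pairing the nodes of $V^i(C)$ (after possibly appending $\varnothing$) and replacing each pair by its union. Disjointness is preserved because disjoint pairs yield disjoint unions, and the empty node contributes nothing to either disjointness or to the elements covered; hence $V^{i+1}(C)$ partitions $C$ as well. Likewise, a node of $V^{i+1}(C)$ is the union of at most two nodes of $V^i(C)$, each of size at most $2^{i-1}$ by hypothesis, giving the size bound $2^i$.

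For the width bound $|V^i(C)| \le 2^{h(C)+1-i}$ I would work backward from $i = h(C)$, using the termination condition $\ell^{h(C)}(C) = 1 \le 2^1$ as the base. The inductive step follows since the recurrence gives $\ell^i(C) \le 2\ell^{i+1}(C)$, so $|V^i(C)| \le 2 \cdot 2^{h(C)-i} = 2^{h(C)+1-i}$. The identification $r(C) = \{C\}$ then drops out: at depth $h(C)$ there is a single node, and by the partition property already established, this node equals the union of all elements of $C$, namely $C$ itself.

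For the height bound, iterating the recurrence forward yields $\ell^i(C) \le \lceil |C|/2^{i-1} \rceil$ via the standard identity $\lceil \lceil x/2 \rceil / 2 \rceil = \lceil x/4 \rceil$. The construction terminates as soon as $\lceil |C|/2^{i-1} \rceil = 1$, i.e.\ $2^{i-1} \ge |C|$, which holds for $i-1 \ge \lceil \log_2 |C| \rceil$. The only real obstacle is the careful handling of the ceiling arithmetic and matching the depth-indexing convention so the bound comes out exactly as stated; everything else is routine unwinding of the recursive construction.
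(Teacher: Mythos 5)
Your proposal is correct and follows essentially the same route as the paper: a forward induction shows each $V^i(C)$ partitions $C$ and bounds the node sizes, the halving recurrence $\ell^{i+1}(C)=\lceil \ell^i(C)/2\rceil$ yields the width and height bounds, and $r(C)=\{C\}$ follows because the root level is a one-element partition of $C$. The ceiling/indexing issue you flag at the end is real but lies in the paper's statement rather than in your argument: with height counted as the number of \emph{nodes} on the path to the root (as the paper defines it), the construction actually gives $h(C)\le\lceil\log|C|\rceil+1$ (e.g.\ $|C|=4$ yields three levels), and the paper's own proof (``this reduction is logarithmic'') silently elides this off-by-one, which is harmless for the asymptotic claims downstream.
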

\begin{proof}{Proof.}
    We know that, at the leaves, $V^{1}(C)$ is a partition of $C$. Using a basic induction argument, we know that at each height $1 \le i \le \bar{h} - 1$,  as $V^{i}(C)$ is a partition of $C$ and the nodes of $V^{i+1}(C)$ are the union of two  nodes of  $V^{i}(C)$, $V^{i+1}(C)$ is also a partition of $C$.  Since the final layer has one node and remains a partition of $C$, its unique node is $C$; hence $r(C)=C$. Since, at each height, we take the union of two nodes to form a node of the next height, $T(C)$ is a binary tree.  This reduction is logarithmic. It follows that $\bar{h} \le  \ceil{\log |C|}+1$, $|V^i(C)| \le 2^{\bar{h}-i}$, and $|V^i_k(C)| \le 2^{i - 1} $.
\Halmos \end{proof}

 Note that we are only concerned with the upper bounds:  when $|C|$ is not a power of 2, other dummy nodes $\varnothing$ are added to simplify the counting  process and binary tree construction process.
The layers form a \emph{nested family} of partitions of $C$.
 The partition tree $T(C)$ can be constructed in time polynomial in $|C|$.   \Cref{fig.arb} shows a partition tree $T(C)$ for $C = \{c_1, \ldots, c_{2^h-1}\}$, where $h$ is a natural number and $\bar h=h+1$.

 \begin{figure}[!h]
\centering

\begin{tikzpicture}[level/.style={sibling distance=47mm/#1}]
\node [draw] (z){$\{c_1,\ldots, c_{2^h-1}\}$}
  child {node [draw] (a) {$\{c_1,\ldots, c_{2^{h-1}}\}$}
    child {node {$\vdots$}
      child {node [draw] (d) {$\{c_1\}$}}
        child {node [draw] (e) {$\{c_2\}$}}
    }
     child {node {$\vdots$}
    }
  }
  child {node [draw] (j) {$\{c_{2^{h-1}+1},\ldots, c_{2^h-1}\}$}
     child {node {$\vdots$}
    }
   child {node {$\vdots$}
    child {node [draw] (o) {$\{c_{2^h-1}\}$}}
    child {node [draw] (p) {$\varnothing$}
        child [grow=right] {node (q) {} edge from parent[draw=none]
          child [grow=right] {node [align=left] (u) {layer $V^1(C)$\\(height $1$, depth $h+1$)} edge from parent[draw=none]
            child [grow=up] {node (r) {$\vdots$} edge from parent[draw=none]
              child [grow=up] {node [align=left] (s) {layer $V^{h}(C)$\\(height $h$, depth $2$)} edge from parent[draw=none]
                child [grow=up] {node [align=left] (t) {layer $V^{h+1}$\\(height $h+1$, depth $1$)} edge from parent[draw=none]
                }
            }
          }
        }
      }
    }
  }
};
\path (o) -- (e) node (x) [midway] {$\cdots$};
\end{tikzpicture}
\caption{Partition tree $T(C)$}
\label{fig.arb}
\end{figure}

Next, we consider minimizing a binary polynomial $f$ signed-supported within the vector $\myvs$. We shall see that the hierarchical partition of a certain set (which we shall specify later) leads to refined signed support decompositions of $f$, which, in turn,  yield our template hierarchy of (signed) relaxations.  We associate the node indexed at height $i$ and $k \in [\ell^i]$ with a decomposed signed support vector $\vth^{i,k} =\vth^{i,k,{\snn}} + \vth^{i,k,{\spn}}\in \{-1,0,1\}^{\bB}$. Suppose that we can construct  an exact set $\cM(\vth^{i,k,{\spn}})$  of overestimation matrices w.r.t. $\vth^{i,k,{\spn}}$. This leads to a set $\nm(\vth^{i,k})$ of binary non-negative polynomials as in \Cref{defn:nm}. Given the families $\Theta^{i}(\myvs)\deq \{\vth^{i,k}\}_{k \in [\ell^i]},\fL^i(\myvs)\deq\{\cM(\vth^{i,k,{\spn}})\}_{k \in [\ell^i]}$ as in \Cref{def.cn},  we obtain the cone $\snm(\Theta^{i}(\myvs))$ of binary non-negative polynomials by taking the Minkowski sum of $ \nm(\vth^{i,k})$. Substituting $\snm(\Theta^{i}(\myvs))$ in \eqref{eq.sumdual} yields the following \emph{level-$i$ signed relaxation}:
\begin{equation}
\label{eq.templatesparserelax}
    \lambda^i  \deq \lambda^\star(\Theta^i(\myvs)) =  \max_{\lambda \in \bR}\{\lambda: f - \lambda \in \snm(\Theta^{i}(\myvs))\}.
\end{equation}
The  program searches for  a maximum  $\lambda$ such that $f - \lambda$ can be decomposed as a sum in $\snm(\Theta^{i}(\myvs))$. As any polynomial in $\snm(\Theta^{i}(\myvs))$ is binary non-negative,  $\lambda$ is a lower bound for BPO \eqref{eq.prim}.

We say that the family $\{\snm(\Theta^{i}(\myvs))\}_{i \in [\bar{h}]}$ is \emph{nested}, if
$\snm(\Theta^1(\myvs))   \subseteq  \cdots  \subseteq\snm(\Theta^{\bar{h}}(\myvs))$; the family is \emph{complete}, if $ \nm(\myvs) \subseteq  \snm(\Theta^{\bar{h}}(\myvs))$.

\begin{lemma}
\label{lem.nestcomp}
    If $\{\snm(\Theta^{i}(\myvs))\}_{i \in [\bar{h}]}$  is nested and complete, then  $\lambda^{1} \leq \cdots \leq \lambda^{\bar{h}} = \lambda^\star$.
\end{lemma}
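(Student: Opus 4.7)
The plan is to establish the chain in three pieces: monotonicity of the $\lambda^i$ from nestedness, the universal upper bound $\lambda^i \le \lambda^\star$ from the fact that every member of $\snm(\myvs,\Theta^i,\fL^i)$ is binary non-negative, and the matching lower bound $\lambda^h \ge \lambda^\star$ from completeness combined with the signed reformulation \eqref{eq.dualref}.

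First I would argue monotonicity. Fix $i \in [h-1]$ and let $\lambda$ be feasible for \eqref{eq.templatesparserelax} at level $i$, i.e. $f - \lambda \in \snm(\myvs, \Theta^{i}, \fL^i)$. Nestedness then gives $f-\lambda \in \snm(\myvs, \Theta^{i+1}, \fL^{i+1})$, so $\lambda$ is feasible at level $i+1$. Taking the supremum over feasible $\lambda$ yields $\lambda^i \le \lambda^{i+1}$, hence $\lambda^1 \le \cdots \le \lambda^h$.

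Next I would show $\lambda^h \le \lambda^\star$. By the preceding lemma (the one asserting that every element of $\snm(\myvs,\Theta,\fL)$ is binary non-negative), any feasible $\lambda$ at level $h$ satisfies $f(\vx) - \lambda \ge 0$ for every $\vx \in \bB$, and therefore $\lambda \le \min_{\vx \in \bB} f(\vx) = \lambda^\star$. Passing to the supremum gives $\lambda^h \le \lambda^\star$.

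Finally I would use completeness to obtain the reverse inequality at the top level. The signed reformulation \eqref{eq.dualref} asserts $\lambda^\star = \max\{\lambda : f - \lambda \in \nm(\myvs,\cL)\}$, so $f - \lambda^\star \in \nm(\myvs,\cL)$. Completeness, $\nm(\myvs,\cL) \subseteq \snm(\myvs,\Theta^h,\fL^h)$, then forces $f - \lambda^\star \in \snm(\myvs,\Theta^h,\fL^h)$, i.e. $\lambda^\star$ is feasible for \eqref{eq.templatesparserelax} at level $h$, and hence $\lambda^\star \le \lambda^h$. Combined with the previous step this yields $\lambda^h = \lambda^\star$ and completes the chain $\lambda^1 \le \cdots \le \lambda^h = \lambda^\star$. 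There is no real obstacle here: the proof is essentially a bookkeeping exercise that transfers the set-inclusions (nestedness, completeness) into inequalities between optima, using that $\lambda$ appears only additively through $f-\lambda$ so that feasibility is preserved under the inclusions.
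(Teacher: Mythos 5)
Your proof is correct and follows essentially the same route as the paper's: nestedness gives monotonicity of the $\lambda^i$, binary non-negativity of members of $\snm(\myvs,\Theta^h,\fL^h)$ gives $\lambda^h \le \lambda^\star$, and completeness together with the signed reformulation \eqref{eq.dualref} gives $\lambda^h \ge \lambda^\star$. The paper's version is just a terser statement of the same three observations.
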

\begin{proof}{Proof.}
    Larger inner approximations imply better relaxations and larger lower bounds. Since $ \nm(\myvs) \subseteq  \snm( \Theta^{\bar{h}}(\myvs))$, the lower bound $\lambda^{\bar{h}}$ is at least the optimal value $\lambda^\star$. It follows that $\lambda^{\bar{h}} = \lambda^\star$. This completes the proof.
\Halmos \end{proof}

We now need to determine the parameters $\Theta^{i}(\myvs),\fL^i(\myvs)$.
We illustrate how to apply the hierarchical partition and combine it  with the standard and Lovász extension methods to obtain these parameters. This gives us two different nested and complete families of cones of binary non-negative polynomials. Recall that we assume the NNS component $\nn(f)$ of $f$ is signed-supported within the vector $\myvs^{\snn}$ and its PS component $\ppn(f)$ is signed-supported within the vector $\myvs^{\spn}$. Thus, we can obtain two implementations of the template hierarchy of relaxations for lower bounding the binary polynomial $f$.

 \subsection{Standard signed relaxations.}

 We  present the combination of the  hierarchical partition with the standard extension, for which  the base set $C$ is chosen as the support $ \supp(\myvs^{\spn})$ of $\myvs^{\spn}$.  This relaxation method is oriented toward a hierarchical linearization of monomials in $\ppn(f)$.

 The support $ \supp(\myvs^{\spn})$ contains  exponents $\va$ of all monomials $\vx^\va$ in $\ppn(f)$. Consider the $i$-th layer $V^i(C)$ of the partition tree. Given the construction rule of the tree, this layer $V^i(C)$ is a partition of the exponents in $\supp(\myvs^{\spn})$. We construct a signed support vector  $\vth^{i,k}$ for each node $V^i_k(C)$ ($k \in [\ell^i]$) as follows. We decompose  $\vth^{i,k}$ as $\vth^{i,k,{\snn}} + \vth^{i,k,{\spn}}$. The signed support vector $\vth^{i,k,{\snn}}$ is the same as $\myvs^{\snn}$, since it is easy to certify the non-negativity of the corresponding NNS component. The signed support vector $\vth^{i,k,{\spn}}$ is signed-supported within $\myvs^{\spn}$, where entries $\vth^{i,k,{\spn}}_\va$  indexed by exponents   $\va \in V^i_k(C)$ are ones, and all other entries are zeros. We then construct an exact set $\cM(\vth^{i,k,{\spn}})$  of overestimation matrices w.r.t. $\vth^{i,k,{\spn}}$ using the standard extension. This allows for linearizing any PS polynomial signed-supported within $\vth^{i,k,{\spn}}$. In this way, we produce the parameters $ \Theta^{i}(\myvs), \fL^i(\myvs)$.

 \begin{theorem}
\label{prop.nestsedsparse}
The maximum depth, equivalently the number of layers, satisfies $ \bar{h} \le \ceil{\log{m_{{\spn}}}}+1 $, and $\{\snm(\Theta^{i}(\myvs))\}_{i \in [\bar{h}]}$ is nested and complete.
\end{theorem}
\begin{proof}{Proof.}
As $|C| = |\supp(\myvs^{\spn})| = m_{{\spn}}$,   it follows from \Cref{prop.arb}  that $\bar{h} \le \ceil{\log{m_{{\spn}}}}+1$. To prove that $\{\snm(\Theta^{i}(\myvs))\}_{i \in [\bar{h}]}$ is nested, it suffices to show that the set $ \nm(\vth^{i,k})$ associated with any node indexed at height $i$ and $k \in [\ell^i]$ includes the set $ \nm(\vth^{i-1,k'})$ associated with its subnode indexed at height $i-1$ and $k' \in [\ell^{i-1}]$, because this implies that any sum in $\snm(\Theta^{i-1}(\myvs))$ is also in $\snm(\Theta^{i}(\myvs))$. As $ \supp(\vth^{i-1,k',{\spn}}) \subseteq \supp(\vth^{i,k,{\spn}}) $ and $\vth^{i,k,{\snn}} = \vth^{i-1,k',{\snn}}$, the set $\nm(\vth^{i,k})$ includes more binary non-negative polynomials  than  $\nm(\vth^{i-1,k'})$. Since $\vth^{\bar{h},1,{\snn}} = \myvs^{\snn}$ and $\vth^{\bar{h},1,{\spn}} = \myvs^{\spn}$, $\{\snm(\Theta^{i}(\myvs))\}_{i \in [\bar{h}]}$ is also complete.
\Halmos \end{proof}

Implementing the template relaxation \eqref{eq.templatesparserelax} with the proposed  $\snm(\Theta^{i}(\myvs))$ yields the \emph{level-$i$ standard signed relaxation}. By \Cref{lem.nestcomp} and \Cref{prop.nestsedsparse}, we obtain a hierarchy of standard signed relaxations with increasing lower bounds that converge to the optimal value of the BPO.
As the number $m_{{\spn}}$ of  monomials indexed by exponents in $\supp(\myvs^{\spn})$ is at most $2^n$, the last level $\bar{h}$ is at most $n+1$, similar to the Lasserre hierarchy. In the sparse case, $\bar{h} \le \ceil{\log{m_{{\spn}}}}+1$ could be much smaller than $n$.

We analyze the encoding size of the cone $\snm(\Theta^i(\myvs))$, which naturally implies the complexity of linear optimization over this cone.
\begin{proposition}
For $ i \in [\ceil{\log{m_{{\spn}}}}+1]$, the
encoding size of $\snm(\Theta^i(\myvs))$ is  $\bO \left(m_{{\snn}} d_{{\snn}} m_{{\spn}} d_{{\spn}}^{2^i}  \right)$ (the parameters are as in \Cref{assm}).
\end{proposition}
\begin{proof}{Proof.}
The encoding size is the sum of the encoding sizes of its sub-cones, so it is
$\bO \left( m_{{\snn}} d_{{\snn}}  \sum_{k \in [\ell^i] }\Gamma(\vth^{i,k,{\spn}})\right)$  by  \Cref{thm.nn} and \Cref{thm.extend}.  Note that the encoding size of the PS cone does not contribute to the major complexity, we do some simplification next.
 By \Cref{prop.arb}, every node at height $i$ contains at most $2^{i-1}$ exponents. Using the standard extension, each such node satisfies
\[
\Gamma(\vth^{i,k,{\spn}})\le d_{\spn}^{\,2^{i-1}}\le d_{\spn}^{\,2^i}.
\]
Since $\ell^i=\bO(m_{\spn})$, we obtain
\[
\sum_{k\in[\ell^i]}\Gamma(\vth^{i,k,{\spn}})
=\bO(m_{\spn}d_{\spn}^{2^i}).
\]
\Halmos \end{proof}

This encoding-size bound implies that the level-$i$ standard signed relaxation is fixed-parameter tractable (FPT) by the ellipsoid algorithm, given a fixed level number $i$.

 \subsection{Lovász signed relaxations.}

 We  present the combination of the  hierarchical partition  with the relaxed Lovász extension, for which  the base set $C$ is chosen as the variable index set $\cN(\myvs^{\spn})$ (which indexes all variables appearing in monomials $\vx^\va$ for $\va \in \supp(\myvs^{\spn})$).  This relaxation method is oriented toward a hierarchical linearization of PS polynomials with variable indices in $\cN(\myvs^{\spn})$.

  Consider the $i$-th layer $V^i(C)$ of the partition tree. Given the construction rule of the tree, this layer $V^i(C)$ is a partition of the variable indices in $\cN(\myvs^{\spn})$. We construct a signed support vector  $\vth^{i,k}$ for each node $V^i_k(C)$ ($k \in [\ell^i]$) as follows. We decompose  $\vth^{i,k}$ as $\vth^{i,k,{\snn}} + \vth^{i,k,{\spn}}$. The signed support vector $\vth^{i,k,{\snn}}$ is the same as $\myvs^{\snn}$. The signed support vector $\vth^{i,k,{\spn}}$ is not necessarily signed-supported within $\myvs^{\spn}$.  We generate a  set $A^i_k(C) \deq\{\va \in \bB_{2:n}: \supp(\va) \subseteq  V^i_k(C)\}$ of exponents from $V^i_k(C)$. The entries $\vth^{i,k,{\spn}}_\va$  indexed by exponents   $\va \in A^i_k(C)$ are ones, and all other entries are zeros. We then construct $\cM(\vth^{i,k,{\spn}})$ as an exact set of overestimation matrices w.r.t. $\vth^{i,k,{\spn}}$ using the Lovász extension. This allows for linearizing any PS polynomial with variables indexed by $V^i_k(C)$. In this way, we produce the parameters $ \Theta^{i}(\myvs), \fL^i(\myvs)$.

\begin{theorem}
\label{prop.nestsetdense}

The maximum depth, equivalently the number of layers, satisfies $ \bar{h} \le \ceil{\log{n_{{\spn}}}}+1 $, and  $\{\snm(\Theta^{i}(\myvs))\}_{i \in [\bar{h}]}$ is nested and complete.
\end{theorem}
\begin{proof}{Proof.}
As $|C| = |\cN(\myvs^{\spn})| = n_{{\spn}}$,   it follows from \Cref{prop.arb}  that $\bar{h} \le \ceil{\log{n_{{\spn}}}}+1$. To prove that $\{\snm(\Theta^{i}(\myvs))\}_{i \in [\bar{h}]}$ is nested, it suffices to show that the set $ \nm(\vth^{i,k})$ associated with any node indexed at height $i$ and $k \in [\ell^i]$ includes the set $ \nm(\vth^{i-1,k'})$ associated with its subnode indexed at height $i-1$ and $k' \in [\ell^{i-1}]$, because this implies that any sum in $\snm(\Theta^{i-1}(\myvs))$ is also in $\snm(\Theta^{i}(\myvs))$. As $ \supp(\vth^{i-1,k',{\spn}}) \subseteq \supp(\vth^{i,k,{\spn}}) $ and $\vth^{i,k,{\snn}} = \vth^{i-1,k',{\snn}}$, the set $\nm(\vth^{i,k})$ includes more binary non-negative polynomials  than $\nm(\vth^{i-1,k'})$. At the root node, $V^{\bar h}_1(C)=\cN(\myvs^{\spn})$, and therefore
\[
\supp(\vth^{\bar{h},1,{\spn}})
=
\{\va\in\bB_{2:n}: \supp(\va)\subseteq \cN(\myvs^{\spn})\}.
\]
Thus $\myvs^{\spn} \preceq \vth^{\bar{h},1,{\spn}}$. Since $\vth^{\bar{h},1,{\snn}} = \myvs^{\snn}$, the family $\{\snm(\Theta^{i}(\myvs))\}_{i \in [\bar{h}]}$ is complete.
\Halmos \end{proof}

Implementing the template hierarchy of relaxations \eqref{eq.templatesparserelax} yields the \emph{level-$i$ Lovász signed relaxation}.
By \Cref{lem.nestcomp} and \Cref{prop.nestsetdense}, we obtain a hierarchy of Lovász  signed relaxations with increasing lower bounds that converge to the optimal value of the BPO by level $\bar{h}$. Note that the last level satisfies $\bar{h} \le \ceil{\log{n_{{\spn}}}}+1$. We analyze the encoding size  of the cone $\snm(\Theta^{i}(\myvs))$.
\begin{proposition}
For $ i \in [\ceil{\log{n_{{\spn}}}}+1]$, the
encoding size of $\snm(\Theta^{i}(\myvs))$ is  $\bO \left(m_{{\snn}} d_{{\snn}} m_{{\spn}} 2^{2^i} \right)$, where the parameters are defined as in \Cref{assm}.
\end{proposition}
\begin{proof}{Proof.}
The encoding size is the sum of the encoding sizes of its sub-cones $\nm(\vth^k)$, so it is
$\bO \left( m_{{\snn}} d_{{\snn}}  \sum_{k \in [\ell^i] }\Gamma(\vth^{i,k,{\spn}})\right)$ by  \Cref{thm.nn} and \Cref{thm.extend}. Note that the encoding size of the PS cone does not contribute to the major complexity, so we do some simplification next.
 By \Cref{prop.arb}, every  $|V^i_k(\cN(\myvs^{\spn}))| \le 2^i$. Using the Lovász extension, by \Cref{prop.filter}, every $\Gamma(\vth^{i,k,{\spn}}) \le 2^{2^{i}}$.  We note that  $\ell^i = \bO(n_{{\spn}}) \le  \bO(m_{{\spn}})$, so  $ \sum_{k \in [\ell^i] }\Gamma(\vth^{i,k,{\spn}})= \bO(m_{{\spn}} 2^{2^i} )$.  Then the result follows.
\Halmos \end{proof}

 This encoding size implies that the level-$i$ Lovász signed relaxation is FPT by the ellipsoid algorithm, given a fixed level number $i$.

The results show that, in the worst case, the number of levels in the Lovász signed relaxations is smaller than that in the standard signed relaxations. In practice, one usually  uses relaxations at lower levels. This means that the signed certificates $f^k$ in \eqref{eq.snm} should have small encoding sizes. Regarding the standard signed relaxation,  the PS component  of $f^k$ matches  monomials of $f$ indexed by $ \supp(\vth^{i,k,{\spn}})$, which is a small subset of $\supp(\myvs^{\spn})$. For the Lovász signed relaxation, the PS component of $f^k$ corresponds to the restriction of $f$ to the variables indexed by $V^{i}_k(\cN(\myvs^{\spn}))$; these variables form a small subset of $\cN(\myvs^{\spn})$. This is a noteworthy  difference.

\section{Computational results.}
\label{sec.cresult}
In this section, we present the results of our computational experiments for the proposed relaxations. The source code, data, examples, and detailed results can be found in our online repository: \href{https://github.com/lidingxu/BPOrelaxations}{github.com/lidingxu/BPOrelaxations}. The experiments were conducted on a server with an Intel Xeon W-2245 CPU @ 3.90GHz and 128GB main memory. The reported runs used Julia 1.8.3, JuMP 1.13.0, MosekTools 0.15.0, and MOSEK 10.1.2. Unless stated otherwise, runs used one thread, default MOSEK tolerances, and a one-hour time limit per relaxation.

The Sherali-Adams, Lasserre, and signed relaxations were tested on \maxcut problems.
  Consider an undirected graph $G=(V,E,w)$, where $V$ is the set of nodes, $E$ is the set of edges, and $w$ is a weight function over subsets of $E$. For a subset $S$ of $V$, its associated cut capacity is  the sum of the weights of edges with one end node in $S$ and the other end node in $V \setminus S$. The \maxcut problem aims at finding a subset $S \subseteq V$ with the  maximum cut capacity. Let $V = [n]$, and we use a binary variable vector $\vx \in \bB$ indicating whether vertices belong to $S$. The problem can be formulated as a quadratic BPO problem: $
    \max_{\vx \in \bB}  \sum_{\{i, j\} \in E}w_{ij} ((1-x_i)x_j + x_i(1-x_j)).$

The Biq Mac library by \citet{wiegele2007biq} offers a collection of medium-size \maxcut instances.
 Our benchmark consists of four sub-benchmarks:
 \begin{itemize}
     \item 20 ``pm1s\_n.i'' instances: graphs with edge weights chosen uniformly from $\{-1,0,1\}$, density 0.1, and $n=80,100$, generated with the \texttt{rudy} code by \citet{rendl2010solving,rinaldi1998rudy};
     \item 10 ``w01\_100.'' instances:  graphs with integer edge weights chosen from $[-10,10]$, density $0.1$, and $n=100$, generated with the \texttt{rudy} code;
     \item 9 ``t2gn\_seed'' instances: toroidal grid graphs with Gaussian-distributed weights and dimensions $n =100,225,400$, generated by \citet{liers2004computing} from applications in statistical physics;
     \item 9 ``t3gn\_seed'' instances: toroidal grid graphs with Gaussian-distributed weights and dimensions $n =125,216,343$, generated by \citet{liers2004computing} from applications in statistical physics.
 \end{itemize}

The relaxations were tested in the following settings:

 \begin{itemize}
     \item \texttt{Sherali-Adams 1}: the first level Sherali-Adams relaxation, where the non-negative certificate is $\sum_{j,j' \in \cN}f_{j j'}x_j (1-x_{j'}) + \sum_{j\in \cN}f_{j} x_j  + \sum_{j\in \cN}\bar{f}_{j} (1-x_{j})$ with $f_{jj'} \ge 0, f_j \ge 0, \bar{f}_{j} \ge 0$.
     \item \texttt{Lasserre 1}: the first level Lasserre relaxation, where the non-negative certificate is $\sum_{i \in I}(q^i(\vx))^2$, with $I$ denoting the finite index set of squared linear forms and each $q^i(\vx)$ a linear polynomial.
     \item \texttt{Standard signed $i$}: the $i$-th level standard signed relaxation for $i = 1, 2, 3$.
 \end{itemize}

We anticipated that the low-level Lovász signed relaxations would exhibit similarity to their corresponding counterparts in the low-level standard signed relaxations, since the PS component in each signed certificate only contains a few monomials in these relaxations. For simplicity of implementation, our experimentation focused solely on the standard signed relaxations. In our experiment, we captured two key metrics: the time taken to solve a relaxation and the relative duality gap associated with that relaxation. The relative duality gap is defined through the following formula: given $\lambda'$ as the optimal value of a relaxation and $\lambda^\ast$ as the optimal value of the BPO obtained from the Biq Mac library, the relative duality gap is calculated as $(\lambda'-\lambda^\ast)/\lambda'$.
  In \Cref{tb.perf}, we reported the shift geometric means of the performance metrics of the relaxations on each sub-benchmark and overall benchmark, and we used boldface to mark the ``winner'' setting for each metric.

\begin{table} [htbp]
\footnotesize
\centering
\resizebox{0.85\columnwidth}{!}{
\begin{tabular}{c|*{2}{c}|*{2}{c}|*{2}{c}|*{2}{c}|*{2}{c}}
\toprule
\multirow{2}{*}{Setting} &
\multicolumn{2}{c|}{pm1s\_ni} &
\multicolumn{2}{c|}{w01\_100} &
\multicolumn{2}{c|}{t2gn\_seed} &
\multicolumn{2}{c|}{t3gn\_seed} &
\multicolumn{2}{c}{All}  \\
&
{gap} &
{time}     &
{gap} &
{time}     &
{gap} &
{time}     &
{gap} &
{time}     &
{gap} &
{time}      \\
\midrule
Sherali-Adams 1  & 0.509 & \textbf{0.0} & 0.47 & \textbf{0.0} & 0.173 & \textbf{0.0} & 0.277 & \textbf{0.0} & 0.373 & \textbf{0.0}   \\
Lasserre 1  & \textbf{0.127} & 4.1 & \textbf{0.115} & 7.4 & 0.183 & 340.8 & 0.189 & 443.5 & \textbf{0.144} & 27.9   \\
Standard signed 1  & 0.275 & 7.6 & 0.252 & 12.2 & 0.104 & 9.3 & 0.167 & 24.6 & 0.21 & 11.0   \\
Standard signed 2  & 0.253 & 14.7 & 0.24 & 26.4 & 0.095 & 55.5 & 0.161 & 125.0 & 0.196 & 32.1   \\
Standard signed 3  & 0.239 & 29.3 & 0.229 & 49.2 & \textbf{0.088} & 128.7 & \textbf{0.156} & 304.9 & 0.186 & 67.2  \\
\bottomrule
\end{tabular}
}
\caption{Summary of performance metrics.}\label{tb.perf}
\end{table}

Our initial observation reveals that the Sherali-Adams 1 relaxation exhibits the weakest performance in terms of closing the duality gap; however, it is the fastest to compute, with almost negligible computation time. Among the small instances (pm1s\_ni, w01\_100, t3gn\_seed), the standard signed relaxations require more  computation time compared to the Lasserre 1 relaxation, yet their computation time is generally comparable. Notably, the bounds of the standard signed relaxations are slightly less favorable than those of the Lasserre 1 relaxation in these cases.

In contrast, for the larger instances (t2gn\_seed and t3gn\_seed), the standard signed relaxations exhibit notably shorter computation times compared to the Lasserre 1 relaxation. Furthermore, the duality gaps associated with the standard signed relaxations are substantially smaller in comparison to those of the Lasserre 1 relaxation. This divergence in performance is attributable to the fact that, in the case of four large instances, the computation of the Lasserre 1 relaxation surpasses the one-hour time threshold, leading to relative duality gaps being registered as 1.

\begin{figure}[h]
    \centering
    \includegraphics[width=0.9\textwidth]{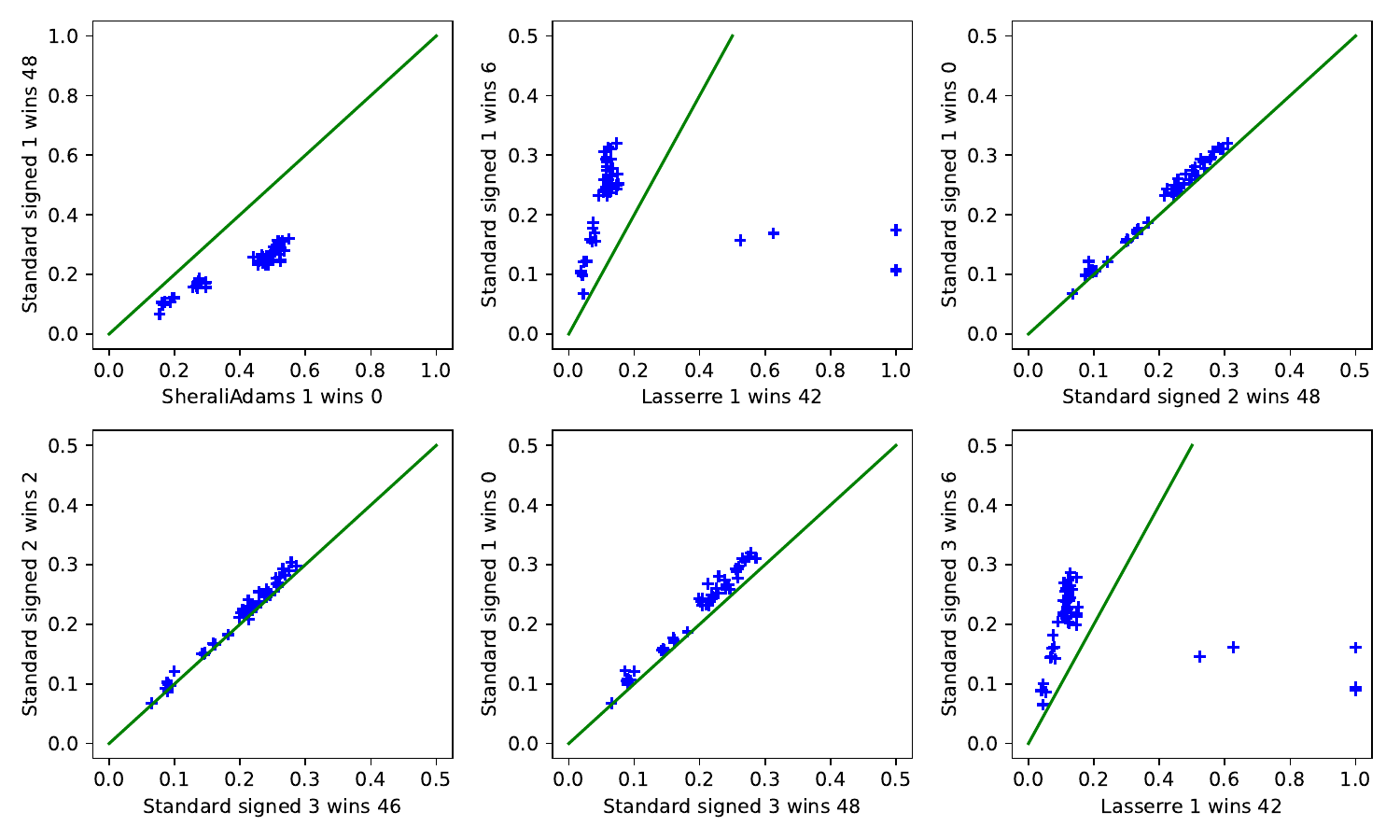}
    \caption{Instance-wise duality gap between pairs of relaxations}
    \label{fig:bpo}
\end{figure}

The scatter plots in \Cref{fig:bpo} serve to contrast the relative duality gaps of different relaxations across each instance. Each scatter plot also  reports the count of instances where one relaxation surpasses the other, denoted as ``win'' instances. We observe that, as the level number $i$ increases, the $i$-th standard signed relaxation consistently yields better bounds. Notably, in the final scatter plot comparing the Lasserre 1 relaxation with the standard signed relaxation 3, there are four outliers. These outliers have a detrimental impact on the performance of the Lasserre 1 relaxation.

From the analysis, we can deduce that the lower-level standard signed relaxations generally yield somewhat less favorable bounds than those of the initial-level Lasserre relaxation. However, unlike the initial Sherali-Adams relaxations, the bounds are still reasonably strong. Notably, the scalability of the standard signed relaxations surpasses that of the first-level Lasserre relaxation when dealing with larger problem instances.

\section{Conclusion.}
\label{sec.conc}

We characterize binary non-negativity for binary polynomials based on their signed support vectors. We identify a conic set of binary polynomials whose binary non-negativity can be certified in strongly polynomial time. We construct an LP representation of such binary polynomials for a given signed support pattern. This yields  new binary non-negativity certificates, called signed certificates. We present an LP to search for the refined signed support decomposition of any binary polynomial as signed certificates.  Using hierarchical partitions, we present two general approaches to construct nested families of cones of binary non-negative polynomials. These families yield new hierarchies of relaxations for BPO. Future work could focus on designing efficient numerical algorithms for solving the relaxations.
%
%
%
%

\appendix
\section{Details on min-cut-based separation.}
\label{sec.append}
Following \Cref{subsec.sepalgo}, we provide details here on the \mincut-based separation algorithm and the proof of \Cref{thm.mincut}.

    At this point, we consider some preprocessing steps of the \mincut problem in $G^c$. First, due to the coefficients of $f^c$, the capacities of $G^c$ are non-negative.  Second, let $A_j\deq \{\va \in A: j \in \supp(\va)\}$ be the set of exponent vectors whose supports contain variable index $j$, \ie there are edges from $\va \in A_j$ to $j$. Since the network is directed, its edges cannot originate from $\cN$ and connect to nodes in $A$. As a result, a cut has finite capacity, i.e., it crosses no unlimited-capacity edge $(\va,j)$, if and only if, for all $j \in \cN$ and $\va \in A_j$, the condition $u_j \geq u_\va$ is satisfied. In other words, $u_\va = 1$ and $u_j = 0$ cannot both occur. Third, for any $\va \in A$ we assume that, if $u_\va = 0$ and for all $j \in \supp(\va)$, $u_j = 1$, then the capacity $C^c(\vvu)$ is given by the sum of capacities over the edges $(\mysl, \va)$ and $(j,\sr)$. As we are only interested in \mincut,  we can set $u_\va = 1$  and thus remove the capacity from $(\mysl, \va)$. We can consider $\vvu$ for which $u_\va = \prod_{j \in \supp(\va)} u_j$ ($\va \in A$). In this way, we can preprocess the cut space by restricting it to $U_f  \deq  \{ \vvu \in U: \forall j \in \cN_f, u_j = 1 \land \forall \va \in A, u_\va = \prod_{j \in \supp(\va)} u_j\}$.

Next, we discuss the relationship between the  preprocessed \mincut problem $\min\limits_{\vvu \in U_f} C^c(\vvu)$ in $G^c$ and the binary polynomial minimization problem $\min\limits_{\vx \in \bB_f} f^c(\vx)$.
We define a map from  binary vectors $\vx \in \bB$ to binary cut labels $\vvu \in U$:
\begin{equation}
\label{eq.map}
    \zeta: \bB \to \{0,1\}^V, \vx \mapsto \vvu, \textup{ s.t. } \vvu_{\vv} = \begin{cases}
       x_j & \mbox{ where }\vv = j \in \cN \\
        \vx^\va & \mbox{ where }\vv = \va  \in A \\
       1 & \mbox{ where }\vv = \mysl \\
       0 & \mbox{ where }\vv = \sr
    \end{cases}\; \qquad \forall \vv \in V,
\end{equation}
and show that the two preprocessed problems are equivalent under the action of the map $\zeta$.

\begin{lemma}
\label{thm.f2cut}
   $\zeta$ is a bijective map from $\bB_f$ to $U_f$, and for any $\vx \in \bB_f$, $f^c(\vx)= C^c(\zeta(\vx))$. Moreover, $\min\limits_{\vx \in \bB_f}f^c(\vx) = \min\limits_{\vvu \in U_f}C^c(\vvu) = \min\limits_{\vvu \in U}C^c(\vvu)$, and any solution $\vvu'$ to $ \min\limits_{\vvu \in U}C^c(\vvu)$ can be converted into a solution to $\min_{\vvu\in U_f}C^c(\vvu)$ without increasing cut capacity by setting $u_j=1$ for all $j\in\cN_f$ and then setting $u_\va=\prod_{j\in\supp(\va)}u_j$ for all $\va\in A$.
\end{lemma}
  \begin{proof}{Proof.}
   We prove $\zeta$ is bijective.
   Note that the entries of $\zeta(\vx)$ indexed by $\cN$ are $\vx$.
 We first prove that $\zeta(\bB_f) \subseteq U_f$. Given $\vx \in \bB_f$, for all $j \in \cN_f$, $x_j = \zeta(\vx)_j =1$. Moreover, for all $\va \in A$,  $\zeta(\vx)_\va =\prod_{j \in \supp(\va)} \zeta(\vx)_j =  \prod_{j \in \supp(\va)} x_j  = \vx^\va  $. Thus, $ \zeta(\vx) \in U_f$.
We next prove that $\zeta$ is injective. For any $\vx \ne \vx' \in \bB_f$, we have that $(\zeta(\vx)_j)_{j \in \cN} = \vx \ne \vx' = (\zeta(\vx')_j)_{j \in \cN}$, which implies that $\zeta(\vx) \ne  \zeta(\vx')$. We then prove that $\zeta$ is surjective. For any $\vvu \in U_f$,  construct $\vx$ for which $ x_j = u_j$ ($j \in \cN$). Since, for all $j \in \cN_f$, $x_j = u_j = 1$, and  for all $\va \in A$,  $u_\va =\prod_{j \in \supp(\va)} u_j =  \prod_{j \in \supp(\va)} x_j  = \vx^\va  $, it follows that $\vx \in \bB_f$ and $\vvu = \zeta(\vx)$. Finally, given any $\vx \in \bB_f$, we show that $f^c(\vx)= C^c(\zeta(\vx))$. Let $\vvu =\zeta(\vx)$. Recall that $f^c(\vx)=\sum_{\va \in A} -f_{\va} (1-\vx^{\va}) +  \sum_{j \in \cN} f^+_j x_j$. If $x_j = 1$ ($f^c(\vx)$ contains $f^+_j$), then $u_j = 1$ ($(j, \sr)$ crosses the cut); otherwise, $u_j = 0$ ($(j, \sr)$ does not cross the cut). If $\vx^\va = 0$ ($f^c(\vx)$ contains $-f_{\va}$), then $u_\va = 0$ ($(\mysl,\va)$ crosses  the cut); otherwise, $u_\va = 1$ ($(\mysl,\va)$ does not cross the cut). We find that $f^c(\vx) = C^c(\vvu)$. As a result, we have that $\min\limits_{\vx \in \bB_f}f^c(\vx) = \min\limits_{\vvu \in U_f}C^c(\vvu)$. For $j\in\cN_f$, the edge $(j,\sr)$ has capacity $f_j^+=0$, and setting $u_j=1$ can only remove crossings of unlimited-capacity edges $(\va,j)$. Afterward, setting $u_\va=\prod_{j\in\supp(\va)}u_j$ cannot increase capacity: if all adjacent $u_j$ are one, changing $u_\va$ from zero to one removes the source edge $(\mysl,\va)$ from the cut; otherwise finite capacity already forces $u_\va=0$.
\Halmos
 \end{proof}

Therefore, we can solve the non-restricted \mincut problem $\min\limits_{\vvu \in U} C^c(\vvu)$ (which is the standard  form accepted by \mincut algorithms) and recover a solution to $\min\limits_{\vvu \in U_f} C^c(\vvu)$. We now obtain the full reduction of the separation problem. The problem of solving $\min\limits_{\vx \in \bB}f(\vx)$ is reduced to finding a solution to the modified problem $\min\limits_{\vx \in \bB_f}f^c(\vx)$, which is again reduced to a \mincut problem $\min\limits_{\vvu \in U_f}C^c(\vvu)$, with the standard form $\min\limits_{\vvu \in U}C^c(\vvu)$. Finally, \Cref{algo.sep} outlines
 the separation procedure.

\begin{corollary}
\label{thm.sepsub}
For any $\myvs\in\{-1,0,1\}^{\bB}$ with $\myvs_{\bB_{2:n}}\le \vz$ and $\bB_{0:1}\subseteq\supp(\myvs)$, \Cref{algo.sep} solves the separation problem for $\nnn(\myvs)$ in $\bO(m^2d)$ time.
\end{corollary}
\begin{proof}{Proof.}
 The capacities of $G^c$ are coefficients of $f$, which are assumed to be fixed-size rationals.
 Since $\bB_{0:1}\subseteq\supp(\myvs)$, the support pattern contains the constant monomial and all $n$ linear monomials, so $n\le m-1=\bO(m)$. Also, $|A|\le m$ and $\sum_{\va\in A}|\supp(\va)|\le |A|d=\bO(md)$. Hence $G^c$ has $|A|+n+2=\bO(m)$ nodes and $|A|+\sum_{\va\in A}|\supp(\va)|+n=\bO(md)$ edges. We use the \maxflow algorithm by \citet{orlin2013max}, which runs in time proportional to the product of the numbers of nodes and edges, giving $\bO(m^2d)$ time.
\Halmos \end{proof}

  \begin{algorithm}[htbp]
\SetAlgoLined
 \textbf{Input:} $f$ given as in \eqref{eq.sep.f}\;
 \textbf{Output:} a vector $\vx^\ast\in\bB$ with $f(\vx^\ast)<0$, if one exists; otherwise certify that $\min_{\vx\in\bB}f(\vx)\ge 0$\;
Extract $f^c$ from $f$ as in \eqref{eq.sep.fc} \;
Construct the flow network $G^c$ from $f^c$\;
Find the min $\mysl\sr$-cut $\vvu^\ast$\;
Construct $\vx^\ast$ by setting $x^\ast_j=u^\ast_j$ for all $j\in\cN$, and then set $x^\ast_j=1$ for all $j\in\cN_f$\;
\If{$f(\vx^\ast) < 0$}{
 Output $\vx^\ast$\;
}
\Else{
 Report that no violated non-negativity sub-constraint exists\;
}
\caption{\mincut based separation algorithm}
\label{algo.sep}
\end{algorithm}

Notably, \Cref{thm.f2cut} and \Cref{thm.sepsub} complete the proof for \Cref{thm.mincut}.
 The following example illustrates the algorithm and its reformulation procedures (codes for verification are provided in \Cref{sec.cresult}).

\begin{example}
\label{exampl.mcf}
Consider $f(\vx) =- x_2 x_3 -2 x_1 x_3 x_4 -5 x_3 x_5 + x_2 + x_3-x_4 + 7$ as the NNS component of the binary polynomial in \Cref{exampl.pre}. Its minimum is attained at  $\vx^\ast = (1,0,1,1,1)$. We have that $f^a = -1, f^b(\vx) = (1-x_2 x_3) +2 (1-x_1 x_3 x_4) +5 (1-x_3 x_5) + x_2 + x_3-x_4$ with $f^b(\vx^\ast) = 1$, and $f^c(\vx) = (1-x_2 x_3) +2 (1-x_1 x_3 x_4) +5 (1-x_3 x_5) + x_2 + x_3$  with $f^c(\vx^\ast) = 2$. The graph $G^c$ (depicted in \Cref{fig.flow2}) has left node set $A = \{\va_1 = \veu_2 + \veu_3, \va_2 = \veu_1 + \veu_3 +\veu_4, \va_3 = \veu_3 + \veu_5\}$ and right node set $\cN = \{1,\dots,5\}$. The capacities of $(\mysl,\va_1), (\mysl, \va_2), (\mysl,\va_3)$ are 1,2,5, respectively.  The capacities of $(1,\sr), (2,\sr), (3,\sr), (4,\sr), (5,\sr)$ are 0,1,1,0,0, respectively. The min $\mysl\sr$-cut has label $\vvu^\ast$ with $u^\ast_{\va_1} = 0, u^\ast_{\va_2} = 1, u^\ast_{\va_3}= 1, u^\ast_1 = 1, u^\ast_2 = 0, u^\ast_3 = 1, u^\ast_4 = 1, u^\ast_5 = 1$. Thus, the cut crosses the edges $(\mysl,\va_1), (1,\sr), (3,\sr), (4,\sr), (5,\sr)$, so its capacity is $C^c(\vvu^\ast) = 1 + 0 + 1 + 0 + 0= 2 = f^c(\vx^\ast)$. Note that $(u^\ast_1,\dots, u^\ast_5) = \vx^\ast$ and $\zeta(\vx^\ast) = \vvu^\ast$ ($u^\ast_{\va_1} = 0 = x^\ast_2 x^\ast_3, u^\ast_{\va_2} = 1 =  x^\ast_1 x^\ast_3 x^\ast_4,  u^\ast_{\va_3} = 1 =  x^\ast_3 x^\ast_5$). As $f(\vx^\ast) = 0$ is the minimum value, we can conclude that $f$ is binary non-negative.
\end{example}

\begin{figure}[H]
    \centering
\begin{tikzpicture}[thick,
  fsnode/.style={fill=myblue,draw,rectangle},
  ssnode/.style={fill=mygreen,,draw,rectangle},
  every fit/.style={draw,inner sep=2pt,text width=2cm},
  ->,shorten >= 2pt,shorten <= 2pt
]

\begin{scope}[yshift=-0.5cm, start chain=going below,node distance=9mm]
\node[fsnode,on chain] (f1) [] { $u^\ast_{\va_1} = 0$};
\node[fsnode,on chain] (f2) [] { $u^\ast_{\va_2} = 1$};
\node[fsnode,on chain] (f3) [] { $u^\ast_{\va_3} = 1$};
\end{scope}

\begin{scope}[xshift=6cm, yshift=0cm, start chain=going below,node distance=4.5mm]
\node[ssnode,on chain] (s1) [] {$u^\ast_{1} = 1$};
\node[ssnode,on chain] (s2) [] {$u^\ast_{2} = 0$};
\node[ssnode,on chain] (s3) [] { $u^\ast_{3} = 1$};
\node[ssnode,on chain] (s4) [] { $u^\ast_{4} = 1$};
\node[ssnode,on chain] (s5) []  {$u^\ast_{5} = 1$};
\end{scope}

\node [fsnode, left=2cm of f2] (vs) {$u^\ast_{\mysl} =1$};
\node [ssnode, right=2.8cm of s3] (vt) {$u^\ast_{\sr} = 0$};
\node [myblue,fit=(f1) (f3),label=above:$A$] {};
\node [mygreen,fit=(s1) (s5),label=above: $\cN$] {};

\draw[dashed] (f1) -- node[above,pos=0.32]{$0/\infty$}(s2);
\draw[dashed] (f1) -- node[above,pos=0.32]{$1/\infty$}(s3);
\draw[dashed] (f2) -- node[above,pos=0.68]{$0/\infty$}(s1);
\draw[dashed] (f2) -- node[above,pos=0.68]{$0/\infty$}(s3);
\draw[dashed] (f2) -- node[above,pos=0.68]{$0/\infty$}(s4);
\draw[dashed] (f3) -- node[above,pos=0.3]{$0/\infty$}(s3);
\draw[dashed] (f3) -- node[above,pos=0.3]{$0/\infty$}(s5);
\draw[red] (vs) -- node[above]{$1/1$}(f1);
\draw[dashed] (vs) -- node[above]{$0/2$}(f2);
\draw[dashed] (vs) -- node[above]{$0/5$}(f3);
\draw[red] (s1) -- node[above,pos=0.4]{$0/0$}(vt);
\draw[dashed] (s2) -- node[above,pos=0.4]{$0/1$}(vt);
\draw[red] (s3) -- node[above,pos=0.4]{$1/1$}(vt);
\draw[red] (s4) -- node[above,pos=0.4]{$0/0$}(vt);
\draw[red] (s5) -- node[above,pos=0.4]{$0/0$}(vt);
\end{tikzpicture}
\caption{The network $G^c$ derived from $f^c$ in \Cref{exampl.mcf}. Nodes are labeled by their values in the \mincut. Edge labels are shown as flow/capacity, and the solid red edges are the edges crossing the cut.}
\label{fig.flow2}
\end{figure}

\section*{Acknowledgments.}
Most of the work done by the first author on this paper occurred while he was a Ph.D.~student at LIX CNRS, \'Ecole Polytechnique, Institut Polytechnique de Paris (IPP), sponsored by a grant of the IPP doctoral school, for which both authors are grateful.


\clearpage


\bibliographystyle{plainnat}

\bibliography{reference}

@misc{wiegele2007biq,
  title={{Biq Mac Library—A collection of Max-Cut and quadratic 0-1 programming instances of medium size}},
  author={Wiegele, Angelika},
  howpublished = {\url{https://biqmac.aau.at/biqmaclib.html}},
  year={2007}
}

@misc{rinaldi1998rudy,
  title={Rudy},
  author={Rinaldi, Giovanni},
  howpublished = {\url{http://www-user.tu-chemnitz.de/~helmberg/rudy.tar.gz}},
  year={1998}
}

@incollection{liers2004computing,
 Author = {Liers, Frauke and J{\"u}nger, Michael and Reinelt, Gerhard and Rinaldi, Giovanni},
 Title = {Computing exact ground states of hard {Ising} spin glass problems by branch-and-cut},
 BookTitle = {New optimization algorithms in physics.},
 Pages = {47--69},
 Year = {2004},
 Publisher = {Weinheim: Wiley-VCH},
 Editor={Alexander K. Hartmann and Heiko Riege}
}

@article{rendl2010solving,
 Author = {Rendl, Franz and Rinaldi, Giovanni and Wiegele, Angelika},
 Title = {Solving {Max}-cut to optimality by intersecting semidefinite and polyhedral relaxations},
 Journal = {Mathematical Programming},
 Volume = {121},
 Number = {2 (A)},
 Pages = {307--335},
 Year = {2010},
}

@article{helmberg1998solving,
 Author = {Helmberg, Christoph and Rendl, Franz},
 Title = {Solving quadratic (0,1)-problems by semidefinite programs and cutting planes},
 Journal = {Mathematical Programming},
 Volume = {82},
 Number = {3 (A)},
 Pages = {291--315},
 Year = {1998}
}

@article{krislock2014improved,
 Author = {Krislock, Nathan and Malick, J{\'e}r{\^o}me and Roupin, Fr{\'e}d{\'e}ric},
 Title = {Improved semidefinite bounding procedure for solving max-cut problems to optimality},
 Journal = {Mathematical Programming},
 Volume = {143},
 Number = {1-2 (A)},
 Pages = {61--86},
 Year = {2014}
}

@article{malick2013bridge,
  title={On the bridge between combinatorial optimization and nonlinear optimization: a family of semidefinite bounds for 0--1 quadratic problems leading to quasi-Newton methods},
  author={Malick, J{\'e}r{\^o}me and Roupin, Fr{\'e}d{\'e}ric},
  journal={Mathematical Programming},
  volume={140},
  number={1},
  pages={99--124},
  year={2013},
  publisher={Springer}
}

@book{fujishige2005submodular,
 Author = {Fujishige, Satoru},
 Title = {Submodular functions and optimization},
 Volume = {58},
 Year = {2005},
Series = {Annals of Discrete Mathematics},
 Publisher = {Amsterdam: Elsevier}
}

@article{nemhauser1978analysis,
  title={An analysis of approximations for maximizing submodular set functions—{I}},
  author={Nemhauser, George L and Wolsey, Laurence A and Fisher, Marshall L},
  journal={Mathematical Programming},
  volume={14},
  number={1},
  pages={265--294},
  year={1978},
  publisher={Springer}
}

@article{crama1993concave,
  title={Concave extensions for nonlinear 0--1 maximization problems},
  author={Crama, Yves},
  journal={Mathematical Programming},
  Number = {1 (A)},
  volume={61},
  pages={53--60},
  year={1993},
  publisher={Springer}
}

@article{grotschel1981ellipsoid,
  title={The ellipsoid method and its consequences in combinatorial optimization},
  author={Gr{\"o}tschel, Martin and Lov{\'a}sz, L{\'a}szl{\'o} and Schrijver, Alexander},
  journal={Combinatorica},
  volume={1},
  number={2},
  pages={169--197},
  year={1981},
  publisher={Springer}
}

@article{laurent2003comparison,
  title={{A comparison of the Sherali-Adams, Lov{\'a}sz-Schrijver, and Lasserre relaxations for 0--1 programming}},
  author={Laurent, Monique},
  journal={Mathematics of Operations Research},
  volume={28},
  number={3},
  pages={470--496},
  year={2003},
  publisher={INFORMS}
}

@article{gatermann2004symmetry,
title = {Symmetry groups, semidefinite programs, and sums of squares},
journal = {Journal of Pure and Applied Algebra},
volume = {192},
number = {1},
pages = {95-128},
year = {2004},
author = {Karin Gatermann and Pablo A. Parrilo},
  publisher={Elsevier}
}

@inproceedings{wang2019new,
author = {Wang, Jie and Li, Haokun and Xia, Bican},
title = {A New Sparse SOS Decomposition Algorithm Based on Term Sparsity},
year = {2019},
publisher = {ACM},
address = {New York, NY},
booktitle = {Proceedings of  ISSAC 2019},
pages = {347–354},
numpages = {8},
location = {Beijing, China}
}

@article{del2023polynomial,
  title={A polynomial-size extended formulation for the multilinear polytope of beta-acyclic hypergraphs},
  author={Del Pia, Alberto and Khajavirad, Aida},
  journal={Mathematical Programming},
  pages={1--33},
  year={2023},
  publisher={Springer}
}

@article{del2023complexity,
  title={On the complexity of binary polynomial optimization over acyclic hypergraphs},
  author={Del Pia, Alberto and Di Gregorio, Silvia},
  journal={Algorithmica},
  volume={85},
  number={8},
  pages={2189--2213},
  year={2023},
  publisher={Springer}
}

@article{del2017polyhedral,
  title={A polyhedral study of binary polynomial programs},
  author={Del Pia, Alberto and Khajavirad, Aida},
  journal={Mathematics of Operations Research},
  volume={42},
  number={2},
  pages={389--410},
  year={2017},
  publisher={INFORMS}
}

@article{wang2021tssos,
  title={{TSSOS: A moment-SOS hierarchy that exploits term sparsity}},
  author={Wang, Jie and Magron, Victor and Lasserre, Jean-Bernard},
  journal={SIAM Journal on Optimization},
  volume={31},
  number={1},
  pages={30--58},
  year={2021},
  publisher={SIAM}
}

@article{wang2020chordal,
 Author = {Wang, Jie and Magron, Victor and Lasserre, Jean-Bernard},
 Title = {Chordal-{TSSOS}: a moment-{SOS} hierarchy that exploits term sparsity with chordal extension},
 Journal = {SIAM Journal on Optimization},
 Volume = {31},
 Number = {1},
 Pages = {114--141},
 Year = {2021}
}

@article{murray2021newton,
  title={Newton polytopes and relative entropy optimization},
  author={Murray, Riley and Chandrasekaran, Venkat and Wierman, Adam},
  journal={Foundations of Computational Mathematics},
 Volume = {21},
 Number = {6},
  pages={1703--1737},
  year={2021},
  publisher={Springer}
}

@article{chandrasekaran2016relative,
  title={Relative entropy relaxations for signomial optimization},
  author={Chandrasekaran, Venkat and Shah, Parikshit},
  journal={SIAM Journal on Optimization},
  volume={26},
  number={2},
  pages={1147--1173},
  year={2016},
  publisher={SIAM}
}

@article{dressler2019approach,
  title={An approach to constrained polynomial optimization via nonnegative circuit polynomials and geometric programming},
  author={Dressler, Mareike and Iliman, Sadik and De Wolff, Timo},
  journal={Journal of Symbolic Computation},
  volume={91},
  pages={149--172},
  year={2019},
  publisher={Elsevier}
}

@article{papp2023duality,
  title={Duality of sum of nonnegative circuit polynomials and optimal {SONC} bounds},
  author={Papp, D{\'a}vid},
  journal={Journal of Symbolic Computation},
  volume={114},
  pages={246--266},
  year={2023},
  publisher={Elsevier}
}

@article{magron2023sonc,
  title={{SONC} optimization and exact nonnegativity certificates via second-order cone programming},
  author={Magron, Victor and Wang, Jie},
  journal={Journal of Symbolic Computation},
  volume={115},
  pages={346--370},
  year={2023},
  publisher={Elsevier}
}

@article{lasserre2002semidefinite,
  title={Semidefinite programming vs. {LP} relaxations for polynomial programming},
  author={Lasserre, Jean B},
  journal={Mathematics of Operations Research},
  volume={27},
  number={2},
  pages={347--360},
  year={2002},
  publisher={INFORMS}
}

@article{sherali1990hierarchy,
  title={A hierarchy of relaxations between the continuous and convex hull representations for zero-one programming problems},
  author={Sherali, Hanif D and Adams, Warren P},
  journal={SIAM Journal on Discrete Mathematics},
  volume={3},
  number={3},
  pages={411--430},
  year={1990},
  publisher={SIAM}
}

@article{sherali1992global,

 Author = {Sherali, Hanif D. and Tuncbilek, Cihan H.},
 Title = {A global optimization algorithm for polynomial programming problems using a reformulation-linearization technique},
 Journal = {Journal of Global Optimization},
 Volume = {2},
 Number = {1},
 Pages = {101--112},
 Year = {1992},
  publisher={Springer}
}

@article{sherali1997new, 
  title={New reformulation linearization/convexification relaxations for univariate and multivariate polynomial programming problems},
  author={Sherali, Hanif D and Tuncbilek, Cihan H},
  journal={Operations Research Letters},
  volume={21},
  number={1},
  pages={1--9},
  year={1997},
  publisher={Elsevier}
}

@article{sherali2012reduced,
  title={Reduced RLT representations for nonconvex polynomial programming problems},
  author={Sherali, Hanif D and Dalkiran, Evrim and Liberti, Leo},
  journal={Journal of Global Optimization},
  volume={52},
  pages={447--469},
  year={2012},
  publisher={Springer}
}

@article{tawarmalani2013explicit,
  title={Explicit convex and concave envelopes through polyhedral subdivisions},
  author={Tawarmalani, Mohit and Richard, Jean-Philippe P and Xiong, Chuanhui},
  journal={Mathematical Programming},
  volume={138},
  number={1},
  pages={531--577},
  year={2013},
  publisher={Springer}
}

@book{wolsey1999integer,
  title={Integer and combinatorial optimization},
  author={Wolsey, Laurence A and Nemhauser, George L},
  year={1999},
  publisher={John Wiley \& Sons}
}

@article{meyer2005convex,
  title={Convex envelopes for edge-concave functions},
  author={Meyer, Clifford A and Floudas, Christodoulos A},
  journal={Mathematical programming},
  volume={103},
  pages={207--224},
  year={2005},
  publisher={Springer}
}

@article{rikun1997convex,
  title={A convex envelope formula for multilinear functions},
  author={Rikun, Anatoliy D},
  journal={Journal of Global Optimization},
  volume={10},
  number={4},
  pages={425--437},
  year={1997},
  publisher={Springer}
}

@article{bienstock2018lp,
  title={LP formulations for polynomial optimization problems},
  author={Bienstock, Daniel and Munoz, Gonzalo},
  journal={SIAM Journal on Optimization},
  volume={28},
  number={2},
  pages={1121--1150},
  year={2018},
  publisher={SIAM}
}

@article{laurent2014handelman,
  title={Handelman’s hierarchy for the maximum stable set problem},
  author={Laurent, Monique and Sun, Zhao},
  journal={Journal of Global Optimization},
  volume={60},
  pages={393--423},
  year={2014},
  publisher={Springer}
}

@article{laurent2023effective,
  title={An effective version of Schm{\"u}dgen’s Positivstellensatz for the hypercube},
  author={Laurent, Monique and Slot, Lucas},
  journal={Optimization Letters},
  volume={17},
  number={3},
  pages={515--530},
  year={2023},
  publisher={Springer}
}

@book{deza1997geometry,
  title={Geometry of cuts and metrics},
  author={Deza, Michel and Laurent, Monique and Weismantel, Robert},
  volume={2},
  year={1997},
  publisher={Springer}
}

@article{atamturk2022submodular,
  title={Submodular function minimization and polarity},
  author={Atamt{\"u}rk, Alper and Narayanan, Vishnu},
  journal={Mathematical Programming},
  pages={1--11},
  year={2022},
  publisher={Springer}
}

@article{bach2019submodular,
  title={Submodular functions: from discrete to continuous domains},
  author={Bach, Francis},
  journal={Mathematical Programming},
  volume={175},
  pages={419--459},
  year={2019},
  publisher={Springer}
}

@article{parrilo2003semidefinite,
 Author = {Parrilo, Pablo A.},
 Title = {Semidefinite programming relaxations for semialgebraic problems},
 Journal = {Mathematical Programming},
 Volume = {96},
 Number = {2 (B)},
 Pages = {293--320},
 Year = {2003},
}

@article{dressler2022optimization,
  title={Optimization over the Boolean hypercube via sums of nonnegative circuit polynomials},
  author={Dressler, Mareike and Kurpisz, Adam and De Wolff, Timo},
Journal = {Foundations of Computational Mathematics},
 Volume = {22},
 Number = {2},
 Pages = {365--387},
 Year = {2022}
}

@article{lovasz1991cones,
  title={Cones of matrices and set-functions and 0--1 optimization},
  author={Lov{\'a}sz, L{\'a}szl{\'o} and Schrijver, Alexander},
  journal={SIAM Journal on Optimization},
  volume={1},
  number={2},
  pages={166--190},
  year={1991},
  publisher={SIAM}
}

@article{slot2023sum,
  title={Sum-of-squares hierarchies for binary polynomial optimization},
  author={Slot, Lucas and Laurent, Monique},
  journal={Mathematical Programming},
  volume={197},
  number={2},
  pages={621--660},
  year={2023},
  publisher={Springer}
}

@incollection{chlamtac2012convex,
  author = {E.~Chlamtac and M.~Tulsiani},
  title = {Convex Relaxations and Integrality Gaps},
  editor = {M.~Anjos and J.~Lasserre},
  booktitle = {Handbook on Semidefinite, Conic and Polynomial Optimization},
  series = {ISOR}, 
  volume = {166},
  pages = {139-169},
  publisher = {Springer},
  address = {New York, NY},
  year = {2012}
}

@article{gouveia2010theta,
  title={Theta bodies for polynomial ideals},
  author={Gouveia, Joao and Parrilo, Pablo A and Thomas, Rekha R},
  journal={SIAM Journal on Optimization},
  volume={20},
  number={4},
  pages={2097--2118},
  year={2010},
  publisher={SIAM}
}

@article{dressler2017positivstellensatz,
  title={A positivstellensatz for sums of nonnegative circuit polynomials},
  author={Dressler, Mareike and Iliman, Sadik and De Wolff, Timo},
  journal={SIAM Journal on Applied Algebra and Geometry},
  volume={1},
  number={1},
  pages={536--555},
  year={2017},
  publisher={SIAM}
}

@article{ghasemi2012lower,
  title={Lower bounds for polynomials using geometric programming},
  author={Ghasemi, Mehdi and Marshall, Murray},
  journal={SIAM Journal on Optimization},
  volume={22},
  number={2},
  pages={460--473},
  year={2012},
  publisher={SIAM}
}

@book{conforti2014integer,
author="Conforti, Michele
and Cornu{\'e}jols, G{\'e}rard
and Zambelli, Giacomo",
 Title = {Integer Programming},
 Series = {Graduate Texts in Mathematics},
 Volume = {271},
 Year = {2014},
 Publisher = {Cham: Springer},
 Language = {English}
}

@article{billionnet1985maximizing,
  title={Maximizing a supermodular pseudoboolean function: A polynomial algorithm for supermodular cubic functions},
  author={Billionnet, Alain and Minoux, Michel},
  journal={Discrete Applied Mathematics},
  volume={12},
  number={1},
  pages={1--11},
  year={1985},
  publisher={Elsevier}
}

@article{orlin2009faster,
  title={A faster strongly polynomial time algorithm for submodular function minimization},
  author={Orlin, James B},
  journal={Mathematical Programming},
  volume={118},
  number={2},
  pages={237--251},
  year={2009},
  publisher={Springer}
}

@article{putinar1993positive,
  title={Positive polynomials on compact semi-algebraic sets},
  author={Putinar, Mihai},
  journal={Indiana University Mathematics
Journal},
  volume={42},
  number={3},
  pages={969--984},
  year={1993},
  publisher={JSTOR}
}

@article{goemans1995improved,
  title={Improved approximation algorithms for maximum cut and satisfiability problems using semidefinite programming},
  author={Goemans, Michel X and Williamson, David P},
  journal={Journal of the ACM},
  volume={42},
  number={6},
  pages={1115--1145},
  year={1995},
  publisher={ACM New York, NY, USA}
}

@article{laurent2009sums,
  title={Sums of squares, moment matrices and optimization over polynomials},
  author={Laurent, Monique},
  journal={Emerging applications of algebraic geometry},
  pages={157--270},
  year={2009},
  publisher={Springer}
}

@article{sakaue2017exact,
  title={Exact semidefinite programming relaxations with truncated moment matrix for binary polynomial optimization problems},
  author={Sakaue, Shinsaku and Takeda, Akiko and Kim, Sunyoung and Ito, Naoki},
  journal={SIAM Journal on Optimization},
  volume={27},
  number={1},
  pages={565--582},
  year={2017},
  publisher={SIAM}
}

@article{xu2023,
  title={Submodular maximization and its generalization through an intersection cut lens},
  author={Xu, Liding and Liberti, Leo},
  journal={Mathematical Programming},
  volume = {},
  number = {},
  pages={},
  doi = {10.1007/s10107-024-02059-2},
  year={2024},
  publisher={Springer}
}

@inproceedings{lee2015lower,
author = {Lee, James R. and Raghavendra, Prasad and Steurer, David},
title = {Lower Bounds on the Size of Semidefinite Programming Relaxations},
year = {2015},
publisher = {ACM},
address = {New York, NY},
pages = {567–576},
numpages = {10},
booktitle = {Proceedings of STOC 2015}
}

@techreport{nguyen2013deriving,
  title={Deriving the convex hull of a polynomial partitioning set through lifting and projection},
  author={Nguyen, Trang T and Richard, Jean-Philippe P and Tawarmalani, Mohit},
  year={2013},
  institution={Technical report, working paper}
}

@article{gallo1989supermodular,
  title={On the supermodular knapsack problem},
  author={Gallo, Giorgio and Simeone, Bruno},
  journal={Mathematical Programming},
  Number = {2 (B)},
  volume={45},
  pages={295--309},
  year={1989},
  publisher={Springer}
}

@phdthesis{hansen1974lin,
  title={Programmes math{\'e}matiques en variables 0-1},
  author={Hansen, Pierre},
  year={1974},
  school={Universit{\'e} libre de Bruxelles, Facult{\'e} des sciences appliqu{\'e}es}
}

@inproceedings{orlin2013max,
author = {Orlin, James B.},
title = {{Max flows in O(nm) time, or better}},
year = {2013},
publisher = {ACM},
address = {New York, NY},
booktitle = {Proceedings of STOC 2013},
pages = {765–774},
numpages = {10},
location = {Palo Alto, California, USA}
}

@article{glover1973further,
  title={Further reduction of zero-one polynomial programming problems to zero-one linear programming problems},
  author={Glover, Fred and Woolsey, Eugene},
  journal={Operations Research},
  volume={21},
  number={1},
  pages={156--161},
  year={1973},
  publisher={INFORMS}
}

@article{lasserre2002explicit,
  title={An explicit equivalent positive semidefinite program for nonlinear 0-1 programs},
  author={Lasserre, Jean B},
  journal={SIAM Journal on Optimization},
  volume={12},
  number={3},
  pages={756--769},
  year={2002},
  publisher={SIAM}
}

@article{lasserre2002polynomials,
  title={Polynomials nonnegative on a grid and discrete optimization},
  author={Lasserre, Jean B},
  journal={Transactions of the American Mathematical Society},
  volume={354},
  number={2},
  pages={631--649},
  year={2002}
}

@article{bienstock2004subset,
  title={Subset algebra lift operators for 0-1 integer programming},
  author={Bienstock, Daniel and Zuckerberg, Mark},
  journal={SIAM Journal on Optimization},
  volume={15},
  number={1},
  pages={63--95},
  year={2004},
  publisher={SIAM}
}

@article{balas1993lift,
  title={A lift-and-project cutting plane algorithm for mixed 0--1 programs},
  author={Balas, Egon and Ceria, Sebasti{\'a}n and Cornu{\'e}jols, G{\'e}rard},
  journal={Mathematical Programming},
  volume={58},
  Number = {3 (A)},
  pages={295--324},
  year={1993},
  publisher={Springer}
}

@article{handelman1988representing,
  title={Representing polynomials by positive linear functions on compact convex polyhedra},
  author={Handelman, David},
  journal={Pacific Journal of Mathematics},
  volume={132},
  number={1},
  pages={35--62},
  year={1988},
  publisher={Mathematical Sciences Publishers}
}

@article{boros2002pseudo,
  title={Pseudo-boolean optimization},
  author={Boros, Endre and Hammer, Peter L},
  journal={Discrete Applied mathematics},
  volume={123},
  number={1-3},
  pages={155--225},
  year={2002},
  publisher={Elsevier}
}

@book{de2012algebraic,
 Author = {De Loera, Jes{\'u}s A. and Hemmecke, Raymond and K{\"o}ppe, Matthias},
 Title = {Algebraic and geometric ideas in the theory of discrete optimization},
 Volume = {14},
 Year = {2013},
Series = {MOS/SIAM Series on Optimization},
 Publisher = {Philadelphia, PA: SIAM}
}

@book{lasserre2015introduction,
 Author = {Lasserre, Jean Bernard},
 Title = {An introduction to polynomial and semi-algebraic optimization},
 Series = {Cambridge Texts in Applied Mathematics},
 Year = {2015},
 Publisher = {Cambridge: Cambridge University Press},
 Language = {English}
}

@book{parrilo2020sum,
 Editor = {Parrilo, Pablo A. and Thomas, Rekha R.},
 Title = {Sum of squares: theory and applications. {AMS} short course, {Baltimore}, {MD}, {USA}, {January} 14--15, 2019},
 Series = {Proceedings of Symposia in Applied Mathematics},
 Volume = {77},
 Year = {2020},
 Publisher = {Providence, RI: AMS},
}

@book{crama2011boolean,
 author = {Crama, Yves and Hammer, Peter L.},
 Title = {Boolean functions. {Theory}, algorithms, and applications},
 Volume = {142},
 Year = {2011},
 Publisher = {Cambridge: Cambridge University Press},
Series = {Encyclopedia of Mathematics and Its Applications}
}

@book{magron2023sparse,
 Author = {Magron, Victor and Wang, Jie},
 Title = {Sparse polynomial optimization. {Theory} and practice},
 Series = {Series on Optimization and its Applications},
 Volume = {5},
 Year = {2023},
 Publisher = {Singapore: World Scientific},
}

@article{waki2006sums,
  title={Sums of squares and semidefinite program relaxations for polynomial optimization problems with structured sparsity},
  author={Waki, Hayato and Kim, Sunyoung and Kojima, Masakazu and Muramatsu, Masakazu},
  journal={SIAM Journal on Optimization},
  volume={17},
  number={1},
  pages={218--242},
  year={2006},
  publisher={SIAM}
}

@article{lasserre2006convergent,
  title={Convergent {SDP}-relaxations in polynomial optimization with sparsity},
  author={Lasserre, Jean B},
  journal={SIAM Journal on Optimization},
  volume={17},
  number={3},
  pages={822--843},
  year={2006},
  publisher={SIAM}
}

@article{grimm2007note,
  title={A note on the representation of positive polynomials with structured sparsity},
  author={Grimm, David and Netzer, Tim and Schweighofer, Markus},
  journal={Archiv der Mathematik},
  volume={89},
  number={5},
  pages={399--403},
  year={2007},
  publisher={Springer}
}

@book{murota1998discrete,
 Author = {Murota, Kazuo},
 Title = {Discrete convex analysis},
 Series = {SIAM Monographs on Discrete Mathematics and Applications},
 Volume = {10},
 Year = {2003},
 Publisher = {Philadelphia, PA: SIAM}
}

@article{crama1989recognition,
  title={Recognition problems for special classes of polynomials in 0--1 variables},
  author={Crama, Yves},
  journal={Mathematical Programming},
  volume={44},
  pages={139--155},
  year={1989},
  publisher={Springer}
}

@article{picard1982network,
  title={A network flow solution to some nonlinear 0-1 programming problems, with applications to graph theory},
  author={Picard, Jean-Claude and Queyranne, Maurice},
  journal={Networks},
  volume={12},
  number={2},
  pages={141--159},
  year={1982},
  publisher={Wiley Online Library}
}

@incollection{lovasz1983submodular,
author="Lov{\'a}sz, L.",
editor="Bachem, Achim
and Korte, Bernhard
and Gr{\"o}tschel, Martin",
title="Submodular functions and convexity",
bookTitle="Mathematical Programming The State of the Art: Bonn 1982",
year="1983",
publisher="Springer Berlin Heidelberg",
address="Berlin, Heidelberg",
pages="235--257"
}

@article{krivine1964anneaux,
  title={Anneaux pr{\'e}ordonn{\'e}s},
 Author = {Krivine, J. L.},
 Journal = {Journal d’Analyse Mathm{\'e}atique},
 Volume = {12},
 Pages = {307--326},
 Year = {1964}
}


\end{document}